\def\Xint#1{\mathchoice
{\XXint\displaystyle\textstyle{#1}}%
{\XXint\textstyle\scriptstyle{#1}}%
{\XXint\scriptstyle\scriptscriptstyle{#1}}%
{\XXint\scriptscriptstyle\scriptscriptstyle{#1}}%
\!\int}
\def\XXint#1#2#3{{\setbox0=\hbox{$#1{#2#3}{\int}$ }
\vcenter{\hbox{$#2#3$ }}\kern-.6\wd0}}
\def\dashint{\Xint-}
\newcommand\blfootnote[1]{%
  \begingroup
  \renewcommand\thefootnote{}\footnote{#1}%
  \addtocounter{footnote}{-1}%
  \endgroup
}
\newtheorem{theorem}{Theorem}[section]
\newtheorem{proposition}{Proposition}[section]
\newtheorem{corollary}[proposition]{Corollary}
\newtheorem{definition}[proposition]{Definition}
\newtheorem{lemma}[proposition]{Lemma}
\newtheorem{remark}[proposition]{Remark}
\newcommand{\diver}{\operatorname{div}}
\newcommand{\dd}{\mathrm{d}}
\newcommand{\osc}{\operatorname{osc}}
\numberwithin{equation}{section}
\begin{document}
\title[Fractional $(p,q)$-Laplacian]{Local behaviour and existence of solutions of the fractional $(p,q)$-Laplacian}
	
\author{Emerson Abreu}
\address{E. Abreu and A. H. S. Medeiros - Departmento de Matem\'atica, Universidade Federal de  Minas Gerais, 31270-901 - Belo Horizonte - MG, Brazil}
\email{\begin{align*} eabreu@ufmg.br&\\   
aldomedeiros@ufmg.br&\end{align*}}
\author{A. H. Souza Medeiros}

\date{}

\keywords{fractional $(p,q)$-laplacian, weak solutions, H\"{o}lder regularity}
 \subjclass[2010]{35D10, 35R11, 35J20} 
 
\begin{abstract}
In this paper, we consider the regularity of weak solutions (in an appropriate space) to the elliptic partial differential equation
\begin{equation*}
(-\Delta_{p})^{s} u + (-\Delta_{q})^{s} u = f(x) \quad \text{in} \quad \mathbb{R}^{N},
\end{equation*}
where $0<s<1$ and $ 2 \leq q \leq p < N/s$. We prove that these solutions are
 locally in $C^{0,\alpha}(\mathbb{R}^N)$, which seems to be optimal. Furthermore, we prove the existence of solutions to the problem
\begin{equation*}
(-\Delta_{p})^{s} u + (-\Delta_{q})^{s} u = \vert u \vert^{p^{*}_{s}-2}u + \lambda g(x) \vert u \vert^{r-2}u \,\,\, \text{in} \,\,\,\, \mathbb{R}^{N},
\end{equation*}
where $1 < q\leq p < N/s$, $\lambda$ is a parameter and  $g$ satisfies some  conditions of integrability. We also show that, if $g$ is bounded, then the solutions are continuous and bounded.
\end{abstract}

	\maketitle
\section{Introduction}

\blfootnote{This study was financed by the Conselho Nacional de Desenvolvimento Cient\'{\i}fico e Tecnol\'ogico - Brasil (CNPq) and Funda\c c\~ao de Apoio \`a Pesquisa do Estado de Minas Gerais - (FAPEMIG)}
We investigate the existence and regularity of weak solutions of the $(p,q)$-Laplacian problem 
\begin{equation}\label{P_1}\left\{
\begin{array}{rcll}
(- \Delta_{p})^{s}u + (- \Delta_{q})^{s}u =f \quad \mbox{in} \quad \mathbb{R}^{N}\vspace*{.1cm} \\ 
u(x) \geq 0 , \quad x \in \mathbb{R}^{N},
\end{array} \right.
\end{equation}
where $0 < s < 1$, $N > sp$, $p^{*}_{s} = \frac{Np}{N-sp}$, $2 < q \leq p < \infty$  and  $f \in L^{\frac{p^{*}_{s}}{p^{*}_s-1}}(\mathbb{R}^{N}) \cap L^{\theta}(\mathbb{R}^{N})$, with $\theta > \frac{N}{sp}$. The hypothesis $f\in L^{\frac{p^{*}_{s}}{p^{*}_s-1}}(\mathbb{R}^{N})$ ensure that we can apply variational methods, and the condition $f \in L^{\theta}(\mathbb{R}^{N})$ is necessary to apply the Moser iteration process to obtain a bound in $L^\infty$-norm for a solution. 
	
For any $1\leq m < \infty$, the fractional $m$-Laplacian operator, under suitable smoothness condition on $\phi$, can be written as
\begin{equation}\label{nonlocal}
(-\Delta_{m})^{s} \phi (x) = 2 \displaystyle\lim_{\varepsilon \rightarrow 0} \displaystyle\int_{\mathbb{R}^{N} \setminus B_{\varepsilon}(x)} \frac{\vert \phi(x) - \phi(y)\vert^{m-2}(\phi(x) - \phi(y))}{\vert x - y\vert^{N+ s m}} \dd y, \ \ \ \forall x \in \mathbb{R}^{N},
\end{equation}
where $B_{\varepsilon}(x) := \{y \in \mathbb{R}^{N}; \vert y-x\vert < \varepsilon \}$, see \cite{Franzina,Palatucci,Pucci}.
	
We point out that there are several notions of the fractional Laplacian operator in the current literature, all of which agree when the problems are set on the whole $\mathbb{R}^N$. However, some of them disagree in a bounded domain.
	
Recently, a great deal of attention has been focused on studying problems involving fractional operators, from a pure mathematical point of view and for applications as well, since this kind of problem naturally arise in many different contexts, such as the thin obstacle problem, finance, phase transitions, stratified materials, optimization, anomalous diffusion, semipermeable membranes, minimal surfaces, among others. For more details, see \cite{Caffarelli,Caisheng,Palatucci,Yang}. 
	
The regularity up to the boundary of fractional problems like \eqref{P_1} in the case $p=q=2$ is now rather well understood, even when more general kernels and nonlinearities are considered. Using a viscosity solution approach, the linear model case gives regularity for fully non-linear equations which are ``uniformly elliptic” in a suitable sense. Regarding the viscosity approach to fully non-linear, elliptic non-local equation. See \cite{Caffar} for interior regularity theory with smooth kernels.
	
In the case $p,q \neq 2$, problem \eqref{P_1} is both non-local and non-linear. Its leading operator $(-\Delta_{p})^{s}$ is furthermore degenerate when $p>2$. Determining sufficiently good regularity estimates up to the boundary is not only relevant by itself, but it also has useful applications in obtaining multiplicity results for more general non-linear and non-local equations, such as those investigated by Ianizzotto, Liu, Perera and Squassina \cite{Liu} in the framework of topological methods and Morse theory. 
	
The $C^{0,\alpha}$-regularity of weak solutions of the degenerate elliptic problem
\begin{equation}
\left\{
\begin{array}{rcll}
(-\Delta_{p})^{s} u = f \quad \text{in} \quad \Omega,\vspace*{.1cm} \\ 
u = 0 , \quad \text{in} \quad \Omega^{c},  \end{array} \right.
\end{equation}
when  $1 < p < \infty$ was proved by Ianizzotto, Mosconi and Squassina \cite{Ian}, for a bounded domain $\Omega \subset \mathbb{R}^{N}$ with $C^{1,1}$-boundary and $f \in L^{\infty}(\Omega)$.
	
When $s=1$, \eqref{P_1} becomes a $(p,q)$-Laplacian problem of the form
\begin{equation}
(-\Delta_{p}) u + (-\Delta_{q}) u= f(x), \quad x \in \mathbb{R}^N, \label{PL}
\end{equation}
which has its origin in the general reaction-diffusion problem
\begin{equation}
u_t = \diver(D(u)\nabla u) + f(x,u), \quad x \in \mathbb{R}^N, \ \ t>0, \label{Difusao}
\end{equation}
where $D(u) = \vert \nabla u \vert^{p-2} + \vert \nabla u \vert^{q-2}$.
	
For a general term $D(u)$, problem \eqref{Difusao} has a wide range of applications in physics and related sciences such as biophysics, plasma physics, and chemical reaction design. In such applications, the function $u$ describes a concentration, and the first term on the right-hand side of \eqref{Difusao} corresponds to a diffusion with a diffusion coefficient $D(u)$; the term $f(x,u)$ stands for the reaction, related to sources and energy-loss processes. Typically, in chemical and biological applications, the reaction term $f(x,u)$ is a polynomial in $u$ with variable coefficients (see \cite{Figueiredo,Liang,Yang}).
	
The regularity of solutions of \eqref{PL} has been studied by He and Li \cite{Gongbao}. The authors showed that the weak solutions are locally $C^{1,\alpha}$.

The first difficulty found in problem \eqref{P_1} was how to define a weak solution. We address this question in the present paper. For this purpose, we usually consider the reflexive Banach space
\begin{equation*}
\mathcal{W}:= D^{s,p}(\mathbb{R}^N) \cap D^{s,q}(\mathbb{R}^N),
\end{equation*}
where $D^{s,m}(\mathbb{R}^N ) = \{u \in L^{m_{s}^*}(\mathbb{R}^N) ; \Vert u \Vert_{ s,m} < \infty \}$  and $\Vert u \Vert_{s,m}$ denotes the Gagliardo-norm 
\begin{equation*}
\Vert u \Vert_{s,m} = \left( \displaystyle\int_{ \mathbb{R}^N}\displaystyle\int_{ \mathbb{R}^N} \frac{ \vert u(x)-u(y) \vert^{m}}{\vert x-y \vert^{N+ s m}} \dd x\dd y \right)^{\frac{1}{m}}
\end{equation*}
for all $u \in D^{s,m}(\mathbb{R}^N)$. See \cite{Brasco} for  details.
	
The non-homogeneity of the operator $(-\Delta_{p})^{s} + (-\Delta_{q})^{s}$ introduces technical difficulties in obtaining and regularity of weak solutions for problems involving this operator. It is worth to mention that $\|\cdot\|_{s,m}$ is a norm in $D^{s,m}(\mathbb{R}^{N})$, but not in $W^{s,m}(\mathbb{R}^{N})$. \\
	
Our first and main result is concerned with local regularity of weak solutions of problem \eqref{P_1}:
\begin{theorem}\label{BOUNDED}
Let $\theta > \frac{N}{sp}$, $f \in L^{\frac{p^{*}_{s}}{p^{*}_s-1}}(\mathbb{R}^{N}) \cap L^{\theta}(\mathbb{R}^{N})$ and $u \in \mathcal{W}$ a solution of \eqref{P_1}. Then $u \in L^{\infty}(\mathbb{R}^{N})$. 
		
Moreover, if $f \in L_{loc}^{\infty}(\mathbb{R}^{N})$, then $u$ is locally H\"{o}lder continuous with exponent $\alpha$, that is, $u\in C_{loc}^{\alpha}(\mathbb{R}^{N})$ 
with $\alpha \in \left(0,\frac{s(p-q)}{p-1}\right)$.
\end{theorem}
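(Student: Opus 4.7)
The theorem splits naturally into two independent statements: a global $L^{\infty}$ bound (the first assertion) and a local H\"older estimate (the second). The plan is to handle them by two distinct nonlinear techniques, each adapted to the non-homogeneity of the operator $(-\Delta_{p})^{s} + (-\Delta_{q})^{s}$.

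\textbf{Part 1: boundedness via Moser iteration.} For the first assertion the plan is a Moser iteration in the fractional setting. Given $\beta \geq 1$ and $k > 0$, set $u_{k} := \min(u_{+},k)$ and test \eqref{P_1} with $\varphi = u\,u_{k}^{p(\beta-1)}$; this is admissible in $\mathcal{W}$. On the $p$-side I would invoke the standard algebraic inequality
\begin{equation*}
\bigl(|a|^{p-2}a - |b|^{p-2}b\bigr)\bigl(G(a) - G(b)\bigr) \geq c_{p}\,\bigl|H(a) - H(b)\bigr|^{p},
\end{equation*}
valid for any increasing $G$ and the associated function $H$ with $H' = (G')^{1/p}$, whereas the $q$-contribution is non-negative and can simply be discarded from the left-hand side. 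Combining this with the fractional Sobolev embedding $D^{s,p}(\mathbb{R}^{N})\hookrightarrow L^{p^{*}_{s}}(\mathbb{R}^{N})$ and a H\"older estimate of $\int f\varphi\,\dd x$ -- which is where the hypothesis $\theta > N/(sp)$ is used -- I obtain a reverse-H\"older inequality for $u_{k}^{\beta}$. Iterating along $\beta_{n} = (p^{*}_{s}/p)^{n}$ and letting $k\to\infty$ then yields the global bound $u \in L^{\infty}(\mathbb{R}^{N})$.

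\textbf{Part 2: local H\"older regularity.} Under the extra assumption $f \in L^{\infty}_{loc}(\mathbb{R}^{N})$, the plan is to view \eqref{P_1} as a fractional $p$-Laplace equation with perturbed right-hand side,
\begin{equation*}
(-\Delta_{p})^{s} u = f - (-\Delta_{q})^{s} u,
\end{equation*}
and to invoke the local $C^{0,\alpha}$-estimate of Ianizzotto--Mosconi--Squassina \cite{Ian} for the fractional $p$-Laplacian with a locally bounded right-hand side. The technical core is to control $(-\Delta_{q})^{s} u$ locally in $L^{\infty}$. Using $u \in L^{\infty}(\mathbb{R}^{N})$ from Part 1, the tail of its defining integral at scale $r$ is dominated by $C\|u\|_{L^{\infty}}^{q-1}\,r^{-sq}$; the near-diagonal part requires an a priori amount of local H\"older continuity of $u$, produced by a first application of \cite{Ian} with a weaker right-hand side understood in a suitable dual space.

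\textbf{Main obstacle.} A bootstrap then upgrades the H\"older exponent step by step, and the restriction $\alpha \in \left(0,\, s(p-q)/(p-1)\right)$ arises as the scaling/matching threshold below which the H\"older control of $(-\Delta_{q})^{s}u$ stays strictly subordinate to that of $(-\Delta_{p})^{s}u$, so that the bootstrap closes. Verifying this matching quantitatively, with constants uniform under rescaling -- and hence confirming that the full range $\alpha < s(p-q)/(p-1)$ is attained -- is where the bulk of the work sits and is the main obstacle of the proof.
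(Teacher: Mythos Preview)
Your Part~1 is essentially the paper's approach: a Moser iteration using the algebraic inequality of Lemma~\ref{Le}, discarding the non-negative $q$-contribution on the left, and exploiting $f\in L^{\theta}$ with $\theta>N/(sp)$ to close the iteration. The only cosmetic differences are the choice of truncation ($u_{M}^{\beta}$ versus $u\,u_{k}^{p(\beta-1)}$) and the precise recursion for the exponents.

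Part~2, however, follows a genuinely different route and contains a real gap. The paper does \emph{not} treat $(-\Delta_{q})^{s}u$ as a lower-order perturbation; instead it proves a weak Harnack inequality (Theorem~\ref{DH1} and Lemma~\ref{DH2}) directly for the combined operator $(-\Delta_{p})^{s}+(-\Delta_{q})^{s}$ via a barrier argument and the comparison principle, and then runs a dyadic oscillation-decay scheme (Theorem~\ref{oscil}). The scaling mismatch between the two terms produces a factor $R^{s(p-q)}$ in the error, and this is exactly where the restriction $\alpha<s(p-q)/(p-1)$ emerges, with no bootstrap involved.

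Your perturbative scheme is circular as written. To invoke \cite{Ian} on $(-\Delta_{p})^{s}u=f-(-\Delta_{q})^{s}u$ you need the right-hand side in $L^{\infty}_{loc}$, but for $u$ merely bounded (or even $C^{0,\alpha_{0}}$ with small $\alpha_{0}$) the quantity $(-\Delta_{q})^{s}u$ is only a distribution: the near-diagonal integral $\int_{B_{r}(x)}|u(x)-u(y)|^{q-1}|x-y|^{-N-sq}\,\dd y$ diverges unless $\alpha_{0}(q-1)>sq$, i.e.\ $\alpha_{0}>sq/(q-1)\geq s$, which is far above the small H\"older exponent that any first pass would yield. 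The pointwise bound you would actually need is Proposition~\ref{Prop1}, and that requires $u\in C^{1,\gamma}_{loc}$, strictly more than what the theorem asserts. Your proposed first step---applying \cite{Ian} ``with a weaker right-hand side understood in a suitable dual space''---is not a result available in that paper (their argument uses $f\in L^{\infty}$ throughout the barrier and tail estimates), so the bootstrap never gets off the ground. The paper circumvents this entirely by keeping both operators together in the Harnack/oscillation machinery.
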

	
The additional condition $f \in L_{loc}^{\infty}(\mathbb{R}^{N})$ in the above theorem is used to control the oscillations of $u$ in a ball.
	
To prove the global boundedness of the solution $u$, in Section \ref{bound} we use the Moser iteration process. The continuity of the solution $u$ is obtained by adapting the arguments used by Ianizzotto, Mosconi and Squassina \cite{Ian} and Serrin \cite{Serrin}, which was done in Section \ref{SEC}. The main idea is to control the oscillation of the function $u$ in any ball. In order to do so, we prove a Harnack type inequality for weak solutions of \eqref{P_1} instead of viscosity solutions, since we consider that the variational setting is more natural to the problem. However, barrier type arguments are frequently used in our approach. Since this kind of argument is not valid if $1<p<2$, our proof only applies if $2\leq q,p<\infty$. \\
	
We will also study existence and regularity of weak solutions for the problem involving the fractional critical $p^{*}_{s}$-exponent. 
\begin{equation}\label{EQ}\left\{
\begin{array}{rcll}
(- \Delta_{p})^{s}u + (-\Delta_{q})^{s}u =\vert u \vert^{p^{*}_{s}-2}u + \lambda g\vert u \vert^{r-2}u\, \quad \mbox{in} \quad \mathbb{R}^{N} \\ 
u(x) \geq 0 , \quad  x \in \mathbb{R}^{N}  \end{array} \right.
\end{equation}
where $0 < s < 1$, $N > sp$, $p^{*}_{s} = \frac{Np}{N-sp}$, $1 < q \leq p < \infty$ and $g$ satisfies the following integrability conditions:
	
\begin{enumerate}
\item[$(g_1)$] $g$ is integrable and $g \in L^{t_s}( \mathbb{R}^N )$, with $t_s = \frac{p^{*}_{s}}{p^{*}_{ s} - r}$;
\item[$(g_2)$] There exist an open set $\Omega_{g} \in \mathbb{R}^N$ and $\alpha_0 >0$ such that $g(x) \geq \alpha_0 > 0$, for all $x \in \Omega_{g}$.
\end{enumerate}

Considering problem \eqref{EQ}, we prove:
\begin{theorem}\label{EXISTENCE}
Assume that $g:\mathbb{R}^N \rightarrow \mathbb{R}$ satisfies the conditions $(g_1)$ and $(g_2)$, for $0<s<1$. 
\begin{enumerate}		
\item [$(i)$] If $1< q \leq p <r < p_{s}^{*}$, then there exists $\lambda^* >0$ such that, for any $\lambda > \lambda^*$, problem \eqref{EQ} has at least one nontrivial and nonnegative weak solution in $\mathcal{W}$.
		
\item [$(ii)$] If $1<q<\displaystyle\frac{N(p-1)}{N-s} < p \leq \max\left\{p,p_{s}^* - \frac{q}{p-1} \right\} < r < p_{s}^* $ and $ N>p^2s $, then the problem \eqref{EQ} has a non-trivial weak solution in $\mathcal{W}$ for any $\lambda > 0$.
\end{enumerate}
\end{theorem}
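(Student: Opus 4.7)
The plan is to apply the mountain pass theorem to the Euler--Lagrange functional
\begin{equation*}
I_\lambda(u)=\frac{1}{p}\|u\|_{s,p}^{p}+\frac{1}{q}\|u\|_{s,q}^{q}-\frac{1}{p_s^{*}}\int_{\mathbb{R}^N}(u^{+})^{p_s^{*}}\dd x-\frac{\lambda}{r}\int_{\mathbb{R}^N}g(x)(u^{+})^{r}\dd x
\end{equation*}
defined on $\mathcal{W}$ with the natural norm $\|u\|=\|u\|_{s,p}+\|u\|_{s,q}$. Truncating the nonlinearities by $u^{+}$ forces any critical point to be nonnegative (test with $u^{-}$ and use the monotonicity of the fractional $p$- and $q$-Laplacians). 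Hypothesis $(g_{1})$ together with H\"older and the embedding $D^{s,p}(\mathbb{R}^N)\hookrightarrow L^{p_s^{*}}(\mathbb{R}^N)$ yields $\int g|u|^{r}\leq\|g\|_{L^{t_s}}\|u\|_{L^{p_s^{*}}}^{r}\leq C\|u\|_{s,p}^{r}$, so $I_\lambda$ is of class $C^{1}$ on $\mathcal{W}$.

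Next I would check the mountain pass geometry. From the bound above,
\begin{equation*}
I_\lambda(u)\geq \frac{1}{p}\|u\|_{s,p}^{p}+\frac{1}{q}\|u\|_{s,q}^{q}-C\|u\|_{s,p}^{p_s^{*}}-C\lambda\|u\|_{s,p}^{r},
\end{equation*}
which produces a local minimum at the origin since $p,q<p_s^{*}$ and $r>p$. Taking a positive $v\in C_{c}^{\infty}(\Omega_{g})$ and using the dominance of the critical term shows $I_\lambda(tv)\to -\infty$ as $t\to\infty$, so the mountain pass level $c_\lambda$ is well defined and positive. A standard concentration--compactness argument in the spirit of Mosconi--Squassina then gives $(PS)_{c}$ for every
\begin{equation*}
c<c^{*}:=\frac{s}{N}\,S^{N/(sp)},
\end{equation*}
where $S$ is the sharp constant in the fractional $p$-Sobolev inequality; thus it suffices to place $c_\lambda$ strictly below~$c^{*}$.

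For item (i), I fix a nonnegative $v\in C_{c}^{\infty}(\Omega_{g})$ and maximize $t\mapsto I_\lambda(tv)$. Since $r>p\geq q$, the maximum is attained at some $t_{\lambda}>0$, and using $(g_{2})$ to bound $\int g v^{r}\geq\alpha_{0}\|v\|_{L^{r}(\Omega_{g})}^{r}>0$ gives
\begin{equation*}
\max_{t\geq 0}I_\lambda(tv)\leq C(v)\bigl(\lambda^{-p/(r-p)}+\lambda^{-q/(r-q)}\bigr)\longrightarrow 0 \quad\text{as } \lambda\to\infty,
\end{equation*}
hence $c_\lambda<c^{*}$ for every $\lambda>\lambda^{*}$ sufficiently large. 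For item (ii) the parameter $\lambda$ plays no role, so I would instead plug in a truncated rescaling $U_{\varepsilon}$ of the extremal of the fractional $p$-Sobolev inequality concentrated at a point of $\Omega_{g}$, and use the asymptotics of Brasco--Mosconi--Squassina to expand
\begin{equation*}
\max_{t\geq 0}I_\lambda(tU_{\varepsilon})\leq c^{*}+B\,\varepsilon^{\beta_{q}}-A\,\lambda\,\varepsilon^{\alpha_{r}}+o(\varepsilon^{\alpha_{r}}),
\end{equation*}
with $A,B>0$ and exponents $\alpha_{r},\beta_{q}>0$ coming from the $L^{r}$ gain and the $D^{s,q}$ loss. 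The three arithmetic restrictions $q<N(p-1)/(N-s)$, $N>p^{2}s$ and $r>\max\{p,p_{s}^{*}-q/(p-1)\}$ are tailored precisely to force $\alpha_{r}<\beta_{q}$, so that the $L^{r}$-gain defeats the $D^{s,q}$-loss for sufficiently small $\varepsilon$ and every $\lambda>0$, giving $c_\lambda<c^{*}$.

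The main obstacle I expect is the expansion in item (ii): because the extremals of the fractional $p$-Sobolev inequality are not explicit when $p\neq 2$, one has to rely on polynomial decay and the delicate cut-off estimates of Brasco--Mosconi--Squassina, and one must simultaneously balance three different scales---critical, $q$-Gagliardo and subcritical $L^{r}$. Working out this balance is exactly what produces (and justifies) the precise numerical thresholds on $q$, $r$ and $N$ appearing in (ii); outside that window the scheme above breaks down and a restriction on $\lambda$, as in (i), becomes unavoidable.
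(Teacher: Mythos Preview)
Your outline is correct and follows the same strategy as the paper: mountain pass geometry for $I_\lambda$ on $\mathcal{W}$, boundedness of Palais--Smale sequences, and the level estimate $c_\lambda<\frac{s}{N}S^{N/(sp)}$ obtained exactly as you describe (large $\lambda$ in (i), truncated fractional Sobolev extremals in (ii), the latter relying on the Brasco--Mosconi--Squassina asymptotics and yielding the numerical constraints on $q,r,N$).

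One point where you diverge slightly from the paper: you invoke a concentration--compactness argument to obtain the full $(PS)_c$ condition below $c^{*}$. The paper does \emph{not} establish $(PS)_c$; instead it argues more directly. From the bounded PS sequence it extracts a weak limit $u$, shows (via pointwise convergence plus Lemma~\ref{Kavian} on each bilinear piece) that $u$ is a critical point, and then rules out $u\equiv 0$ by a Brezis--Lieb computation: if $u=0$ then $\int g(u_n^{+})^{r}\to 0$ by $(g_1)$, the remaining limits $a=\lim\|u_n\|_{s,p}^{p}$ and $b=\lim\|u_n\|_{s,q}^{q}$ satisfy $a+b=\lim\|u_n^{+}\|_{p_s^{*}}^{p_s^{*}}$, and combining $S(a+b)^{p/p_s^{*}}\leq a$ with $I_\lambda(u_n)\to c_\lambda$ forces $c_\lambda\geq\frac{s}{N}S^{N/(sp)}$. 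This avoids any profile decomposition and is somewhat lighter than what you propose; your route would also work but carries more overhead than is needed here.
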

	
\begin{theorem}\label{APLIC}
Let $2 < q \leq p < r < p_{s}^*$, $\lambda > 0$ and $0 < s < 1$, be such that $N > sp$. Assume that $g \in L^{\infty}(\mathbb{R}^N)$ satisfy $(g_1)$ and  $u \in D^{s,p}(\mathbb{R}^N) \cap  D^{s,p}(\mathbb{R}^N)$ is a solution of \eqref{EQ}. Then $u \in L^{\infty}(\mathbb{R}^N)  \cap C^{\alpha}_{loc}(\mathbb{R}^{N})$.
\end{theorem}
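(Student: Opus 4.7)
The plan is to deduce Theorem \ref{APLIC} from Theorem \ref{BOUNDED} by viewing the right-hand side of \eqref{EQ} as a fixed source term. Setting
\[
f(x) := \vert u(x)\vert^{p_s^*-2}u(x) + \lambda g(x)\vert u(x)\vert^{r-2}u(x),
\]
problem \eqref{EQ} takes the form of \eqref{P_1}, so it suffices to verify that $f$ lies in $L^{p_s^*/(p_s^*-1)}(\mathbb{R}^N)\cap L^\theta(\mathbb{R}^N)$ for some $\theta>N/(sp)$ (to get $u\in L^\infty$) and that $f\in L^\infty_{\mathrm{loc}}(\mathbb{R}^N)$ (to get the Hölder estimate).

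First I would check the easy membership $f\in L^{p_s^*/(p_s^*-1)}(\mathbb{R}^N)$: the embedding $D^{s,p}(\mathbb{R}^N)\hookrightarrow L^{p_s^*}(\mathbb{R}^N)$ handles the critical term directly, and Hölder's inequality with dual exponents $(p_s^*-1)/(p_s^*-r)$ and $(p_s^*-1)/(r-1)$, combined with $(g_1)$, handles the subcritical term. The harder membership is $f\in L^\theta$ for some $\theta>N/(sp)$, which cannot be read off from $u\in L^{p_s^*}$ alone because of the critical growth. To handle it I would rewrite \eqref{EQ} as
\[
(-\Delta_p)^s u + (-\Delta_q)^s u = W(x)\vert u\vert^{p-2}u,\qquad W(x) := \vert u\vert^{p_s^*-p} + \lambda g(x)\vert u\vert^{r-p},
\]
and use the identity $(p_s^*-p)\cdot N/(sp) = p_s^*$ (a direct computation) together with $g\in L^\infty$ and $r<p_s^*$ to conclude $W\in L^{N/(sp)}(\mathbb{R}^N)$. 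A Moser-type iteration in the spirit of Section \ref{bound}, performed with truncated test functions of the form $\varphi_M = u\,\min(\vert u\vert,M)^{p(\beta-1)}$ and exploiting the vanishing of the norm of $W$ over the superlevel sets $\{\vert u\vert>M\}$ as $M\to\infty$ to absorb the critical coefficient on the right-hand side, then bootstraps the integrability of $u$ from $L^{p_s^*}$ to $L^m$ for every finite $m$. This delivers $f\in L^\theta(\mathbb{R}^N)$ for any $\theta>N/(sp)$, so Theorem \ref{BOUNDED} yields $u\in L^\infty(\mathbb{R}^N)$.

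Finally, the global $L^\infty$-bound combined with $g\in L^\infty(\mathbb{R}^N)$ gives $f\in L^\infty_{\mathrm{loc}}(\mathbb{R}^N)$, and the Hölder part of Theorem \ref{BOUNDED} then concludes that $u\in C^\alpha_{\mathrm{loc}}(\mathbb{R}^N)$ for $\alpha\in(0,s(p-q)/(p-1))$. The main obstacle will be the critical bootstrap: the coefficient $\vert u\vert^{p_s^*-p}$ sits precisely at the borderline integrability $L^{N/(sp)}$, so the iteration does not close by routine Hölder bounds, and one must instead exploit the absolute continuity of the $L^{N/(sp)}$-norm of $W$ on $\{\vert u\vert>M\}$ to absorb the critical term. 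This is the classical Brezis--Kato obstruction for critical problems, here complicated by the non-local and non-homogeneous nature of the operator $(-\Delta_p)^s+(-\Delta_q)^s$.
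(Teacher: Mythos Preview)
Your proposal is correct and follows essentially the same route as the paper: reduce to Theorem~\ref{BOUNDED} by showing $f\in L^{\theta}(\mathbb{R}^N)$ for some $\theta>N/(sp)$, and obtain the needed higher integrability of $u$ via a truncated Moser test function together with the Brezis--Kato absorption of the critical weight on superlevel sets. The only cosmetic difference is that the paper performs a single bootstrap step (choosing $\beta$ so that $(\beta-1)p_s^*/p + p_s^* = \theta(p_s^*-1)$ and then picking the level $K_0$ to absorb), rather than iterating to all $L^m$, but the mechanism---and in particular the handling of the borderline coefficient $|u|^{p_s^*-p}\in L^{N/(sp)}$ that you flag as the main obstacle---is identical.
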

	
When $s = 1$ , \eqref{EQ} is reduced for the $(p,q)$-Laplacian equation
\begin{equation}
(-\Delta_{p}) u + (-\Delta_{q}) u= \vert u \vert^{p^* -2}u + \lambda g(x) \vert u\vert^{r-2}u, \ \ \ x \in \mathbb{R}^N. \label{PL1}
\end{equation}
	
The existence of a nontrivial solution of problem \eqref{PL1} has been studied by Chaves, Ercole and Miyagaki in \cite{Marcio}. They showed the existence of a non-trivial solution if $\lambda$ is large enough. Using the theory of regularity developed by  He and  Li in \cite{Gongbao}, they showed that the weak solutions are locally $C^{1,\alpha}$, if $g \in L^{t_1}(\mathbb{R}^{N}) \cap L^{\infty}(\mathbb{R}^{N})$.
	
In Section \ref{SEC1}, motivated by \cite{Marcio} we will assume that $1< q \leq p <r < p_{s}^{*}$ and  investigate the existence of a nontrivial solution for the problem \eqref{EQ}. We show that for $\lambda$ large enough exists a solution of problem \eqref{EQ}. 

Furthermore, assuming that $1<q<\frac{N(p-1)}{N-s} < p \leq \max\left\{p,p_{s}^* - \frac{q}{p-1} \right\} < r < p_{s}^* $, and $ N>p^2s $, we show that \eqref{EQ} has a solution for any $\lambda >0$ by using estimates for the extremal function (see \cite{IND,Brasco,Mos}).
	
In order to obtain a nontrivial solution of \eqref{EQ}, we apply a version of the Mountain Pass Theorem (see \cite{Springer}). 	
	
We also adapt standard arguments to prove the boundedness of Palais-Smale sequences. In order to overcome the lack of compactness of Sobolev's embedding, we prove a pointwise convergence result, which together with the Brezis-Lieb lemma give us the weak convergence. Following arguments similar to \cite{Marcio,GB,Yin}, in Section \ref{SEC1} we obtain a strict upper bound for $c_{\lambda}$, the level of the Palais-Smale sequence, valid for all $\lambda$ large enough. Applying this fact and arguments adapted from \cite{Marcio,Alves} to conclude that the nonnegative critical points for $I_{\lambda}$(the associated euler lagrange functional), obtained from the mountain pass theorem are not the trivial ones.
	
Taking advantage of the compact embedding $W_{0}^{ s,p}(\Omega)\hookrightarrow L^{t}(\Omega)$ for $1 \leq t < p_s^*$, we can study the fractional $p$-Laplacian problems in  bounded domains. However, the situation is quite different for the $(p,q)$-Laplacian, since the embedding $W^{s,p}(\Omega) \hookrightarrow W^{s,q}(\Omega)$, for $\Omega \subset \mathbb{R}^{N}$ with $p\neq q$ does not always exist [cf. \cite{ContraEx}], which is a additional difficulty.

When the domain is the whole $\mathbb{R}^N$, the Sobolev embedding is not compact. To work around this problem, a concentration-compactness principle or minimization restricted methods (see \cite{Wang,Binlin}) have been used to find weak solutions in $W^{s,p}(\mathbb{R}^N )$. 

Finally, in Section \ref{prelim}, we recollect some basic fact about the fractional framework that will be very important in the paper.
\section{Preliminaries}\label{prelim}
\subsection{Functions spaces} \mbox{}
For all measurable function $u: \mathbb{R}^{N} \rightarrow \mathbb{R}$, let 
\begin{align*}
[ u ]_{s,m,\Omega} = \left(\displaystyle\int_{\Omega}\displaystyle\int_{\Omega} \frac{|u(x) - u(y) |^{m}}{|x-y|^{N+sm}} \dd x\dd y\right)^{1/m} 
\end{align*}
be the Gagliardo semi-norm. We will consider the following spaces (see \cite{Adams,Palatucci,BP} for details):
\begin{align*}
W^{s,m}(\Omega) &= \{ u \in L^{m}(\Omega)\ ; \,\, [ u ]_{s,m,\Omega}  < \infty \}, 
\end{align*}
equipped with the norm
\begin{align*}
\| u \|_{s,m}=\|u\|_{W^{s,m}(\Omega)} = \|u\|_{L^{m}(\Omega)} +[ u ]_{s,m,\Omega} ,
\end{align*}
and 
\begin{align*}
W^{s,m}_{0}(\Omega) &= \{u \in W^{s,m}(\mathbb{R}^N); \,\, u =0 \ \text{in} \ \mathbb{R}^N\backslash\Omega\},\\
W^{-s,m'}(\Omega) &= \left(W^{s,m}(\Omega) \right)^*,\ m'=\frac{m}{m-1}\ (\mbox{dual space}).
\end{align*}
For any $1<m<\frac{N}{s}$ define the  reflexive Banach space
\begin{align*}
D^{s,m}(\mathbb{R}^{N}) := \{u\in L^{m^{*}_{s}}(\mathbb{R}^{N}); [ u ]_{s,m} 
 < \infty \}
\end{align*} 
where $m^{*}_{s} = \frac{Nm}{N-sm}$. The so-called best Sobolev constant for the embedding $D^{s,m}(\mathbb{R}^{N})\hookrightarrow L^{m^{*}_{s}}(\mathbb{R}^{N})$ is given by
\begin{align}\label{S}
S_{s,m} = \displaystyle\inf_{u \in D^{s,m}(\mathbb{R}^N) \backslash \{0 \}} \frac{ [ u ]_{s,m} 
^m}{ \Vert u \Vert_{m^*_{s}}^m}.
\end{align}
See \cite{Brasco} for details.
	
We will frequently make use of the following space (See \cite{Ian}):
\begin{definition}
Let $\Omega \subset \mathbb{R}^N$ be bounded. We set \footnote{$\Omega\Subset U$ means that $\Omega$ is a compact subset of $U$.}
\begin{equation*}
\tilde{W}^{s,m}(\Omega) := \left\lbrace u \in L_{loc}^{m}(\mathbb{R}^N) : \exists \ U \Supset  \Omega,\, \,  \Vert u \Vert_{W^{s,p}(U)} + \displaystyle\int_{\mathbb{R}^N} \frac{\vert u(x) \vert^{p-1}}{(1 + \vert x \vert)^{N + sp}} \dd x < \infty \right\rbrace.
\end{equation*}
		
If $\Omega$ is unbounded, we set 
\begin{equation*}
\tilde{W}_{loc}^{s,m}(\Omega) := \{u \in L_{loc}^{m}(\mathbb{R}^N) : u \in \tilde{W}^{s,m}(\Omega') \ \mbox{for any bounded} \ \Omega' \subseteq \Omega \}.
\end{equation*}
\end{definition}
For all $\alpha \in (0,1]$ and all measurable $u : \overline{\Omega} \rightarrow \mathbb{R}$ we set
\begin{align*}
&\vert u\vert_{C^{\alpha}(\overline{\Omega})} = \displaystyle\sup_{x,y \in \overline{\Omega},x\neq y} \frac{\vert u(x) - u(y) \vert}{\vert x-y\vert^{\alpha}} \\
&C^{0,\alpha}(\overline{\Omega}) = \{u \in C(\overline{\Omega}): |u|_{C^{\alpha}(\overline{\Omega})} < \infty \}.
\end{align*}
Throughout the paper we assume that $0<\alpha<1$ and
\[C^{\alpha}(\overline{\Omega})=C^{0,\alpha}(\overline{\Omega}),\]
being a Banach space under the norm 
\[\Vert u \Vert_{C^{\alpha}(\overline{\Omega})} = \Vert u\Vert_{L^\infty{\Omega})} + \vert u \vert_{C^{\alpha}(\overline{\Omega})}.\]
	
We recall that the $nonlocal \ tail$ centered at $x \in \mathbb{R}^N$ with radius $R > 0$, is defined as
\begin{align}
Tail_{m}(u;x;R) = \left(R^{sm} \displaystyle\int_{B_{R}^{c}(x)} \frac{\vert u(y)\vert^{m-1}}{\vert x - y \vert^{N+sm}} \dd y \right)^{1/(m-1)}.
\end{align}
	
We will also set $Tail_m(u;0;R) = Tail_m(u;R)$.
\begin{remark}\label{OBSFUN}
Note that, if $u \in L_{\infty}(\mathbb{R}^{N})$ then
\begin{equation*}
Tail_m(u;R)^{m-1} = R^{sm} \displaystyle\int_{B_{R}^{c}(0)} \frac{\vert u(y)\vert^{m-1}}{\vert y \vert^{N+sm}} \dd y \leq \frac{N \omega_{N} \left\Vert u \right\Vert_{\infty}^{m-1}}{sm}.
\end{equation*}
Thus, if $u \in L_{\infty}(\mathbb{R}^{N})$ we have $Tail_m(u;R)^{m-1} \leq C$, where $C>0$ is independent of $R$.
\end{remark}
	
\subsection{Some elementary inequalities} \mbox{}
	
For all $t \in \mathbb{R}$, we set
\[
J^{m} = \vert t \vert^{m-2} t.
\]	
We recall a few well-known inequalities
\begin{align}
(a+b)^m &\leq 2^{m-1}(a^m + b^m), \ a,b\geq 0, \ m \geq 1; \label{inq1}\\
(a+b)^m &\leq a^m + b^m \  a,b\geq 0,\ m\in (0,1]; \label{inq2}\\
\big\vert J_{m+1}(a) - J_{m+1}(b) \big\vert &\leq m\big(J_{m}(a) + J_{m}(b)\big)\vert a-b\vert, \ a,b \in \mathbb{R},\ q \geq 1. \label{inq3}
\end{align}	
Using the Taylor's formula and Young's inequality, we can prove that, for all $\theta>0$ exists $C_{\theta}>0$ such that
\begin{align}
(a + b)^{q} - a^q \leq \theta a^q + C_{\theta} b^q,  \  a,b \geq 0, \  q>0, \ \mbox{and} \ C_{\theta} \rightarrow \infty \ \mbox{as} \ \theta \rightarrow 0^{+}. \label{inq4}
\end{align}
Consider, for $b>0$, the function $f(t) = J_{m}(t) - J_{m}(t-b)$. Its global minimum is attained on $f(b/2) = 2^{2-m}b^{m-1}$, and hence we obtain the inequality
\begin{align}
J_{m}(a) - J_m(a-b) \geq 2^{2-m}b^{m-1}, \ \forall a\in \mathbb{R}, \ b \geq 0, \ \mbox{and}\ q\geq 1. \label{inq5}
\end{align}
	
Finally, in order to apply Moser iteration process we will use the following lemma:
\begin{lemma}\label{Le}
Let $1 < m < \infty$ and $g:\mathbb{R}\longrightarrow \mathbb{R}$ be a increasing function. Defining
\[
G(t) = \displaystyle\int_{0}^{t} (g'(\tau))^{\frac{1}{m}} \dd\tau, \ t\in \mathbb{R},
\]
we have that
\[
J_m(a-b)(g(a) - g(b)) \geq \vert G(a) - G(b) \vert^{m}.
		\]
\end{lemma}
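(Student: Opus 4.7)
The plan is to reduce to the case $a>b$ and apply Hölder's inequality on the integral defining $G$. First, I would observe that both sides of the claimed inequality are invariant under interchanging $a$ and $b$: on the right, $|G(a)-G(b)|^m$ is obviously symmetric; on the left, swapping $a$ and $b$ flips the sign of both $J_m(a-b)=|a-b|^{m-2}(a-b)$ and, since $g$ is increasing, of $g(a)-g(b)$, so the product is unchanged. Thus I may assume $a>b$, in which case $g(a)-g(b)\geq 0$ and $G(a)-G(b)\geq 0$ (the latter because the integrand $(g')^{1/m}$ is nonnegative), and the inequality reduces to
\begin{equation*}
(a-b)^{m-1}\bigl(g(a)-g(b)\bigr)\;\geq\;\left(\int_{b}^{a} (g'(\tau))^{1/m}\,\dd\tau\right)^{m}.
\end{equation*}

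Next I would apply Hölder's inequality to the integral on the right with the conjugate exponents $m$ and $m/(m-1)$, writing $(g'(\tau))^{1/m}=(g'(\tau))^{1/m}\cdot 1$:
\begin{equation*}
\int_{b}^{a} (g'(\tau))^{1/m}\,\dd\tau \;\leq\; \left(\int_{b}^{a} g'(\tau)\,\dd\tau\right)^{1/m}\!\left(\int_{b}^{a} 1\,\dd\tau\right)^{(m-1)/m}\!\!.
\end{equation*}
Raising both sides to the $m$-th power gives
\begin{equation*}
\left(\int_{b}^{a}(g'(\tau))^{1/m}\,\dd\tau\right)^{m}\leq\left(\int_{b}^{a} g'(\tau)\,\dd\tau\right)(a-b)^{m-1}\leq \bigl(g(a)-g(b)\bigr)(a-b)^{m-1},
\end{equation*}
which is the desired inequality, provided one can bound $\int_b^a g'(\tau)\,\dd\tau$ by $g(a)-g(b)$.

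The only delicate point is justifying $\int_{b}^{a} g'(\tau)\,\dd\tau \leq g(a)-g(b)$ for a merely increasing $g$. I would invoke the classical theorem that a monotone function is differentiable almost everywhere and its derivative is Lebesgue integrable with $\int_{b}^{a} g'\leq g(a)-g(b)$ (equality holding for absolutely continuous $g$); this is the only inequality that is used, and it goes in the right direction. The lemma will be applied in the Moser iteration to specific smooth (in fact, piecewise $C^1$) truncations $g$ for which this step is trivial, so in practice there is no regularity issue to worry about. I do not anticipate any genuine obstacle — the whole argument is essentially one application of Hölder's inequality.
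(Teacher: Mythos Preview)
Your argument is correct: the reduction to $a>b$ by symmetry followed by H\"older's inequality with exponents $m$ and $m'=m/(m-1)$, together with the monotone-function inequality $\int_b^a g'\,\dd\tau\leq g(a)-g(b)$, gives exactly the claim. The paper does not supply its own proof but simply cites \cite[Lemma~A.2]{BP}, where the same H\"older argument appears, so your approach coincides with the intended one.
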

\textbf{Proof:} See \cite[Lemma A.2]{BP}
\subsection{Some basic properties of the fractional $(p,q)$-Laplacian} \mbox{}
The following result describes a fundamental non-local feature of the fractional $(p,q)$-Laplacian operator $(-\Delta_p)^s + (-\Delta_q)^s$.
	
Given $1\leq q\leq p < \infty$ and $\Omega \subset \mathbb{R}^{N}$ denote by
\begin{equation*}
\mathcal{W}(\Omega) = \tilde{W}^{s,p}(\Omega) \cap \tilde{W}^{s,q}(\Omega).
\end{equation*}
\begin{definition}\label{DEF1}
Let $\Omega \subset \mathbb{R}^{N}$ be bounded and $u \in \mathcal{W}(\Omega)$. We say that $u$ is \textit{weak solution} of $(-\Delta_p)^{s} u + (-\Delta_{q})^{s}u = f$ in $\Omega$ if, for all $\varphi \in C_{0}^{\infty}(\Omega)$,
\begin{equation*}
\displaystyle\sum_{m=p,q}\displaystyle\int_{\mathbb{R}^{N}}\displaystyle\int_{\mathbb{R}^{N}} \frac{J_{m}(u(x) - u(y))(\varphi(x) - \varphi(y))}{|x-y|^{N+sm}} \dd x\dd y = \displaystyle\int_{\Omega}f \varphi \dd x
\end{equation*} 
		
The inequality $(-\Delta_{p})^{s}u + (-\Delta_q)^{s}u \leq f$ weakly in $\Omega$ will mean that
\begin{equation*}
\displaystyle\sum_{m=p,q}\displaystyle\int_{\mathbb{R}^{N}}\displaystyle\int_{\mathbb{R}^{N}} \frac{J_{m}(u(x) - u(y))(\varphi(x) - \varphi(y))}{|x-y|^{N+sm}} \dd x\dd y \leq \displaystyle\int_{\Omega}f \varphi \dd x
\end{equation*}
for all $\varphi \in C^{\infty}_{0}(\Omega)$, $\varphi \geq 0$. Similarly for $(-\Delta_{p})^{s}u + (-\Delta_q)^{s}u \geq f$. 
\end{definition}
\begin{remark}
By Lemma 2.3 in \cite{Ian} the functional
\begin{align*}
W^{s,m}_{0}(\Omega) \ni \varphi \mapsto (u,\varphi):= \displaystyle\int_{ \mathbb{R}^N} \displaystyle\int_{ \mathbb{R}^N} \frac{J_{m}(u(x)-u(y))(\varphi(x)-\varphi(y))}{|x-y|^{N+sm}}\dd x\dd y
\end{align*}
is finite and belongs to $W^{-s.m'}(\Omega)$, which implies that the Definition \ref{DEF1} makes sense.
\end{remark}
	
\begin{lemma}\label{lem1}
Suppose that $u \in \mathcal{W}(\Omega)$ satisfies $(-\Delta_p)^su + (-\Delta_q)^su = f$ weakly in $\Omega$ for some $f \in L^{1}_{loc}(\Omega)$. Let $v \in L^{1}_{loc}(\mathbb{R}^N)$ be such that 
\begin{align*}
\textup{dist}(\textup{supp}\,(v) , \Omega) >0, \quad \displaystyle\int_{\Omega^{c}}\frac{\vert v(x) \vert^{m-1}}{(1+\vert x \vert)^{N + sm}} \dd x < \infty, \ \mbox{for} \ m \in \{p,q\}.
\end{align*}
		
Then, $u + v \in \mathcal{W}(\Omega)$ satisfy $(-\Delta_p)^s(u+v) + (-\Delta_q)^s(u+v) = f + h$ weakly in $\Omega$, where
\begin{equation*}
h(x) = 2 \displaystyle\sum_{m=p,q} \displaystyle\int_{\textup{supp}\, (v)} \frac{J_m( u(x) - u(y) - v(y)) - J_m( u(x) - u(y))}{\vert x - y \vert^{N+sm}}\dd x
\end{equation*}
\end{lemma}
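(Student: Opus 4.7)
My plan is to start by checking that $u+v \in \mathcal{W}(\Omega)$, then compute the weak form $((u+v,\varphi))_m$ against a test function $\varphi \in C_0^{\infty}(\Omega)$ for $m \in \{p,q\}$, and finally identify the extra term as $\int_\Omega h\varphi\,dx$. Since $\mathrm{dist}(\mathrm{supp}(v),\Omega)>0$, pick an open $U \Supset \Omega$ disjoint from $\mathrm{supp}(v)$. Then $v\equiv 0$ on $U$, so $u+v = u$ on $U$, giving $\Vert u+v\Vert_{W^{s,m}(U)} = \Vert u\Vert_{W^{s,m}(U)}<\infty$; the tail integral is controlled by the triangle inequality and the assumed integrability of $|v|^{m-1}/(1+|x|)^{N+sm}$ together with the same property for $u$. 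Hence $u+v\in\tilde W^{s,m}(\Omega)$ for each $m$.

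Next I would test the definition of weak solution with $\varphi\in C_0^\infty(\Omega)$ and subtract the identity for $u$, obtaining for each $m$
\begin{equation*}
((u+v,\varphi))_m - ((u,\varphi))_m = \int_{\mathbb{R}^N}\!\!\int_{\mathbb{R}^N}\!\frac{\Phi_m(x,y)\,(\varphi(x)-\varphi(y))}{|x-y|^{N+sm}}\,dx\,dy,
\end{equation*}
where $\Phi_m(x,y) := J_m((u+v)(x)-(u+v)(y)) - J_m(u(x)-u(y))$. The key observation is that $\Phi_m$ is antisymmetric under swap of $x,y$ (because $J_m$ is odd), so renaming variables gives
\begin{equation*}
\int\!\!\int\frac{\Phi_m(x,y)(\varphi(x)-\varphi(y))}{|x-y|^{N+sm}}dxdy = 2\int_{\Omega}\!\!\int_{\mathbb{R}^N}\frac{\Phi_m(x,y)\varphi(x)}{|x-y|^{N+sm}}dxdy,
\end{equation*}
where I used $\mathrm{supp}(\varphi)\subset\Omega$. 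On $x\in\Omega\subset U$ we have $v(x)=0$, so $\Phi_m(x,y)=J_m(u(x)-u(y)-v(y))-J_m(u(x)-u(y))$. This quantity vanishes whenever $v(y)=0$, i.e.\ outside $\mathrm{supp}(v)$. Thus the inner integral reduces to an integral over $\mathrm{supp}(v)$, and summing over $m=p,q$ yields $\int_\Omega h(x)\varphi(x)\,dx$ with $h$ as stated. Adding $((u,\varphi))_p+((u,\varphi))_q=\int_\Omega f\varphi\,dx$ concludes the proof.

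The main obstacle is justifying the use of Fubini (and hence the symmetrization) by showing that the double integral converges absolutely. To get this I would apply \eqref{inq3} in the form
$|J_m(a-b)-J_m(a)|\le C_m\bigl(|a|^{m-2}+|b|^{m-2}\bigr)|b|$ (for $m\ge 2$; for $1<m<2$ one uses the analogous Hölder-type bound) with $a = u(x)-u(y)$, $b = v(y)$, and then bound the resulting integrals using the positive distance between $\mathrm{supp}(v)$ and $\Omega$ (which makes $|x-y|$ bounded below on the relevant region), the integrability of $|v|^{m-1}/(1+|y|)^{N+sm}$, and the hypothesis $u\in\tilde W^{s,m}(\Omega)$. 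The positive separation is crucial: it both controls $|x-y|^{-(N+sm)}$ for $x$ near $\Omega$ and $y\in\mathrm{supp}(v)$, and, for large $|y|$, replaces this kernel by $(1+|y|)^{-(N+sm)}$, matching exactly the assumed tail condition on $v$. Once absolute convergence is established, the rest is bookkeeping, and the formula for $h$ follows.
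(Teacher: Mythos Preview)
Your proposal is correct and follows essentially the same route as the paper: both first verify $u+v\in\mathcal{W}(\Omega)$ by restricting to an intermediate open set $U$ with $\Omega\Subset U\Subset(\mathrm{supp}\,v)^c$, and then compute the weak form against $\varphi\in C_0^\infty(\Omega)$ to extract the extra term $h$. The only cosmetic difference is that the paper obtains the factor $2$ by splitting the integration domain $\mathbb{R}^N\times\mathbb{R}^N$ into $\Omega\times\Omega$, $\Omega\times\Omega^c$, $\Omega^c\times\Omega$ and recombining, whereas you reach it in one line via the antisymmetry $\Phi_m(y,x)=-\Phi_m(x,y)$; your explicit discussion of absolute integrability (needed for Fubini) is also more careful than the paper, which performs the manipulation formally.
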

\begin{proof} It suffices to consider the case when $\Omega$ is bounded. Define $K =\textup{supp}\,(v)$ and consider $U \subset \mathbb{R}^N$ such that, 
\begin{equation*}
\Omega \Subset U \quad \mbox{and} \quad \vert \vert u \vert \vert_{W^{s,m}(U)} + \displaystyle\int_{\mathbb{R}^N} \frac{\vert u(x)\vert^{m-1}}{(1+\vert x \vert)^{N+sm}} \dd x < \infty
\end{equation*}
for $m \in \{p,q\}$.

Without loss of generality we can assume that $\Omega \Subset  U \Subset  K^{c}$, since $\textup{dist}\,(\Omega, K)=d>0$. Clearly $u+v = u$ in $U$, and thus $u+v \in W^{s,m}(U)$ for $m \in \{p,q\}$. Moreover, for $m \in \{p,q\}$ we have
\begin{equation*}
\displaystyle\int_{\mathbb{R}^N} \frac{\vert u(x) + v(x) \vert^{m-1}}{(1+\vert x\vert)^{N+sm}}\dd x \leq C \displaystyle\int_{\mathbb{R}^N} \frac{\vert u(x)\vert^{m-1}}{(1+\vert x \vert)^{N+sm}} \dd x + C\displaystyle\int_{\mathbb{R}^N} \frac{\vert u(x)\vert^{m-1}}{(1+\vert x \vert)^{N+sm}} \dd x < \infty.
\end{equation*} 
		
Therefore, $u+v \in \mathcal{W}(\Omega)$. 
		
Now assume that $(-\Delta_p)^su + (-\Delta_q)^su = f$ weakly in $\Omega$. Choose $\varphi \in C^{\infty}_0(\Omega)$ and compute
\begin{align*}
\sum_{m=p,q}\displaystyle\int_{\mathbb{R}^N}&\displaystyle\int_{\mathbb{R}^N} \frac{J_m(u(x)+v(x)-u(y)-v(y))(\varphi(x) - \varphi(y))}{\vert x-y \vert^{N+sm}} \dd x\dd y\hfill \\
&= \displaystyle\sum_{m=p,q}\displaystyle\int_{\Omega}\displaystyle\int_{\Omega} \frac{J_m(u(x)-u(y))(\varphi(x) - \varphi(y))}{\vert x-y \vert^{N+sm}} \dd x\dd y \\
&\qquad+\displaystyle\sum_{m=p,q}\displaystyle\int_{\Omega^c}\displaystyle\int_{\Omega} \frac{J_m(u(x)-u(y)-v(y))\varphi(x)}{\vert x-y\vert^{N+sm}} \dd x\dd y \\
&\qquad-\displaystyle\sum_{m=p,q}\displaystyle\int_{\Omega}\displaystyle\int_{\Omega^c} \frac{J_m(u(x)-u(y)+v(y))\varphi(y)}{\vert x-y\vert^{N+sm}} \dd x\dd y \\
&=\displaystyle\sum_{m=p,q}\displaystyle\int_{\mathbb{R}^N}\displaystyle\int_{\mathbb{R}^N}\frac{J_m(u(x)-u(y))(\varphi(x) - \varphi(y))}{\vert x-y\vert^{N+sm}}\dd x\dd y \\
&\qquad+ \displaystyle\sum_{m=p,q}\displaystyle\int_{\Omega}\displaystyle\int_{\Omega^c} \frac{J_m(u(x)-u(y))\varphi(y)}{\vert x-y\vert^{N+sm}} \dd x\dd y \\
&\qquad-\displaystyle\sum_{m=p,q}\displaystyle\int_{\Omega^c}\displaystyle\int_{\Omega} \frac{J_m(u(x)-u(y)-v(y))\varphi(x)}{\vert x-y\vert^{N+sm}} \dd x\dd y \\
&\qquad+2\displaystyle\sum_{m=p,q}\displaystyle\int_{\Omega}\displaystyle\int_{\Omega^c} \frac{J_m(u(x)-u(y)-v(y))\varphi(x)}{\vert x-y\vert^{N+sm}} \dd y \dd x \\
&= \displaystyle\int_{\Omega} \left( f(x)  + 2\displaystyle\sum_{m=p,q} \displaystyle\int_{\Omega^c} \frac{J_m(u(x)-u(y)-v(y)) - J_m(u(x)-u(y))}{\vert x-y \vert^{N+sm}}\dd y\right) \varphi(x) \dd x \\
&=\displaystyle\int_{\Omega} (f(x) + h(x)) \varphi(x)\dd x.
\end{align*}
\end{proof}
\begin{proposition}[Comparison Principle]\label{PC}
Let $\Omega$ be bounded, and $u,v \in \mathcal{W}(\Omega)$ satisfy $u \leq v$ in $\Omega^c$. Suppose that, for all $\varphi \in W_0^{s,p}(\Omega) \cap W_0^{s,q}(\Omega)$, $\varphi \geq 0$ in $\Omega$, it is valid
\begin{align*}
\displaystyle\sum_{m=p,q}\displaystyle\int_{\mathbb{R}^N}\displaystyle\int_{\mathbb{R}^N}&\frac{J_m(u(x)-u(y))(\varphi(x) - \varphi(y))}{\vert x-y\vert^{N+sm}}\dd x\dd y  \\
& \leq \displaystyle\sum_{m=p,q}\displaystyle\int_{\mathbb{R}^N}\displaystyle\int_{\mathbb{R}^N}\frac{J_m(v(x)-v(y))(\varphi(x) - \varphi(y))}{\vert x-y\vert^{N+sm}}\dd x\dd y. 
\end{align*}
Then $u \leq v$ in $\Omega$.
\end{proposition}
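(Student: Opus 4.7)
The plan is to test the difference of the two weak inequalities against $\varphi := (u-v)^+$, exploiting the monotonicity of $J_m$ in a pointwise sign analysis. Since $u \leq v$ on $\Omega^c$ by hypothesis, $\varphi$ vanishes off $\Omega$; and since $u,v \in \mathcal{W}(\Omega)$ both lie in $W^{s,m}(U)$ for some open $U \Supset \Omega$ and $m \in \{p,q\}$, the $1$-Lipschitz character of $t \mapsto t^+$ yields $\varphi \in W^{s,m}(U)$. The ``mixed'' contribution $\int_\Omega \int_{U^c} |\varphi(x)|^m |x-y|^{-N-sm}\dd y\dd x$ is finite because $\mathrm{dist}(\overline{\Omega}, U^c) > 0$, so $\varphi \in W^{s,p}_0(\Omega) \cap W^{s,q}_0(\Omega)$ and is an admissible test function.

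Substituting $\varphi$ in the difference of the two weak inequalities, and writing $a = u(x) - u(y)$, $b = v(x) - v(y)$, $w = u-v$ (so that $a-b = w(x) - w(y)$ and $\varphi = w^+$), I reduce the problem to
\begin{equation*}
\sum_{m=p,q}\int_{\mathbb{R}^N}\int_{\mathbb{R}^N}\frac{[J_m(a) - J_m(b)]\,[w^+(x) - w^+(y)]}{|x-y|^{N+sm}}\dd x\dd y \leq 0.
\end{equation*}
The key step is then to show the integrand is pointwise non-negative, by partitioning $\mathbb{R}^N \times \mathbb{R}^N$ according to the signs of $w(x), w(y)$: on $\{w\leq 0\}\times\{w\leq 0\}$ both $w^+$-values vanish; on $\{w>0\}\times\{w>0\}$ the bracket becomes $(J_m(a) - J_m(b))(a-b)\geq 0$ by monotonicity of $J_m$; and on the mixed region $\{w(x)>0,\,w(y)\leq 0\}$ one has $a - b = w(x) - w(y) > 0$ and $w^+(x) - w^+(y) = w(x) > 0$, giving a non-negative product (symmetrically in the other mixed region).

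Combining the two facts, the non-negative integrand has non-positive integral, so it vanishes a.e. By the \emph{strict} monotonicity of $J_m$, vanishing of the integrand on the mixed region would force $a = b$, i.e., $w(x) = w(y)$, contradicting the defining sign inequalities unless that region has vanishing Lebesgue measure in $\mathbb{R}^{2N}$; equivalently $|\{w>0\}|\cdot|\{w\leq 0\}| = 0$. Since $\Omega^c \subset \{w\leq 0\}$ has infinite Lebesgue measure, we conclude $|\{w>0\}| = 0$, i.e., $u \leq v$ a.e.\ in $\Omega$. The step I expect to require the most care is the admissibility of $(u-v)^+$ as a test function, given that $\mathcal{W}(\Omega)$ is only defined via local Sobolev regularity plus a weighted tail condition; the pointwise sign/monotonicity argument driving the remainder of the proof is standard.
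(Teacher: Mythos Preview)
Your argument is correct and uses the same test function $\varphi=(u-v)^+$ as the paper, but the core mechanism differs. The paper linearizes via the identity $J_m(b)-J_m(a)=(m-1)(b-a)\int_0^1|a+t(b-a)|^{m-2}\,\dd t$, extracting a factor $[(u-v)(x)-(u-v)(y)]$ times a non-negative weight $Q_m(x,y)$, and then applies the pointwise inequality $(\xi-\eta)(\xi^+-\eta^+)\ge|\xi^+-\eta^+|^2$ to conclude that $(u-v)^+$ is a.e.\ constant, hence zero by the boundary data. You bypass both the integral representation and the quadratic bound: a direct four-case sign analysis gives $[J_m(a)-J_m(b)][w^+(x)-w^+(y)]\ge 0$ pointwise, and strict monotonicity of $J_m$ on the mixed region $\{w(x)>0,\ w(y)\le 0\}$ then forces $|\{w>0\}|\cdot|\{w\le 0\}|=0$, after which $|\Omega^c|=\infty$ finishes. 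Your route is a bit more elementary (no auxiliary identities or quadratic inequality needed); the paper's route gives the marginally stronger intermediate statement that $(u-v)^+$ is constant, which is of course equivalent once the boundary condition is invoked. Your treatment of why $(u-v)^+\in W^{s,p}_0(\Omega)\cap W^{s,q}_0(\Omega)$ is also more explicit than the paper's, which simply defers this point to a reference.
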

\begin{proof}
The prove is a straightforward calculus, but for convenience of the reader we sketch the details. We 
Subtracting the above equations and adjusting the terms, yields
\begin{align}\label{int1}
0&\geq \displaystyle\int_{\mathbb{R}^{N}}\displaystyle\int_{\mathbb{R}^{N}} \left(\frac{J_{p}(u(x) - u(y)) - J_{p}(v(x) - v(y)) }{\vert x-y\vert^{N+sp}}\right)(\varphi(x) - \varphi(y)) \dd x\dd y\nonumber\\
&\quad+ \displaystyle\int_{\mathbb{R}^{N}}\displaystyle\int_{\mathbb{R}^{N}} \left(\frac{J_{q}(u(x) - u(y)) - J_{q}(v(x) - v(y))}{\vert x-y\vert^{N+sq}}\right)(\varphi(x)-\varphi(y)) \dd x\dd y,
\end{align}
since $\varphi\geq 0$.

We show that the integrand is non-negative for $\varphi = (u-v)^{+} \in \mathcal{W}$ (See Proposition 2.10 in \cite{Ian}). Taking $a = v(x) - v(y)$ and $b = u(x)- u(y)$ , the identity
\begin{align*}
J_{m}(b) - J_m(a) = (m-1)(b-a)\displaystyle\int_{0}^{1} \vert a + t(b-a) \vert^{m-2} \dd t
\end{align*}
yields
\begin{align*}
J_{m}(u(x) - u(y)) - J_{m}(v(x) - v(y)) = (m-1)\left[(u-v)(x) - (u-v)(y) \right]Q_{m}(x,y),
\end{align*}
where $Q_{m}(x,y) = \displaystyle\int_{0}^{1}\left\vert (v(x) - v(y)) + t[(u-v)(x) - (u-v)(y)]\right\vert^{m-2}\dd t$.

We have $Q_{m}(x,y) \geq 0$ and $Q_{m}(x,y) = 0$ only if $v(x) = v(y)$ and $u(x) = u(y)$.

Rewriting the integrands in \eqref{int1} we obtain
\begin{multline} \label{int2}
\displaystyle\int_{\mathbb{R}^{N}}\displaystyle\int_{\mathbb{R}^{N}} \left(\frac{(p-1)\left[(u-v)(x) - (u-v)(y) \right]Q_{p}(x,y)}{\vert x-y\vert^{N+sp}}\right)(\varphi(x)-\varphi(y))\dd x\dd y\\
+ \displaystyle\int_{\mathbb{R}^{N}}\displaystyle\int_{\mathbb{R}^{N}} \left(\frac{(q-1)\left[(u-v)(x) - (u-v)(y) \right]Q_{q}(x,y)}{\vert x-y\vert^{N+sq}}\right)(\varphi(x)-\varphi(y)) \dd x\dd y  \leq 0. 
\end{multline}
We now choose the test function $\varphi = (u-v)^{+}$ and define
\[
\psi = u-v = (u-v)^{+} - (u-v)^{-} , \quad \varphi = (u-v)^{+} = \psi^{+}.
\]
From (\ref{int2}) results that
\begin{align*}
\displaystyle\int_{\mathbb{R}^{N}}\displaystyle\int_{\mathbb{R}^{N}} &\left(\frac{(p-1)(\psi(x) - \psi(y) )(\psi^{+}(x)-\psi^{+}(y))Q_{p}(x,y)}{\vert x-y\vert^{N+sp}}\right)\dd x\dd y \\
&+\displaystyle\int_{\mathbb{R}^{N}}\displaystyle\int_{\mathbb{R}^{N}} \left(\frac{(q-1)(\psi(x) - \psi(y) )(\psi^{+}(x)-\psi^{+}(y))Q_{q}(x,y)}{\vert x-y\vert^{N+sq}}\right)\dd x\dd y \leq 0
\end{align*}
Using the inequality
\[
(\xi - \eta)(\xi^{+} - \eta^{+}) \geq \vert \xi^{+} - \eta^{+} \vert^{2}
\]
we can see that
\begin{align*}
\displaystyle\int_{\mathbb{R}^{N}}\displaystyle\int_{\mathbb{R}^{N}} &\frac{(p-1)\vert \psi^{+}(x)-\psi^{+}(y)\vert^{2}Q_{p}(x,y)}{\vert x-y\vert^{N+sp}} \dd x\dd y \\
&+ \frac{(q-1)\vert \psi^{+}(x)-\psi^{+}(y)\vert^{2}Q_{q}(x,y)}{\vert x-y\vert^{N+sq}} \dd x\dd y \leq 0.
\end{align*}
Thus
\[
\psi^{+}(x) = \psi^{+}(y) \ \ \mbox{or} \ \ Q_{m}(x,y) = 0,
\]
at a. e. point $(x,y)$. Also the latter alternative implies that $\psi^{+}(x) = \psi^{+}(y)$, and so
\[
(u-v)^{+}(x) = C \geq 0,  \ \ \forall x \in \mathbb{R}^{N}.
\]
The boundary condition implies that $C=0$ and consequently $v\geq u$ in $\mathbb{R}^{N}$.
\end{proof}
\begin{proposition}\label{Prop1}
Suppose $\Omega$ is bounded, $u \in \tilde{W}^{s,m}(\Omega) \cap C_{loc}^{1,\gamma}(\Omega)$, with $\gamma \in [0,1]$ such that
$$ \gamma > \begin{cases}
1 - m(1-s),&\mbox{if}\quad m\geq 2,\\
\frac{1-mp(1-s)}{m-1}, &\mbox{if} \quad m<2.
\end{cases}
$$
Then $(-\Delta_m)^su = f$ strongly in $\Omega$ for some $f \in L_{loc}^{\infty}(\Omega)$.
\end{proposition}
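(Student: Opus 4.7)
\emph{Strategy.} Fix $x_0 \in \Omega$ and choose $r>0$ small enough that $\overline{B_{2r}(x_0)} \subset \Omega$ uniformly for $x_0$ in a prescribed compact set $K \Subset \Omega$. It suffices to prove that the principal value
\[
f(x_0) := 2\lim_{\varepsilon\to 0^+}\int_{\mathbb{R}^N \setminus B_\varepsilon(x_0)} \frac{J_m(u(x_0) - u(y))}{|x_0 - y|^{N+sm}}\,dy
\]
exists and is bounded by a constant depending only on $K$, the local $C^{1,\gamma}$-seminorm of $u$, and the tail integrability encoded in $\tilde W^{s,m}(\Omega)$. I would split the integration at radius $r$ and handle the \emph{near part} (where the kernel is singular and symmetry is needed) and the \emph{far part} (where integrability at infinity is needed) separately.

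\emph{Near part.} After the change of variable $z = y - x_0$, use the $z\mapsto -z$ symmetry of $B_r(0)\setminus B_\varepsilon(0)$ to write the near contribution as
\[
-\frac12\int_{B_r(0)\setminus B_\varepsilon(0)} \frac{J_m(a(z)) + J_m(b(z))}{|z|^{N+sm}}\,dz,
\]
where $a(z) = u(x_0+z) - u(x_0)$ and $b(z) = u(x_0-z) - u(x_0)$. The $C^{1,\gamma}_{loc}$ hypothesis gives $|a(z)|,|b(z)|\leq C|z|$ and, crucially, cancellation of the linear Taylor terms yields $|a(z)+b(z)|\leq C|z|^{1+\gamma}$, all uniformly for $x_0 \in K$. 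To convert this into an integrable estimate, write $J_m(a)+J_m(b) = J_m(a)-J_m(-b)$ and estimate in the two regimes: for $m\geq 2$ apply the mean-value form $|J_m(\xi)-J_m(\eta)|\leq (m-1)(|\xi|+|\eta|)^{m-2}|\xi-\eta|$ to obtain a bound of order $|z|^{m-1+\gamma}$; for $1<m<2$ apply instead the H\"older-type bound $|J_m(\xi)-J_m(\eta)|\leq C|\xi-\eta|^{m-1}$ to obtain $|z|^{(1+\gamma)(m-1)}$. Against the kernel $|z|^{-N-sm}$ these are integrable near the origin exactly when $\gamma > 1 - m(1-s)$ and $\gamma > (1-m(1-s))/(m-1)$ respectively, matching the statement; dominated convergence then gives the limit as $\varepsilon\to 0$.

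\emph{Far part.} Choose a bounded $U \Supset \Omega$ as in the definition of $\tilde W^{s,m}(\Omega)$ and split $\{|y-x_0|>r\}$ into its intersection with $U$ and its complement. Inside $U$ the kernel is pointwise bounded by $r^{-N-sm}$, and $u\in L^m(U)\subset L^{m-1}(U)$, while $|u(x_0)|$ is controlled by $\|u\|_{L^\infty(\overline{B_r(x_0)})}$ coming from $C^{1,\gamma}_{loc}(\Omega)$; so this piece is finite and uniformly bounded on $K$. Outside $U$, the estimate $|x_0-y|^{-N-sm}\leq C(1+|y|)^{-N-sm}$ holds uniformly for $x_0\in K$, reducing the integral to the tail bound $\int_{\mathbb{R}^N}|u(y)|^{m-1}(1+|y|)^{-N-sm}\,dy$ built into the $\tilde W^{s,m}$-norm. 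Combining both contributions yields a uniform bound on $|f|$ over $K$, whence $f\in L^\infty_{loc}(\Omega)$.

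\emph{Main obstacle.} The delicate step is the near part in the singular regime $1<m<2$: there the naive mean-value bound on $J_m$ fails because $|t|^{m-2}$ is unbounded near zero, and one must replace it by the sharper H\"older-continuity estimate. This is precisely what forces the threshold $\gamma > (1-m(1-s))/(m-1)$ rather than $\gamma > 1-m(1-s)$, and matching this extra gain against the kernel singularity $|z|^{-N-sm}$ is the one point at which the quantitative hypotheses on $\gamma$ are fully used.
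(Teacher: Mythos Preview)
The paper does not actually prove this proposition; it simply refers the reader to Proposition~2.12 of \cite{Ian}. Your proposal supplies precisely the argument that reference contains: split the integral at a fixed radius, symmetrize the near part to extract the second-order cancellation $|u(x_0+z)+u(x_0-z)-2u(x_0)|\le C|z|^{1+\gamma}$ coming from the $C^{1,\gamma}$ hypothesis, estimate $J_m(a)+J_m(b)$ via the mean-value inequality when $m\ge 2$ and via the $(m-1)$-H\"older continuity of $J_m$ when $1<m<2$, and handle the far part using the tail integrability built into the definition of $\tilde W^{s,m}(\Omega)$. The resulting integrability thresholds on $\gamma$ match the statement (the stray ``$p$'' in the paper's displayed condition for $m<2$ is a typo for $m$), so your argument is correct and is the standard one.

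One minor addition worth making explicit: having shown that the principal value $f(x_0)$ exists for every $x_0\in\Omega$ and that $f\in L^\infty_{loc}(\Omega)$, you should still check that $(-\Delta_m)^s u=f$ in the weak sense of Definition~\ref{DEF1}, since this is how the proposition is used later (e.g.\ in the proof of Theorem~\ref{DH1}). This follows by testing against $\varphi\in C^\infty_0(\Omega)$, symmetrizing the double integral, and applying dominated convergence to pass the $\varepsilon\to 0$ limit inside; the domination is provided by exactly the same near/far bounds you have already established.
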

\begin{proof}
See Proposition 2.12, \cite{Ian}. 
\end{proof}
\section{Interior Holder regularity}\label{SEC}
	
Now we assume that $2\leq q \leq p < \infty$ and we will prove a weak Harnack type inequality for non-negative supersolutions and then we will obtain an estimative of the oscillation of a bounded weak solution in a ball. Denote by $B_{R} = B(0;R)$ and we will continue with the notation $\mathcal{W}(\Omega) = \tilde{W}^{s,p}(\Omega) \cap \tilde{W}^{s,q}(\Omega)$. 
\begin{theorem}\label{DH1}
Let $2\leq q \leq p <\infty$ and $u \in \mathcal{W}(B_{R/3})$ satisfying weakly 
\begin{empheq}[left=\empheqlbrace]{align}
(- \Delta_{p})^{s}u + (- \Delta_{q})^{s}u &\geq -K \quad \mbox{em} \quad B_{R/3} \\ 
u(x) &\geq 0, \quad  x \in \mathbb{R}^{N} \nonumber 
\end{empheq}
for some $K\geq 0$. Then exist $\sigma \in (0,1)$ and $\overline{C} >0$ such that
\begin{equation*}
\displaystyle\inf_{B_{R/4}} u \geq \sigma\left(\displaystyle\dashint_{B_{R} \backslash B_{R/2}} u^{q-1} \dd x \right)^{1/(q-1)} - \overline{C}\left(KR^{sp}\right)^{1/(p-1)}.
\end{equation*}
\end{theorem}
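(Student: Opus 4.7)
The plan is to adapt the Ianizzotto--Mosconi--Squassina strategy \cite{Ian} for the fractional $p$-Laplacian weak Harnack inequality to the present $(p,q)$ setting. The key observation is that the $q$-Laplacian, being the less degenerate of the two operators, governs the behavior near small positive values of $u$; this is why the natural average on the right-hand side has exponent $q-1$ rather than $p-1$. Set
\[
d := (KR^{sp})^{1/(p-1)}, \qquad \bar u := u + d,
\]
so that $\bar u \geq d > 0$ in $\mathbb{R}^{N}$ and, since adding a constant does not affect either fractional operator, $\bar u$ satisfies the same weak supersolution inequality as $u$. The error $\overline C (KR^{sp})^{1/(p-1)}$ in the final bound is precisely the imprint of recovering $\inf u = \inf \bar u - d$ at the end.

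The first main step is a logarithmic estimate for $\log \bar u$ on $B_{R/3}$. Fix a cutoff $\eta \in C^{\infty}_{c}(B_{R/2})$ with $\eta \equiv 1$ on $B_{R/3}$ and $|\nabla \eta| \leq C/R$, and test the weak supersolution inequality with
\[
\varphi(x) = \eta(x)^{p}\, \bar u(x)^{1-q},
\]
which is admissible because $\bar u \geq d > 0$. The $q$-part of the operator, via Lemma \ref{Le} applied to the increasing function $g(t) = (q-1)^{-1}(d^{1-q} - t^{1-q})$ with $m = q$, produces a lower bound of the form $c\,[\log \bar u]_{s,q,B_{R/3}}^{q}$ after absorbing the $\eta$ cross-terms. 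The $p$-part is handled by splitting the double integral according to whether $\bar u(x)/\bar u(y)$ is bounded (where the contribution is controlled by the $q$-piece using $q \leq p$ and $\bar u \geq d$) or large (where the algebraic inequality \eqref{inq5} shows the integrand has a favorable sign). Tail contributions from both operators are dominated using $\bar u \geq d$, and the source term $-K \int \varphi\, \dd x$ is absorbed by our calibration of $d$ through the identity $K\, d^{1-p} R^{sp} = 1$.

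With the logarithmic estimate in hand, define the target level
\[
T := \sigma \left(\dashint_{B_{R} \setminus B_{R/2}} u^{q-1}\, \dd x \right)^{1/(q-1)}
\]
for a small $\sigma \in (0,1)$ to be chosen, together with shrinking levels $k_{j} = \tfrac{T}{2}(1 + 2^{-j})$, shrinking radii $r_{j} = \tfrac{R}{4}(1 + 2^{-j})$, and cut-offs $\zeta_{j}$ supported in $B_{r_{j}}$ with $\zeta_{j} \equiv 1$ on $B_{r_{j+1}}$ and $|\nabla \zeta_{j}| \leq C\, 2^{j}/R$. Testing the supersolution condition against $\varphi_{j} = \zeta_{j}^{p}\,(k_{j} - \bar u)_{+}^{q-1}$ and using the monotonicity of $J_{q}$ yields a Caccioppoli-type inequality controlling $[(k_{j} - \bar u)_{+}]_{s,q,B_{r_{j+1}}}^{q}$ in terms of a scaled $L^{q}$-norm on $B_{r_{j}}$ together with tail terms depending on the mean of $u^{q-1}$ on $B_{R} \setminus B_{R/2}$. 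The logarithmic estimate guarantees that the initial level set $\{ \bar u < k_{0}\} \cap B_{r_{0}}$ has sufficiently small measure provided $\sigma$ is chosen small enough relative to the structural constants, and then the fractional Sobolev embedding $W^{s,q} \hookrightarrow L^{q^{*}_{s}}$ drives a standard geometric iteration to $|\{ \bar u < T/2\} \cap B_{R/4}| = 0$. Subtracting $d$ gives the claimed inequality, with $\overline C$ and $\sigma$ determined by the structural constants.

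The main obstacle lies in the logarithmic step: the test function $\eta^{p}\bar u^{1-q}$ has a natural $q$-scaling, so the $p$-part of the operator does not plug directly into Lemma \ref{Le}. One must split the double integral based on the ratio $\bar u(x)/\bar u(y)$, track separate tail contributions from $(-\Delta_{p})^{s}$ and $(-\Delta_{q})^{s}$ with their different exponents $sp$ and $sq$, and exploit the lower bound $\bar u \geq d$ to absorb cross-terms. The non-homogeneity of $(-\Delta_{p})^{s} + (-\Delta_{q})^{s}$ also forces the shift $d = (KR^{sp})^{1/(p-1)}$ to be calibrated to the $p$-scaling of the source, which is exactly what propagates into the error term of the final weak Harnack inequality.
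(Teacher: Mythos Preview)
Your approach is fundamentally different from the paper's, and it carries a genuine gap that the paper's method sidesteps entirely.

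The paper does \emph{not} use a logarithmic estimate or a De Giorgi iteration. It gives a short barrier argument: one fixes a smooth bump $\varphi_R$ with $\varphi_R\equiv 1$ on $B_{R/4}$ and $\varphi_R\equiv 0$ outside $B_{R/3}$, sets $L(q)=\big(\dashint_{B_R\setminus B_{R/2}}u^{q-1}\big)^{1/(q-1)}$, and builds the competitor
\[
w=\sigma L(q)\,\varphi_R+\chi_{B_R\setminus B_{R/2}}u.
\]
Using Lemma~\ref{lem1} and inequality~\eqref{inq5}, one computes $(-\Delta_p)^sw+(-\Delta_q)^sw$ weakly in $B_{R/3}$: the $\varphi_R$ piece contributes $\lesssim \sigma^{q-1}L(q)^{m-1}/R^{sm}$, while the $\chi_{B_R\setminus B_{R/2}}u$ piece contributes a \emph{negative} term $\lesssim -L(m)^{m-1}/R^{sm}$. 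Since $L(q)\le L(p)$ by H\"older, for $\sigma$ small one gets $(-\Delta_p)^sw+(-\Delta_q)^sw\le -\tfrac{C_2}{2}L(q)^{p-1}/R^{sp}$ weakly in $B_{R/3}$. If $L(q)>\overline C(KR^{sp})^{1/(p-1)}$ this is $\le -K$, and since $w\le u$ in $B_{R/3}^c$, Proposition~\ref{PC} gives $w\le u$ everywhere, hence $\inf_{B_{R/4}}u\ge \sigma L(q)$. The other case is trivial. No logarithmic estimate, no iteration, no shift by $d$.

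The gap in your proposal is this: the annulus $B_R\setminus B_{R/2}$ lies entirely \emph{outside} $B_{R/3}$, the only set on which the supersolution property holds. Your logarithmic estimate (once the cutoff is corrected to be supported in $B_{R/3}$, not $B_{R/2}$ as you wrote) produces BMO-type control of $\log\bar u$ on a ball \emph{inside} $B_{R/3}$; it says nothing about how large $\bar u$ is there in terms of $L(q)$. Your claim that ``the logarithmic estimate guarantees that the initial level set $\{\bar u<k_0\}\cap B_{r_0}$ has sufficiently small measure'' for $k_0\sim\sigma L(q)$ has no mechanism behind it: the only link between the interior of $B_{R/3}$ and the annulus is the nonlocal tail, and you never explain how the tail of $u$ over $B_R\setminus B_{R/2}$ is converted into a lower bound for $|\{\bar u\ge k_0\}\cap B_{r_0}|$. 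In the paper's proof that link is made \emph{explicitly} by inserting $\chi_{B_R\setminus B_{R/2}}u$ into the barrier and invoking Lemma~\ref{lem1}; in a De~Giorgi scheme one would need a separate argument (for instance, an expansion-of-positivity step driven by the tail) which is absent from your outline. The handling of the $p$-part against a $q$-scaled test function, and the Caccioppoli step with $\zeta_j^p(k_j-\bar u)_+^{q-1}$, are also sketched too loosely to be convincing in the mixed $(p,q)$ setting, but the decisive missing piece is the interior--annulus connection.
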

\begin{proof}
Choose a function $\varphi \in C^{\infty}(\mathbb{R}^{N})$ be such that $0\leq \varphi \leq 1$ in $\mathbb{R}^N$, $\varphi \equiv 1$ in $B_{3/4}$ and $\varphi \equiv 0$ in $B_{1}^c$. By Proposition \ref{Prop1},  $\vert (-\Delta_m)^s \varphi \vert \leq C_1$ weakly in $B_1$, for $m \geq 2$. We rescale by setting $\varphi_R(x) = \varphi(3x/R),$ so $\varphi_{R} \in C^{\infty}(\mathbb{R}^N)$, $0 \leq  \varphi_{R} \leq 1$ in $\mathbb{R}^N$,\,  $\varphi_R \equiv 1$ in  $B_{R/4},\, \varphi_R \equiv 0$ in $B_{R/3}^c$  and $\vert (-\Delta_m)^s \varphi \vert \leq C_1R^{-sm}$ weakly in $B_{R/3}$.
Given $\sigma \in (0,1)$, consider
\begin{equation*}
L(m) = \left(\displaystyle\dashint_{B_{R} \backslash B_{R/2}}u^{m-1}\dd x \right)^{1/(m-1)} \ \mbox{and} \ w= \sigma L(q)\varphi_{R} + \chi_{B_R \backslash B_{R/2}}u.
\end{equation*}
Thus $w \in \mathcal{W}(B_{R/3})$, and by Lemma \ref{lem1} we have weakly in $B_{R/3}$,
\begin{align*}
(-\Delta_p&)^sw(x) + (-\Delta_q)^sw(x) = (-\Delta_p)^s(\sigma L(q) \varphi_{R}(x)) + (-\Delta_p)^s(\sigma L(q) \varphi_R(x))\\
&\,\,\, + 2 \displaystyle\sum_{m=p,q} \displaystyle\int_{B_{R} \backslash B_{R/2}}\frac{J_m\big(\sigma L(q) \varphi_{R}(x) - u(y)\big)- J_m\big(\sigma L(q) \varphi_{R}(x)\big)}{\vert x-y \vert^{N+sm}} \dd y\\
&\leq (\sigma L(q))^{p-1}(-\Delta_p)^s \varphi_{R}(x)+ (\sigma L(q))^{q-1}(-\Delta_q)^s \varphi_{R} \\
&\,\,\,+ 2 \displaystyle\sum_{m=p,q} \displaystyle\int_{B_{R} \backslash B_{R/2}}\frac{J_m\big(\sigma L(q) \varphi_{R}(x) - u(y)\big)- J_m\big(\sigma L(q) \varphi_{R}(x)\big)}{\vert x-y \vert^{N+sm}}\dd y 
\end{align*}
Thus using the inequality \eqref{inq5} results
\begin{align*}
(-\Delta_p&)^sw(x) + (-\Delta_q)^sw(x)
 \leq \frac{C_1(\sigma L(q))^{p-1}}{R^{sp}} + \frac{C_1(\sigma L(q))^{q-1}}{R^{sq}}\\
 &\,\,\ - 2^{3-p} \displaystyle\int_{B_{R} \backslash B_{R/2}} \frac{(u(y))^{p-1}}{\vert x-y \vert^{N+sp}} \dd y - 2^{3-q} \displaystyle\int_{B_{R} \backslash B_{R/2}} \frac{(u(y))^{q-1}}{\vert x-y \vert^{N+sq}} \dd y \\
& \leq \frac{C_1(\sigma L(q))^{p-1}}{R^{sp}} + \frac{C_1(\sigma L(q))^{q-1}}{R^{sq}} - \frac{C_2( L(p))^{p-1}}{R^{sp}} - \frac{C_1(L(q))^{q-1}}{R^{sq}}
\end{align*}
Applying the Holder inequality we have $L(q) \leq L(p)$, for $p\geq q$, thus, since $\sigma \in (0,1)$ result of the inequality above that
\begin{equation*}
(-\Delta_p)^sw(x) + (-\Delta_q)^sw(x) \leq \left(C_1 \sigma^{q-1} - C_2 \right) \left( \frac{(L(q))^{p-1}}{R^{sp}} +  \frac{(L(q))^{q-1}}{R^{sq}} \right) 
\end{equation*}
		
If choose $0<\sigma < \min\left\{1,\left(\frac{C_2}{2C_1} \right)^{1/(q-1)} \right\}$ we get the upper estimate
\begin{align}
(-\Delta_p)^sw(x) + (-\Delta_q)^sw(x) \leq -\frac{C_2}{2} \frac{(L(q))^{p-1}}{R^{sp}}. \label{opt}
\end{align}

We set $\overline{C}=\left(\frac{2}{C_2}\right)^{1/(p-1)}$ and distinguish two cases:

\noindent $\bullet$  If $L(q) \leq \overline{C}(KR^{sp})^{1/(p-1)}$, then
\begin{equation*}
\displaystyle\inf_{B_{R/4}}u \geq 0 \geq \sigma L(q) - \overline{C}(KR^{sp})^{1/(p-1)};
\end{equation*}
$\bullet$ If $L(q) > \overline{C}(KR^{sp})^{1/(p-1)}$ then using \eqref{opt} we obtain weakly in $B_{R/3}$
\begin{empheq}[left=\empheqlbrace]{align}
(- \Delta_{p})^{s}w(x) + (- \Delta_{q})^{s}w(x) &\leq -K \leq (- \Delta_{p})^{s}u(x) + (- \Delta_{q})^{s}u(x) \\ 
w = \chi_{B_{R}\backslash B_{R/2}}u \,\, &\geq \,\,\, u , \,\,\,  x \in B_{R/3}^c.
\end{empheq}
Using the Proposition \ref{PC}, we  obtain that $w \leq u$ in $\mathbb{R}^N$, in particular
\begin{equation*}
\displaystyle\inf_{B_{R/4}}u \geq \displaystyle\inf_{B_{R/4}}w \geq \sigma L(q)\displaystyle\inf_{B_{R/4}}\varphi_{R} = \sigma L(q) \geq \sigma L(q) - \overline{C}(KR^{sp})^{1/(p-1)}.
\end{equation*}
\end{proof}
\begin{lemma}\label{DH2}
Let $R <1$, $2\leq q \leq p < \infty$ and $u \in \mathcal{W}(B_{R/3})$  such that
\begin{empheq}[left=\empheqlbrace]{align}
(- \Delta_{p})^{s}u + (- \Delta_{q})^{s}u &\geq -K \ \ \mbox{in} \ \ B_{R/3} \\ 
u &\geq 0, \ \mbox{in} \ B_R \nonumber,
\end{empheq}
for some $K\geq 0$. If $u \in L^{\infty}(\mathbb{R}^N)$ then there exists $\sigma \in (0,1)$, $K_0 >0$,  $\overline{C} >0$  and for all $\varepsilon >0$ a constant $C_{\varepsilon}>0$ such that
\begin{align*}
\displaystyle\inf_{B_R} u \geq \sigma \left(\displaystyle\dashint_{B_{R} \backslash B_{R/2}}u^{q-1}\dd x \right)^{\frac{1}{q-1}}- \overline{C}(K_0R^{s(p-q)})^{\frac{1}{p-1}} -\varepsilon \displaystyle\sup_{B_{R}} u - C_{\varepsilon}Tail_{p}(u_{-};R).
\end{align*}
\end{lemma}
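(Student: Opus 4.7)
The strategy is to mirror the barrier/comparison argument of Theorem \ref{DH1} applied to the non-negative extension $\tilde u := u + u_-$ of $u$, which coincides with $u$ on $B_R$; the new error terms in the conclusion arise (a) from the perturbation $h$ produced by Lemma \ref{lem1} when passing from $u$ to $\tilde u$, and (b) from a shift in the barrier needed to upgrade the infimum control from $B_{R/4}$ (as in Theorem \ref{DH1}) to all of $B_R$.

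First, since $u \ge 0$ on $B_R$, the negative part $u_-$ is supported in $B_R^c$, at distance $\ge 2R/3$ from $B_{R/3}$, so Lemma \ref{lem1} applies to the perturbation $u_-$. Using the identity $u(y)+u_-(y)=0$ on $\operatorname{supp} u_-$, one gets that $\tilde u$ satisfies weakly in $B_{R/3}$
\[
(-\Delta_p)^s\tilde u + (-\Delta_q)^s\tilde u \ge -K + h(x),
\]
with
\[
h(x)=2\sum_{m=p,q}\int_{B_R^c}\frac{J_m(u(x))-J_m(u(x)+u_-(y))}{|x-y|^{N+sm}}\,\mathrm d y.
\]
For $x\in B_{R/3}$ and $y\in B_R^c$ one has $0\le u(x)\le M:=\sup_{B_R} u$ and $|x-y|\gtrsim|y|$. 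The elementary bound $|J_m(a)-J_m(a+b)|\le c_m(a^{m-2}+(a+b)^{m-2})b$ (valid for $m\ge 2$, $a,b\ge 0$) combined with Young's inequality $M^{m-2}u_-(y)\le\varepsilon^{m-1}M^{m-1}+C_\varepsilon u_-(y)^{m-1}$ produces
\[
|h(x)| \le c\sum_{m=p,q} R^{-sm}\bigl(\varepsilon^{m-1} M^{m-1}+C_\varepsilon Tail_m(u_-;R)^{m-1}\bigr).
\]

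Next I would run the barrier argument of Theorem \ref{DH1} applied to $\tilde u$, with source $-K+h$ in place of $-K$, and with the constant $\sigma L(q)$ replaced by $\sigma L(q)-\lambda$, where $\lambda:=\varepsilon M + C_\varepsilon Tail_p(u_-;R)$. This shift by $-\lambda$ is precisely what is required in order for $\tilde u \ge w$ to hold throughout the intermediate region $B_R\setminus B_{R/3}$ where, without the shift, the barrier $\sigma L(q)\varphi_R$ could exceed the only available information $u\ge 0$. Running the computation of Theorem \ref{DH1} verbatim and using the hypothesis $R<1$ to fold the $q$-scaling as $R^{-sq}=R^{s(p-q)}\cdot R^{-sp}$ into the $p$-scaling yields the analogue of inequality \eqref{opt} with an absorbing threshold of the form $L(q)>\overline C (K_0 R^{s(p-q)})^{1/(p-1)}$, where $K_0$ depends on $K$ and $M$ and absorbs the Tail-independent part of $\|h\|_\infty$. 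Proposition \ref{PC} then gives $\tilde u\ge w$ on $\mathbb R^N$, whence
\[
\inf_{B_R} u = \inf_{B_R}\tilde u \ge \sigma L(q)-\lambda,
\]
which combined with the threshold is the claimed estimate; the complementary case (threshold violated) gives the conclusion trivially from $u\ge 0$ on $B_R$.

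The main obstacle I expect is the mismatch of the natural scales $R^{-sp}$ and $R^{-sq}$ that appear in the $p$- and $q$-pieces of both the barrier computation and the bound on $|h|$: reconciling them into a single penalty of the form $R^{s(p-q)/(p-1)}$ is exactly what forces the hypothesis $R<1$ and requires careful absorption of the $q$-piece into the $p$-piece. A subsidiary difficulty is calibrating the shift $\lambda$ so that it is large enough to make $\tilde u\ge w$ on $B_{R/3}^c$ without destroying the cancellation in the Theorem \ref{DH1} estimate for $w$ inside $B_{R/3}$; this is precisely where the $\varepsilon\sup_{B_R} u$ slack and the Young-type splitting of Step (a) enter the final bound.
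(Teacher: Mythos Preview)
Your step~(a) is exactly the paper's argument: apply Lemma~\ref{lem1} with $v=u_-$ to obtain a differential inequality for $u_+=u+u_-\ge0$ in $B_{R/3}$, and bound the correction $h$ from below. The paper uses inequality~\eqref{inq4} rather than your mean-value-plus-Young splitting, but the outcome is the same pointwise estimate; the $q$-tail $Tail_q(u_-;R)$ is then controlled independently of $R$ via Remark~\ref{OBSFUN}, which is why $K_0$ depends on $\|u\|_{L^\infty(\mathbb R^N)}$ and not merely on $\sup_{B_R}u$. After this the paper does \emph{not} re-run the barrier: it simply records $(-\Delta_p)^s u_++(-\Delta_q)^s u_+\ge-\tilde K$ for an explicit constant $\tilde K$ and invokes Theorem~\ref{DH1} as a black box. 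The error terms $\varepsilon\sup_{B_R}u$ and $C_\varepsilon Tail_p(u_-;R)$ then fall out by expanding $(\tilde K R^{sp})^{1/(p-1)}$, with the hypothesis $R<1$ used only to write $R^{sp}\le R^{s(p-q)}$.

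Your step~(b), by contrast, is both unnecessary and incorrectly argued. The $\inf_{B_R}$ in the displayed conclusion is a typo for $\inf_{B_{R/4}}$: the paper's own proof ends with $\inf_{B_{R/4}}u=\inf_{B_{R/4}}u_+$, and Theorem~\ref{oscil} applies the lemma at scale $R_j$ to read off $\inf_{B_{j+1}}$ with $R_{j+1}=R_j/4$. More importantly, the shift $\sigma L(q)\mapsto\sigma L(q)-\lambda$ cannot enlarge the region on which the barrier yields a lower bound: that region is $\{\varphi_R\equiv1\}=B_{R/4}$, determined by the cutoff and unchanged by any vertical shift. The comparison of Proposition~\ref{PC} is carried out on $B_{R/3}$, the only set on which the differential inequality is available; on $B_R\setminus B_{R/3}$ you never obtain $\tilde u\ge w$ from comparison, so the claimed conclusion $\inf_{B_R}\tilde u\ge\sigma L(q)-\lambda$ does not follow. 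Your stated rationale for the shift --- that $\sigma L(q)\varphi_R$ might exceed the information $u\ge0$ on $B_R\setminus B_{R/3}$ --- is also off, since $\varphi_R\equiv0$ there and the boundary comparison $u_+\ge w$ on $B_{R/3}^c$ holds without any adjustment.
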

\begin{proof}
Let us apply the Lemma \ref{lem1}  for the functions $u$ and $v = u_{-}$,  so that $u_{+} = u+v$ and $\Omega = B_{R/3}$. Then have in a weak sense in $B_{R/3}$
\begin{align*}
(-\Delta_p)^s &u_{+}(x) + (-\Delta_q)^s u_{+}(x) = (-\Delta_p)^s u(x) + (-\Delta_q)^s u(x) \\
& \,\,\, + 2 \displaystyle\sum_{m=p,q} \displaystyle\int_{B_{R/3}^c} \frac{J_m(u(x)-u(y)-u_{-}(y)) - J_m(u(x) - u(y))}{\vert x-y \vert^{N+sm}} \dd y \\
&\geq -K + C\displaystyle\sum_{m=p,q} \displaystyle\int_{ \{u<0\}} \frac{(u(x))^{m-1} - (u(x)-u(y))^{m-1}}{\vert x-y \vert^{N+sm}} \dd y \\
&\geq -K - C\displaystyle\sum_{m=p,q} \displaystyle\int_{ \{u<0\}} \frac{(u(x)-u(y))^{m-1} - (u(x))^{m-1} }{\vert y \vert^{N+sm}} \dd y 
\end{align*}
where in the end was used that $\vert x-y\vert \geq \frac{2}{3} \vert y \vert$, for all $y \in \{u<0 \} \subset B_{R}^c$ and $x \in B_{R/3}$. By inequality \eqref{inq4}, for any $\theta >0$ exists $C_{\theta}>0$ such that weakly in $B_{R/3}$
\begin{align*}
\big((-\Delta_p)^s &+ (-\Delta_q)^s \big) u_{+}(x) \geq -K\!\! - C\!\!\displaystyle\sum_{m=p,q} \displaystyle\int_{ \{u<0\}}\!\!\frac{ \theta (u(x))^{m-1} - C_{\theta}(u(y))^{m-1}}{ \vert y \vert^{N+sm}} \dd y \\
&\geq -K - C \theta\!\! \displaystyle\sum_{m=p,q} (\displaystyle\sup_{B_R})^{m-1} \displaystyle\int_{B_R^c}\!\!\frac{1}{\vert y \vert^{N+sm}} \dd y - C_{\theta}\!\!\displaystyle\sum_{m=p,q}\displaystyle\int_{B_R^c}\!\!\frac{(u(y))^{m-1}}{ \vert y \vert^{N+sm}} \dd y \\
&\geq -K - \frac{C\theta}{R^{sp}} \left(\displaystyle\sup_{B_R}u\right)^{p-1} - \frac{C_{\theta}}{R^{sp}} \left(Tail_{p}(u_{-};R)\right)^{p-1} \\
&\,\,\,\,\,- \frac{C\theta}{R^{sq}} \left(\displaystyle\sup_{B_R}u\right)^{q-1}  - \frac{C_{\theta}}{R^{sq}} \left(Tail_{q}(u_{-};R)\right)^{q-1} \\
&: = -\tilde{K}.
\end{align*}
Using the Remark \ref{OBSFUN} we can see that $Tail_q(u_{-};R) \leq C_0$, where $C_0$ is independent of $R>0$, we also have that $R^{sp} \leq R^{s(p-q)}$ for $R\in(0,1]$, since $q\leq p$.
Thus,
\begin{align*}
\tilde{K}&R^{sp} \leq KR^{sp} + C \theta \left(\displaystyle\sup_{B_{R}} u\right)^{p-1} + C_{\theta} \left(Tail_{p}(u_{-};R)\right)^{p-1} \\
&\,\,\,\, + C R^{s(p-q)} \theta\left(\displaystyle\sup_{B_{R}} u\right)^{q-1}  + C_{\theta}R^{s(p-q)}\left(Tail_{q}(u_{-};R))^{\frac{q-1}{p-1}}\right)^{q-1} \\
&\leq KR^{s(p-q)} + C \theta \left(\displaystyle\sup_{B_{R}} u\right)^{p-1} \!\!+ C_{\theta} \left(Tail_{p}(u_{-};R)\right)^{p-1} \!\!+ \left(C \theta + C_{\theta}\right) M_0R^{s(p-q)}.
\end{align*}
where $M_0 >0$ is a constant independent of $R>0$, that depend of $\|u\|_{L^{\infty}(\mathbb{R}^{N})}$.
		
Consequently, given $\varepsilon > 0$  take $\theta < \min\left\{ 1, \frac{\varepsilon}{C^{p-1}} \right\}$ we have
\begin{align*}
\left(\tilde{K}R^{sp}\right)^{\frac{1}{p-1}} \leq (K_0R^{s(p-q)})^{\frac{1}{p-q}} + \varepsilon \displaystyle\sup_{B_R} u + C_{\varepsilon} Tail_{p}(u_{-};R)
\end{align*}
where $K_0= K_0(K,\|u\|_{L^{\infty}(\mathbb{R}^{N})})>0$ is independent of $R>0$.
		
Therefore, applying the Lemma \ref{DH1} for $u_+$ results
\begin{align*}
\displaystyle\inf_{B_{R/4}} &= \displaystyle\inf_{B_{R/4}} u_{+}\geq \sigma \left(\displaystyle\dashint_{B_{R} \backslash B_{R/2}}u^{q-1}\dd x \right)^{1/(q-1)} - (\tilde{K}R^{sp})^{\frac{1}{p-1}} \\
&\geq \sigma \left(\displaystyle\dashint_{B_{R} \backslash B_{R/2}}u^{q-1}\dd x \right)^{\frac{1}{q-1}}\!\! - (K_0R^{s(p-q)})^{\frac{1}{p-1}} - \varepsilon \displaystyle\sup_{B_R} u - C_{\varepsilon} Tail_{p}(u_{-};R).
\end{align*}
\qed
\end{proof}
	
Now we use the above results to produce an estimate of the oscillation of a bounded function
$u$ such that $(-\Delta_p)^su + (-\Delta_q)^su$ is locally bounded. We set for all $R>0$, $x_0 \in \mathbb{R}^N$
\[
Q(u;x_0;R) = \vert \vert u \vert \vert_{L^{\infty}(B_{R}(x_0)} + Tail_{p}(u;x_0;R), \ \ \ Q(u;R) = Q(u;0;R).
\]
	
\begin{theorem}\label{oscil}
Let $u \in \mathcal{W}(B_{R_0}) \cap L^{\infty}(\mathbb{R}^{N})$ a function such that for some $K\geq 0$ and $R_0 \in (0,1]$ we have 
\begin{equation*}
\vert (-\Delta_p)^s u + (-\Delta_q)^su \vert \leq K \,\, \mbox{weakly in} \,\, B_{R_0}.
\end{equation*}
Then exists $\alpha \in (0,1)$ and $C>0$ such that
\begin{align*}
\underset{B_r}{\osc}u \leq C \left[\left(K_0R_0^{s(p-q)}\right)^{\frac{1}{p-1}} + Q(u;r_0)\right] \left(\frac{r}{R_0}\right)^{\alpha} 
\end{align*}where $K_0 >0$ is given in the Lemma \ref{DH2}.
\end{theorem}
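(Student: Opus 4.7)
The plan is to prove the oscillation estimate by iterating the weak Harnack-type inequality of Lemma \ref{DH2} on a dyadically shrinking family of balls, in the classical De Giorgi style adapted to the nonlocal setting. Set $r_j = R_0/4^j$, $M_j = \sup_{B_{r_j}} u$, $m_j = \inf_{B_{r_j}} u$, $\omega_j = M_j - m_j$, and put
\[
A := \left(K_0 R_0^{s(p-q)}\right)^{1/(p-1)} + Q(u;R_0).
\]
The goal is to show inductively that $\omega_j \leq C A\, 4^{-j\alpha}$ for some universal $C$ and a small $\alpha \in (0,1)$; since any $r \in (0,R_0]$ lies between consecutive $r_{j+1} \leq r \leq r_j$, this immediately yields the stated Hölder-type estimate.

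At each step, compare $u$ with the midlevel $\ell_j = (M_j + m_j)/2$ on the annulus $A_j = B_{r_j}\setminus B_{r_j/2}$: one of the two level sets has measure at least $|A_j|/2$, and accordingly I set $v_j = u - m_j$ or $v_j = M_j - u$. Using $(-\Delta_m)^s(-u) = -(-\Delta_m)^s u$ and the hypothesis $|(-\Delta_p)^s u + (-\Delta_q)^s u|\leq K$, the chosen $v_j$ is non-negative on $B_{r_j}$, belongs to $L^\infty(\mathbb{R}^N)$, and satisfies $(-\Delta_p)^s v_j + (-\Delta_q)^s v_j \geq -K$ weakly on $B_{r_j}$. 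Moreover, since $v_j \geq \omega_j/2$ on a set of measure at least $|A_j|/2$, the density estimate
\[
\left(\dashint_{A_j} v_j^{q-1}\, \dd x\right)^{1/(q-1)} \geq c_0\, \omega_j
\]
holds with a universal $c_0 > 0$. Applying Lemma \ref{DH2} to $v_j$ with $R = r_j$ and an $\epsilon > 0$ to be fixed later, and noting that $\omega_{j+1} \leq \omega_j - \inf_{B_{r_{j+1}}} v_j$ under either alternative, I obtain the recursion
\[
\omega_{j+1} \leq \lambda\, \omega_j + \bar C\left(K_0 r_j^{s(p-q)}\right)^{1/(p-1)} + C_\epsilon\, Tail_p((v_j)_-; r_j),\qquad \lambda := 1 - \sigma c_0 + \epsilon.
\]
Choosing $\epsilon$ small forces $\lambda \in (0,1)$.

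The key technical step is estimating the nonlocal tail in a way that is compatible with the induction. Splitting the integral dyadically,
\[
Tail_p((v_j)_-; r_j)^{p-1} = r_j^{sp}\left(\sum_{k=0}^{j-1}\int_{B_{r_k}\setminus B_{r_{k+1}}} + \int_{B_{R_0}^c}\right)\frac{|(v_j)_-(y)|^{p-1}}{|y|^{N+sp}}\, \dd y,
\]
on each dyadic annulus $B_{r_k}\setminus B_{r_{k+1}}$ the difference $|u(y) - m_j|$ (resp.\ $|M_j - u(y)|$) is controlled by $\omega_k$, and the geometric integral gives $\int_{B_{r_k}\setminus B_{r_{k+1}}}|y|^{-N-sp}\dd y \asymp r_k^{-sp}$. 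For the far integral over $B_{R_0}^c$, the bound $|(v_j)_-(y)| \leq |u(y)| + \|u\|_{L^\infty(B_{R_0})}$ combined with the inclusion $r_j \leq R_0$ yields a contribution of order $(r_j/R_0)^{sp}\, Q(u;R_0)^{p-1}$. Using the inductive hypothesis $\omega_k \leq CA\cdot 4^{-k\alpha}$ for $k \leq j$, a direct summation gives $Tail_p((v_j)_-; r_j) \leq C'A \cdot 4^{-j\alpha}$ provided $\alpha$ is small.

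Feeding this back into the recursion, the induction step reduces to the arithmetic inequality
\[
CA \cdot 4^{-(j+1)\alpha} \geq \lambda\, C A\cdot 4^{-j\alpha} + \bar C A \cdot 4^{-j\, s(p-q)/(p-1)} + C_\epsilon C' A\cdot 4^{-j\alpha},
\]
which is arranged by first selecting $\alpha \in (0,\min\{1,\, s(p-q)/(p-1)\})$ with $\lambda \cdot 4^\alpha < 1$, then taking $\epsilon$ small enough, and finally $C$ large enough to absorb the initial value $\omega_0 \leq 2\|u\|_{L^\infty(B_{R_0})} \leq 2A$. I expect the main obstacle to be precisely this tail control: unlike the local case, the error $C_\epsilon Tail_p((v_j)_-; r_j)$ does not decay in $j$ without invoking the inductive hypothesis itself, and one must verify that the self-referential feedback through the tail does not spoil the geometric decay $\omega_j \lesssim 4^{-j\alpha}$. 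The two-phase structure adds a further bookkeeping layer, as the mismatch between $p$ and $q$ enters explicitly through the exponent $s(p-q)/(p-1)$ that caps the admissible Hölder exponent.
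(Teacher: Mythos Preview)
Your overall strategy is right and matches the paper: iterate Lemma~\ref{DH2} on the dyadic balls $B_{r_j}$, split the tail over the intermediate annuli and the complement of $B_{R_0}$, and close an induction giving geometric decay of the oscillation. One cosmetic difference: rather than your midlevel alternative, the paper applies Lemma~\ref{DH2} simultaneously to $M_j-u$ and to $u-m_j$ and adds, using Jensen to get $\sigma(M_j-m_j)$ on the left; this avoids the density step but yields the same type of recursion.

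The gap is exactly the self-referential tail control you flag, and your proposed order of choices does not resolve it. With the true oscillations $\omega_k$ and the annular bound $(v_j)_-\leq \omega_k$ on $B_{r_k}\setminus B_{r_{k+1}}$, feeding $\omega_k\leq CA\,4^{-k\alpha}$ into the dyadic sum gives $Tail_p((v_j)_-;r_j)\leq C_{\mathrm{tail}}(\alpha)\,CA\,4^{-j\alpha}+(\text{far term})$, where $C_{\mathrm{tail}}(\alpha)^{p-1}=c\sum_{i\geq 1}4^{i[(p-1)\alpha-sp]}$ tends to a \emph{positive} constant as $\alpha\to 0$. The induction step then demands $4^{-\alpha}>(1-\sigma c_0+\varepsilon)+C_\varepsilon C_{\mathrm{tail}}(\alpha)$, and nothing makes $C_\varepsilon C_{\mathrm{tail}}(\alpha)$ small: shrinking $\varepsilon$ blows up $C_\varepsilon$, shrinking $\alpha$ does not shrink $C_{\mathrm{tail}}$, and enlarging $C$ is useless because the offending term scales with $C$. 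The paper breaks this circularity by working not with the true $\sup/\inf$ but with \emph{constructed} monotone sequences $m_j\leq \inf_{B_j}u\leq \sup_{B_j}u\leq M_j$ whose gap is \emph{prescribed}: $M_j-m_j=\lambda R_j^\alpha$. Then on $B_{r_k}$ one has $(M_j-u)_-\leq M_k-M_j=(m_k-m_j)+\lambda(R_k^\alpha-R_j^\alpha)\leq \lambda(R_k^\alpha-R_j^\alpha)$, so the annular bound carries the extra factor $(4^{(j-k)\alpha}-1)$, and the resulting coefficient $S(\alpha)=\sum_{i\geq 1}(4^{i\alpha}-1)^{p-1}4^{-(i-1)sp}\to 0$ as $\alpha\to 0$. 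One then fixes $\varepsilon=\sigma/4$ (so $C_\varepsilon$ is universal), chooses $\alpha$ small so that $4^\alpha\big[(1-\sigma/2)+C_\varepsilon S(\alpha)^{1/(p-1)}\big]<1-\sigma/4$, and finally fixes the free parameter $\lambda$ to absorb the remaining (non-self-referential) terms. Replacing your actual $\omega_j$ by such prescribed-gap sequences is the missing ingredient.
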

\begin{proof}
Recall that $2\leq q \leq p <\infty$. For all integer $j \geq 0$  we set $R_j = \frac{R_0}{4^j}, B_j = B_{r_j}$ and $\frac{1}{2} B_j = B_{R_j/2}$. We claim that there are $\alpha \in (0,1)$ and $\lambda>0$, a non-decreasing sequence $(m_j)$ and a non-increasing sequence $(M_j)$, such that
\begin{align*}
m_j \leq \displaystyle\inf_{B_j}u \leq \displaystyle\sup_{B_j} \leq M_j, \  M_j - m_j = \lambda R_{j}^{\alpha}, \quad \mbox{for any} \quad j \geq 0.
\end{align*}		
We argue by induction on $j$. \\
Step zero: We set $M_0 = \displaystyle\sup_{B_{R_0}} u$ and $m_0=M_0-\lambda R_j^{\alpha}$, where $0 < \lambda < \frac{2 \vert \vert u \vert \vert_{L^\infty(B_{R_0})}}{R_0^{\alpha}}$.
		
Inductive step: Assume that sequences $(m_j)$ and $(M_j)$ are constructed up to the index $j$. Then
\begin{align*}
M_j - m_j &= \displaystyle\dashint_{B_{R} \backslash B_{R/2}}(M_j-u)\dd x + \displaystyle\dashint_{B_{R} \backslash B_{R/2}}(u-m_j)\dd x  \\
&\leq \left(\displaystyle\dashint_{B_{R} \backslash B_{R/2}}(M_j - u)^{q-1}\dd x\right)^{\frac{1}{q-1}} + \left(\displaystyle\dashint_{B_{R} \backslash B_{R/2}}(u - m_j)^{q-1}\dd x\right)^{\frac{1}{q-1}}
\end{align*}
Note that as $(M_j)$ is non-increasing and $(m_j)$ is non-decreasing, we have $M_{j} -u$ and $u-m_{j}$ are bounded in $\mathbb{R}^{N}$, moreover for all $j \geq 0$ we have
\[
M_{j} - u  \leq M_0 - u \ \ \mbox{and} \ \ u-m_{j} \leq u-m_0.
\]
Let $\sigma \in (0,1)$, $\tilde{C} > 0$ be as in Lemma \ref{DH2}, and multiply the previous inequality by $\sigma$ to obtain, via Lemma \ref{DH2}
\begin{align*}
\sigma (M_j-m_j)
&\leq \displaystyle\inf_{B_{j+1}}(M_j-u) + \displaystyle\inf_{B_{j+1}}(u-m_j) + 2\overline{C}(K_0R_j^{s(p-q)})^{\frac{1}{p-1}}  \\
& \,\,\,\,+ \varepsilon \left[\displaystyle\sup_{B_j}(M_j-u) + \displaystyle\sup_{B_j}(u-m_j)\right] + C_{\varepsilon}Tail_{p}((M_j-u)_{-};R_j)\\
&\,\,\,\,+ C_{\varepsilon}Tail_{p}((u-m_j)_{-};R_j) 
\end{align*}
Setting universally $\varepsilon = \frac{\sigma}{4}, \ \ C= \max\{2\tilde{C},C_{\varepsilon}\}$ and rearranging, we have
\begin{align*}
\underset{B_{j+1}}{\osc}&u \leq (1-\frac{\sigma}{2})(M_{j} - m_{j}) \\
&+ C\left[\left(K_{0}R_{0}^{s(p-q)}\right)^{\frac{1}{p-1}} + Tail_{p}((M_j-u)_{-};R_j) + Tail_{p}((u-m_j)_{-};R_j)\right]
\end{align*}
In the   proof of the Theorem $5.4$ in \cite{Ian} we provide an estimate of both non-local tails, 
\[
Tail_{p}((M_{j} - u)_{-};R_j)  \leq C\left[\lambda S(\alpha)^{\frac{1}{p-1}} + \frac{Q(u;R_0)}{R_{0}^{\alpha}} \right]R_{j}^{\alpha}, \,\,\, S(\alpha) \rightarrow 0 \,\,\, \mbox{as} \,\, \alpha \rightarrow 0
\]
the same being valid for $Tail_{p}((u-m_{j})_{-})$. 
Therefore
\[
\underset{B_{j+1}}{\osc}u \leq \left(1-\frac{\sigma}{2}\right)(M_{j} - m_{j}) + C\left(K_{0}R_{0}^{s(p-q)}\right)^{\frac{1}{p-1}}\!\! + C\left[\lambda S(\alpha)^{\frac{1}{p-1}} +\!\! \frac{Q(u;R_0)}{R_{0}^{\alpha}} \right]R_{j}^{\alpha}.
\]
Recalling that $M_{j}-m_{j} = \lambda R_{j}^{\alpha}$ and $R_{j} = \frac{R_0}{4^{j}}$, follows that
\begin{align*}
\underset{B_{j+1}}{ \osc}u &\leq 4^{\alpha} \left[\left(1-\frac{\sigma}{2}\right) + CS(\alpha)^{\frac{1}{p-1}} \right]\lambda R_{j+1}^{\alpha} \\
&\,\,\,\,\,\,+ 4^{\alpha}C \left[K_{0}^{\frac{1}{p-1}}R_{j}^{\frac{s(p-q)}{p-1}-\alpha} + \frac{Q(u;R_{0})}{R_{0}^{\alpha}} \right]R_{j+1}^{\alpha}.
\end{align*}		
Now we choose $\alpha \in \left(0,\frac{s(p-q)}{p-1}\right)$ universally such that
\[
4^{\alpha}\left[ \left(1-\frac{\sigma}{2} \right) + CS(\alpha)^{\frac{1}{p-1}} \right] < \left(1-\frac{\sigma}{4}\right)
\]
which is possible because $S(\alpha) \rightarrow 0$ as $\alpha \rightarrow 0$. Now, setting
\begin{align}
\lambda = \frac{4^{\alpha + 1}}{\sigma} C\left[K_{0}^{\frac{1}{p-1}}R_{0}^{\frac{s(p-q)}{p-1} - \alpha} + \frac{Q(u;R_0)}{R_0^{\alpha}} \right]. \label{Estimativa}
\end{align}
we have $\lambda \geq \frac{2\vert \vert u \vert \vert_{L^{\infty}(B_{R_0}(x_0))}}{R_0^{\alpha}}$, since that $4^{\alpha + 1}C/\sigma > 2$ and
\[
\underset{B_{j+1}}{\osc} u \leq (1-\frac{\sigma}{4})\lambda R_{j+1}^{\alpha} + \frac{\sigma}{4}\lambda R_{j+1}^{\alpha}.
\]
We may pick $m_{j+1}$, $M_{j+1}$ such that
\[
m_{j} \leq m_{j+1} \leq \displaystyle\inf_{B_{j+1}} u \leq \displaystyle\sup_{B_{j+1}} u  \leq M_{j+1} \leq M_{j}, \ \ M_{j+1} -m_{j+1} = \lambda R_{j+1}^{\alpha},
\]
which completes the induction and proves the claim.
		
Now fix $r \in (0,R_{0})$ and find an integer $j \geq 0$ such that $R_{j+1} \leq r \leq R_{j}$, thus $R_{j} \leq 4r$. Hence, by the claim and \eqref{Estimativa}, we have
\[
\underset{B_r}{\osc} \leq \underset{B_{j}}{\osc} \leq \lambda R_{j}^{\alpha} \leq C\left[(K_{0}R_{0}^s(p-q))^{\frac{1}{p-1}} + Q(u;R_0)\right] \left(\frac{r}{R_{0}}\right)^{\alpha},
\]
which concludes the argument.
\end{proof}
	
\begin{corollary}\label{CONT}
Let $u \in \mathcal{W}(B_{2R_{0}}(x_0)) \cap L^{\infty}(\mathbb{R}^{N})$ such that for some $K \geq 0$ and $R_{0} \in (0,1]$ we have 
\[
\left\vert (-\Delta_{p})^{s} u + (-\Delta_{q})^{s} u\right\vert \leq K \,\,
\mbox{weakly in} \,\, B_{2R_{0}}(x_0).
\] 
Then there exists $C>0$ and $\alpha \in (0,1)$ such that
\[
\vert u \vert_{C^{0,\alpha}(B_{R_{0}}(x_{0})} \leq C\left[(K_{0}R_{0}^s(p-q))^{\frac{1}{p-1}} + Q(u;x_{0};2R_0)\right] R_{0}^{-\alpha}
\]
\end{corollary}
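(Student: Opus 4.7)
The plan is to deduce the Hölder seminorm bound from the centered oscillation estimate in Theorem \ref{oscil} via the standard passage ``decay of oscillations $\Longrightarrow$ Hölder regularity,'' after noting that Theorem \ref{oscil} is translation invariant. Since the operators $(-\Delta_p)^s$ and $(-\Delta_q)^s$ commute with translations, one may apply Theorem \ref{oscil} around any point $x\in B_{R_0}(x_0)$: the inclusion $B_{R_0}(x)\subset B_{2R_0}(x_0)$ ensures that the hypothesis $|(-\Delta_p)^s u+(-\Delta_q)^s u|\le K$ passes to $B_{R_0}(x)$, and the membership $u\in\mathcal{W}(B_{2R_0}(x_0))\cap L^\infty(\mathbb{R}^N)$ is strong enough to guarantee $u\in\mathcal{W}(B_{R_0}(x))$. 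Hence, for every such $x$,
\[
\underset{B_r(x)}{\osc}\,u \;\le\; C\bigl[(K_0 R_0^{s(p-q)})^{1/(p-1)}+Q(u;x;R_0)\bigr]\Bigl(\frac{r}{R_0}\Bigr)^{\alpha}, \qquad 0<r\le R_0.
\]

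Next I would take arbitrary $x,y\in B_{R_0}(x_0)$, set $r=|x-y|$, and split into two cases. If $r<R_0$, then $y\in B_r(x)$ and the displayed inequality gives directly
\[
\frac{|u(x)-u(y)|}{|x-y|^{\alpha}} \;\le\; C\,R_0^{-\alpha}\bigl[(K_0 R_0^{s(p-q)})^{1/(p-1)}+Q(u;x;R_0)\bigr].
\]
If $r\ge R_0$, a trivial estimate yields $|u(x)-u(y)|/|x-y|^{\alpha}\le 2\|u\|_{L^\infty(B_{R_0}(x_0))}R_0^{-\alpha}$, which is already controlled by the right-hand side of the claim. It remains to pass from the ``local'' quantity $Q(u;x;R_0)$ to the ``global'' one $Q(u;x_0;2R_0)$.

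The only step that requires a small computation is the tail comparison. Since $B_{R_0}(x)\subset B_{2R_0}(x_0)$, one has $\|u\|_{L^\infty(B_{R_0}(x))}\le\|u\|_{L^\infty(B_{2R_0}(x_0))}$. For the nonlocal tail centered at $x$ with radius $R_0$, I would split $B_{R_0}(x)^c$ into the annulus $B_{2R_0}(x_0)\setminus B_{R_0}(x)$ and the far region $B_{2R_0}(x_0)^c$. On the annulus, $|u|$ is bounded by $\|u\|_{L^\infty(B_{2R_0}(x_0))}$ and the kernel integrates to a constant multiple of $R_0^{-sp}$, contributing $C\|u\|_{L^\infty(B_{2R_0}(x_0))}^{p-1}$ after multiplication by $R_0^{sp}$. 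On the far region, $|x-x_0|\le R_0$ combined with $|y-x_0|\ge 2R_0$ gives $|x-y|\ge\tfrac12|x_0-y|$, so the integral is bounded (up to a dimensional constant) by $R_0^{sp}(2R_0)^{-sp}\,\mathrm{Tail}_p(u;x_0;2R_0)^{p-1}$. Extracting the $(p-1)$-th root and recombining yields
\[
Q(u;x;R_0)\;\le\; C\,Q(u;x_0;2R_0),
\]
with $C$ depending only on $N,s,p$. Substituting this bound into the two-case estimate above, taking the supremum over $x,y\in B_{R_0}(x_0)$ with $x\neq y$, and absorbing constants gives the claimed Hölder seminorm estimate.

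The argument is essentially a packaging step; I do not expect genuine obstacles, but the only point requiring care is the tail comparison between different centers and radii, where one must track how the factor $R_0^{sp}$ interacts with the shift from $x_0$ to $x$ and the change of radius from $2R_0$ to $R_0$. This is where the hypothesis $x\in B_{R_0}(x_0)$ (half the outer radius) is used in an essential way, through the inequality $|x-y|\ge\tfrac12|x_0-y|$ on the far region.
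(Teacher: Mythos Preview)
Your proposal is correct and follows essentially the same route as the paper's own proof: apply the (translation-invariant) oscillation estimate of Theorem \ref{oscil} on $B_{R_0}(x)$ for $x\in B_{R_0}(x_0)$, then compare $Q(u;x;R_0)$ with $Q(u;x_0;2R_0)$ by splitting the tail integral into the annulus $B_{2R_0}(x_0)\setminus B_{R_0}(x)$ and the far region $B_{2R_0}(x_0)^c$, using $|x-y|\ge\tfrac12|x_0-y|$ on the latter. Your explicit treatment of the case $|x-y|\ge R_0$ is in fact a little more careful than the paper, which tacitly assumes $|x-y|\le R_0$.
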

\begin{proof}
Given $x,y \in B_{R_{0}}(x_{0})$, let $r=\vert x-y \vert \leq R_{0}$. let us apply the Theorem \ref{oscil} to the ball $B_{R_{0}}(x) \subset  B_{2R_{0}}(x_{0})$. Clearly $\left\Vert u \right\Vert_{L^{\infty}(B_{R_{0}}(x))} \leq \left\Vert u \right\Vert_{L^{\infty}(B_{2R_{0}}(x_0))}$ and
\begin{align*}
Tail_{p}(u;x;R_{0})^{p-1} &= R_{0}^{sp}\displaystyle\int_{B_{R_{0}}^{c}(x)} \frac{\vert u(y) \vert^{p-1}}{\vert x-y \vert^{N+sp}} \dd y \\
&\leq C \left\Vert u \right\Vert_{L^{\infty}(B_{2R_{0}}(x_0))}^{p-1} + CR_{0}^{sp}\displaystyle\int_{B_{2R_{0}}^{c}(x_{0})} \frac{\vert u(y) \vert^{p-1}}{\vert x_{0}-y \vert^{N+sp}} \dd y
\end{align*}
for a universal $C$, where as usual we used $\vert x-y\vert \geq \vert x_{0} - y\vert/2$ for $y \in B_{2R_{0}}^{c}(x_{0})$ and $x \in B_{R_{0}}(x)$. This implies that,
\[
Q(u;x;R_{0}) \leq CQ(u;x_{0};2R_{0})
\]
and thus the desired estimate on the Holder seminorm.
\qed
\end{proof}
	
\section{Boundedness of solutions - a general procedure}\label{bound}
Given an $f \in L^{\frac{p^{*}_{s}}{p^{*}_s-1}}(\mathbb{R}^{N})$, consider the problem
\begin{equation}\label{P_2}\left\{
\begin{array}{rcll}
(- \Delta_{p})^{s}u + (- \Delta_{q})^{s}u =f \,\,\,\, \mbox{in} \,\,\,\, \mathbb{R}^{N} \\ 
u(x)  \geq 0 ,  \,\,\,\, x \in \mathbb{R}^{N}  \end{array} \right.
\end{equation}
where $0 < s < 1$, $N > sp$, $p^{*}_{s} = \frac{Np}{N-sp}$ and $1 < q \leq p < \infty$. We will denote by
\begin{equation*}
\mathcal{W} := D^{s,p}(\mathbb{R}^{N}) \cap D^{s,q}(\mathbb{R}^{N})
\end{equation*}
which is a Banach space with the induced norm
\begin{equation*}
\|u\|_{\mathcal{W}} = \|u\|_{s,p} + \|u\|_{s,q}.
\end{equation*}	
\begin{definition}\label{DEF}
 We say that $u \in \mathcal{W}$ is a weak solution of  \eqref{P_2} if
\begin{align*}
&\displaystyle\int_{\mathbb{R}^N}\!\!\int_{\mathbb{R}^{N}} \!\!\left( \frac{\vert u(x) - u(y) \vert^{p-2}}{\vert x-y\vert^{N+ sp}} + \frac{ \vert u(x) - u(y) \vert^{q-2}}{\vert x-y\vert^{N+ sq}}\right)\!\!(u(x) - u(y))(\varphi(x) - \varphi(y)) \dd x\dd y \\
&= \displaystyle\int_{\mathbb{R}^N} f\varphi \dd x, \ \ \ \mbox{for all} \ \varphi \in \mathcal{W}.
\end{align*}
\end{definition}
The following remark is a direct consequence of the spaces involved and is the key to concluding the continuity of the solutions of \eqref{P_2}.
\begin{remark}\label{REM1}
$1)$ Note that, $\mathcal{W} \subseteq \mathcal{W}(\Omega)$ for any $\Omega \subset \mathbb{R}^{N}$ a bounded domain.\\
$2)$The condition $f \in L^{\frac{p^{*}_{s}}{p^{*}_s-1}}(\mathbb{R}^{N})$ is important so that the functional \linebreak$\varphi \mapsto \displaystyle\int_{\mathbb{R}^{N}} f\varphi \dd x$ is well defined for any 
$\varphi \in \mathcal{W}$.
\end{remark}
	
\textbf{Proof of theorem \ref{BOUNDED}}. By Remark \ref{REM1}, if $u \in \mathcal{W}$ satisfies \eqref{P_2} with $f \in L^{\infty}_{loc}(\mathbb{R}^{N})$, then given $x_0 \in \mathbb{R}^{N}$ we have $u \in \mathcal{W}\left(B_{2R_0}(x_0)\right)$, for any $0<R_0\leq 1$ and $|(-\Delta_{p})^{s}u + (-\Delta_q)^{s} u | \leq K = \|f\|_{L^{\infty}(B_{2R_0}(x_0))}$. Now, by applying Corollary \ref{CONT} results
\begin{equation*}
\vert u \vert_{C^{0,\alpha}(B_{R_{0}}(x_{0})} \leq C\left[(K_{0}R_{0}^{s(p-q)})^{\frac{1}{p-1}} + Q(u;x_{0};2R_0)\right] R_{0}^{-\alpha}.
\end{equation*}

Give $\Omega \subset \mathbb{R}^{N}$ compact, we consider a cover $\Omega \subset \displaystyle\cup_{i} B_{R_i}(x)$ with $x \in \Omega$ and $0 \leq R_i <1$.
We use the same arguments of the proof of Theorem $1.1$ in \cite{Ian}, for conclude that $u \in C^{\alpha}(\Omega)$.

To show that $u \in L^{\infty}(\mathbb{R}^N)$, we assume that $f \in L^{\frac{P^{*}_{s}}{p^{*}_{s}-1}}(\mathbb{R}^{N}) \cap L^{\theta}(\mathbb{R}^{N})$ and we use the Moser iteration process.

Let $M>0$ and $\beta > 1$, we set for simplicity $u_{M} = \min\{u,M \}$ and
\begin{align*}
g_{\beta,M}(t) = (min\{t,M \})^{\beta} =\left\{\begin{array}{rc}
t^{\beta},&\mbox{se} \ t\leq M,\\
M^{\beta}, &\mbox{se} \ t > M,
\end{array}\right.
\end{align*}
we can see that $g_{\beta,M}$ is continuous and has bounded derivative. Hence, 
\[
u_{M} = g_{\beta,M}(u) \in \mathcal{W} \cap L^{\infty}(\mathbb{R}^N).
\]
	
Then we insert the test function $\varphi= g_{\beta,M}(u)$ in the Definition \eqref{DEF} and the Holder inequality, we get
\begin{align}
\displaystyle\int_{\mathbb{R}^N}\!\!\int_{\mathbb{R}^N} &\left(\frac{ J_{p}(u(x) - u(y))}{\vert x-y\vert^{N+ sp }} + \frac{J_{q}(u(x) - u(y))}{\vert x-y\vert^{N+sq }} \right)(g_{\beta,M}(u(x)) - 
g_{\beta,M}(u(y))) \dd x\dd y \nonumber\\
&= \displaystyle\int_{\mathbb{R}^N} f(x)g_{\beta,M}(u(x)) \dd x = \displaystyle\int_{\mathbb{R}^N} f(x)u_M^{\beta}(x) \dd x \leq \Vert f \Vert_{\theta} \Vert u_M^{\beta} \Vert_{\theta'} \label{Des1}
\end{align}	
Setting
\begin{align}
G_{\beta,M}(t) = \displaystyle\int_{0}^{t} (g_{\beta,M}'(\tau))^{\frac{1}{m}} \dd\tau =  \frac{\beta^{\frac{1}{m}}m}{\beta + m -1}(\min\{t,M \})^{\frac{\beta + m-1}{m}} \label{Des2}
\end{align}
and using Lemma \ref{Le} for $m \in \{ p,q \}$, $a = u(x)$, and $b=u(y)$, results of \eqref{Des1} that
\begin{align*}\displaystyle\int_{\mathbb{R}^N}\displaystyle\int_{\mathbb{R}^N} \frac{ \vert G_{ \beta,M}(u(x)) - G_{ \beta,M}(u(y))\vert^{p}}{ \vert x-y \vert^{N+ sp}} \dd x\dd y \leq
  \Vert f  \Vert_{\theta} \Vert u_M^{\beta} \Vert_{\theta'} 
\end{align*}
By Sobolev inequality \eqref{S} we get
\begin{align*}
S\left(\displaystyle\int_{\mathbb{R}^N} \vert G_{\beta,M}(u(x)) \vert^{p_{s}^*} \dd x \right)^{\frac{p}{p_{s}^*}} &\leq \displaystyle
\int_{\mathbb{R}^N}\displaystyle\int_{\mathbb{R}^N} \frac{\vert G_{\beta,M}(u(x)) - G_{\beta,M}(u(y))\vert^{p}}{\vert x-y \vert^{N+ sp}} \dd x\dd y \\
&\leq \vert \vert f \Vert_{\theta} \Vert u_M^{\beta} \Vert_{\theta'}.
\end{align*}
From \eqref{Des2}
\[
S\left(\frac{\beta^{\frac{1}{p}}p}{\beta +p -1}\right)^{p} \left( \displaystyle\int_{\mathbb{R}^N} u_{M}^{\frac{(\beta + p - 1)p_{s}^*}{p}} \dd x \right)^{\frac{p}{p_{s}^*}} \leq \vert \vert f \vert \vert_{\theta}\Vert 
u_M^{\beta} \Vert_{\theta'}.
\]	
equivalently using $\beta >1$
\begin{align}\label{ITM}
\left( \displaystyle\int_{\mathbb{R}^N} u_{M}^{\frac{(\beta + p - 1)p_{s}^*}{p}} \dd x \right)^{\frac{p}{p_{s}^*}} \leq C_1\left(p_{s}^*\frac{\beta +p -1}{p}\right)^{p} \Vert u_M^{\beta} \Vert_{\theta'}
\end{align}
where $C_1= C_1(s,p,N, \Vert f \Vert_{\theta}) >0$.
By setting
\[
\beta_{n+1} = p_{s}^*\frac{\beta_n + p -1}{p\theta'}, \ \beta_0 = \frac{p_{s}^*}{\theta'} >1 \ \mbox{and} \ \sigma_{n} = \frac{\beta_n}{\beta_n + p -1} < 1.
\]
We can see that $\beta_n$ is increasing, and we obtain of \eqref{ITM} for $\beta = \beta_n>1$
\begin{equation}\label{ITM1}
\Vert u_M \Vert_{L^{\theta' \beta_{n+1}}(\mathbb{R}^N)} \leq C^{ \frac{1}{\beta_{n+1}}} \beta_{n+1}^{\frac{\beta_0}{\beta_{n+1}}} \Vert u_M \Vert_{L^{\theta'\beta_n}(\mathbb{R}^N)}^{\sigma_n}.
\end{equation}
Iterating this inequality and using that $\sigma_n < 1$, we get for any $n\geq 1$
\[
\Vert u_M \Vert_{L^{\theta' \beta_{n+1}}(\mathbb{R}^N)} \leq C^{\displaystyle\sum_{j=1}^{n+1}\frac{1}{\beta_j}}\left( \displaystyle\prod_{j=1}^{n+1}\beta_j^{\frac{1}{\beta_j}}\right)^{\beta_0} \Vert u_M \Vert_{L^{p_{s}^*}(\mathbb{R}^N)}^{\displaystyle\prod_{j=0}^{n}\sigma_j}
\]
Setting $\gamma = \frac{p_{s}^*}{\theta'p} = \frac{N(\theta-1)}{(N - sp)\theta}$, we have $\gamma > 1$ since that $\theta > \frac{N}{sp}$, and so
\[
\beta_n = \gamma^n \beta_0 + (p-1)\frac{\gamma^{n+1} - \gamma}{\gamma -1}.
\]
It yields
\[
\displaystyle\lim_{n \rightarrow \infty}\frac{\beta_n}{\gamma^n} = \beta_0 + (p-1)\displaystyle\lim_{n\rightarrow \infty}\frac{\gamma^{n-1}-\gamma}{\gamma^n(\gamma-1)} = \beta_0 + (p-1)\frac{\gamma}{\gamma-1} = \frac{p_{s}^*(p_{s}^* - \theta')}{\theta'(p_{s}^* - \theta'p)}.
\]
Therefore, using the limit comparison test, we conclude that
\[
\sum_{j=1}^{\infty} \frac{1}{\beta_j} <\infty,\ \ \mbox{and}\ \ \prod_{j=1}^{+\infty}\beta_{j}^{\frac{1}{\beta_j}} < \infty.
\]	
Moreover,
\[
\displaystyle\lim_{n\rightarrow \infty} \displaystyle\prod_{j=0}^{n} \sigma_{j} = \displaystyle\lim_{n\rightarrow \infty}\displaystyle\prod_{j=0}^{n} \gamma \frac{\beta_{j}}{\beta_{j+1}} = \displaystyle\lim_{n\rightarrow \infty}\gamma^{n+1} \frac{\beta_0}{\beta_{n+1}} = \displaystyle\lim_{n\rightarrow \infty}\frac{\gamma^{n+1}}{\beta_{n+1}}\frac{p_{s}^*}{\theta'} = \frac{p_{s}^* - \theta'p}{p_{s}^*- \theta'}.
\]
Using these estimates and taking $n \rightarrow +\infty$ in \eqref{ITM1} results
\begin{align}
\Vert u_M \Vert_{L^{\infty}(\mathbb{R}^N)} \leq C \Vert u_M \Vert_{L^{p_{s}^*}(\mathbb{R}^N)}^{\frac{p_{s}^* - \theta'p}{p_{s}^*- \theta'}} \leq C \Vert u \Vert_{L^{p_{s}^*}(\mathbb{R}^N)}^{\frac{p_{s}^* - \theta'p}{p_{s}^*- \theta'}} 
\end{align}
for some $C= C(s,p,N, \vert \vert f \vert \vert_{\theta}) > 0$. We now let $M \rightarrow +\infty$, which gives $u \in L^{\infty}(\mathbb{R}^N)$, and  we get
\[
\vert \vert u\vert \vert_{L^{\infty}(\mathbb{R}^N)} \leq C \vert \vert u \vert \vert_{L^{p_{s}^*}(\mathbb{R}^N)}^{\frac{p_{s}^* - \theta'p}{p_{s}^*- \theta'}}.
\].
\qed 

\begin{remark}
Note that, the condition $2 \leq q \leq p$ is necessary only to prove the continuity of $u$. To prove that $u$ is bounded we can assume that $1 \leq q \leq p$.
\end{remark}
\section{Existence of solution for an problem}\label{SEC1}
Let $\ 1< m < \frac{N}{s}$ and measurable $u : \mathbb{R}^N \rightarrow \mathbb{R}$ the quantity 
\[
\Vert u \Vert_{s,m} = \left(\displaystyle\int_{\mathbb{R}^N} \displaystyle\int_{\mathbb{R}^{N}} \frac{\vert u(x) - u(y) \vert^{m}}{\vert x-y\vert^{N+ s m}} \dd x\dd y \right)^{\frac{1}{m}}.
\]
defines a uniformly convex norm on the reflexive Banach space
\[
D^{s,m}(\mathbb{R}^{N}) = \{u \in L^{m_{s}^*}(\mathbb{R}^N) ; \Vert u \Vert_{s,m} < \infty \} \ \mbox{with} \ m_{s}^* = \frac{Nm}{N - sm}.
\]
	
Let $\mathcal{W}:= D^{s,p}(\mathbb{R}^{N}) \cap D^{s,q}(\mathbb{R}^{N})$, endowed with the norm
\begin{equation*}
\Vert u \Vert_{\mathcal{W}} := \Vert u \Vert_{s,p} + \Vert u \Vert_{s,q}.
\end{equation*}
	
To simplify the notation, we will use $S:=S_{s,p} $ the Sobolev constant.
The following lemma can be found in \cite[Lemma 4.8]{Kavian}
\begin{lemma}\label{Kavian}
Let $\Omega \subset \mathbb{R}^N$ , $1<p<\infty$ and $\{u_n \} \subset L^p(\Omega) $ be a bounded sequence converging to $u$ almost everywhere in $\Omega$. Then $u_n \rightharpoonup u$ in $L^{p}(\Omega)$.
\end{lemma}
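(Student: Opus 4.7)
The plan is to combine reflexivity of $L^p(\Omega)$ with Urysohn's subsequence principle: to show $u_n \rightharpoonup u$ weakly in $L^p(\Omega)$, it suffices to verify that every subsequence admits a further subsequence converging weakly to $u$. First, by Fatou's lemma, the a.e.\ convergence $u_n \to u$ and the uniform bound $\sup_n \|u_n\|_{L^p(\Omega)} \leq M$ give $\|u\|_{L^p(\Omega)} \leq M$, so $u \in L^p(\Omega)$ and the candidate weak limit is well defined.

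Next, given any subsequence, reflexivity of $L^p(\Omega)$ (valid because $1<p<\infty$), together with the Eberlein--\v{S}mulian theorem, yields a further subsequence, still denoted $\{u_{n_k}\}$, weakly converging to some $v \in L^p(\Omega)$.

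The principal step is to identify $v = u$ a.e. Fix $\varphi \in C_c(\Omega)$ and let $E$ be a bounded measurable set containing the support of $\varphi$, so $|E|<\infty$. Since $\{u_{n_k}\}$ is bounded in $L^p(E)$ with $p>1$, H\"older's inequality yields, for every measurable $A \subseteq E$,
\[
\int_{A} |u_{n_k}| \, dx \leq |A|^{1/p'} \|u_{n_k}\|_{L^p(E)} \leq M |A|^{1/p'},
\]
so the family is uniformly integrable on $E$. Combined with $u_{n_k} \to u$ a.e., Vitali's convergence theorem gives $u_{n_k} \to u$ in $L^1(E)$, whence
\[
\int_\Omega u_{n_k} \varphi \, dx \to \int_\Omega u \varphi \, dx.
\]
On the other hand, $\varphi \in L^{p'}(\Omega)$, so weak convergence gives $\int_\Omega u_{n_k}\varphi \, dx \to \int_\Omega v\varphi\, dx$. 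Equating the two limits yields $\int_\Omega (u-v)\varphi \, dx = 0$ for every $\varphi \in C_c(\Omega)$, and the density of $C_c(\Omega)$ in $L^{p'}(\Omega)$ forces $u=v$ a.e.

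Since every subsequence of $\{u_n\}$ admits a further subsequence converging weakly to the same limit $u$, the original sequence satisfies $u_n \rightharpoonup u$ in $L^p(\Omega)$. The only delicate point is the identification of the weak limit with the pointwise limit, where uniform integrability (provided gratis by the $L^p$-bound with $p>1$) bridges a.e.\ and weak convergence via Vitali; the rest is essentially bookkeeping with reflexivity and Urysohn's trick.
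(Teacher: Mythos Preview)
Your proof is correct. The paper does not give its own argument for this lemma; it simply cites \cite[Lemma 4.8]{Kavian}, so there is nothing to compare against beyond noting that your reflexivity plus Urysohn/Vitali identification is one of the standard routes to this classical fact.
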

	
Next we demonstrates a result related to compactness:
\begin{lemma}\label{CQTP}
Let $(u_n)_{n \in \mathbb{N}}$ be a bounded sequence in $\mathcal{W}$. Then there is $u \in \mathcal{W}$ such that less than subsequence $u_n(x) \rightarrow u(x)$ q.t.p. in $\mathbb{R}^N$. Moreover for $m \in \{p,q \}$ we have
\[
\displaystyle\lim_{n \rightarrow \infty} \Vert u_n - u \Vert_{s,m}^{m} = \displaystyle\lim_{n \rightarrow \infty } \left( \Vert u_n \Vert_{s,m}^{m} - \Vert u \Vert_{ s,m}^{m} \right).
\] 
\end{lemma}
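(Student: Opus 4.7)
The plan is to combine reflexivity of $\mathcal{W}$, the local compactness of the fractional Sobolev embedding to obtain pointwise convergence, and then the Brezis--Lieb lemma applied to the product measure space $\mathbb{R}^{2N}$ equipped with the Gagliardo kernel to deduce the asymptotic identity.

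First I would extract a weakly convergent subsequence. Since $D^{s,p}(\mathbb{R}^{N})$ and $D^{s,q}(\mathbb{R}^{N})$ are reflexive and $(u_n)$ is bounded in both norms, passing to a subsequence gives $u_n \rightharpoonup u$ in $D^{s,p}(\mathbb{R}^{N}) \cap D^{s,q}(\mathbb{R}^{N})$, and in particular weakly in $L^{p^{*}_{s}}(\mathbb{R}^{N})$. Next, for each $k \in \mathbb{N}$ the restrictions $u_n|_{B_k}$ lie in $W^{s,p}(B_k)$ (the $L^{p^{*}_{s}}(B_k) \hookrightarrow L^{p}(B_k)$ embedding handles the $L^p$-norm, and the Gagliardo seminorm on $B_k$ is dominated by the full one), and the bound is uniform in $n$. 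The compact embedding $W^{s,p}(B_k) \hookrightarrow\hookrightarrow L^{p}(B_k)$ then produces a subsequence converging strongly in $L^{p}(B_k)$, hence a.e.\ on $B_k$. A Cantor diagonal extraction gives a single subsequence (still denoted $(u_n)$) with $u_n(x) \to u(x)$ for a.e.\ $x \in \mathbb{R}^{N}$, the limit necessarily coinciding with the weak limit. That $u \in \mathcal{W}$ follows from Fatou's lemma applied to $|u|^{m^{*}_{s}}$ and, for $m \in \{p,q\}$, to the Gagliardo kernel $|u_n(x)-u_n(y)|^{m}/|x-y|^{N+sm}$.

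For the asymptotic identity, equip $\mathbb{R}^{2N}$ with the $\sigma$-finite measure $d\mu_m(x,y) := |x-y|^{-(N+sm)}\,dx\,dy$ and set
\[
F_n(x,y) := u_n(x) - u_n(y), \qquad F(x,y) := u(x) - u(y).
\]
From $u_n \to u$ a.e.\ on $\mathbb{R}^{N}$ we immediately get $F_n \to F$ a.e.\ on $\mathbb{R}^{2N}$, and the sequence $(F_n)$ is bounded in $L^{m}(\mathbb{R}^{2N}, d\mu_m)$ by hypothesis. The Brezis--Lieb lemma, which holds in any $\sigma$-finite measure space, then yields
\[
\lim_{n \to \infty} \bigl( \Vert F_n \Vert_{L^{m}(\mu_m)}^{m} - \Vert F_n - F \Vert_{L^{m}(\mu_m)}^{m} \bigr) = \Vert F \Vert_{L^{m}(\mu_m)}^{m},
\]
which, rewritten in Gagliardo notation, is exactly the claimed identity for $m \in \{p,q\}$.

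The only non-routine bookkeeping is at two points: justifying that functions in $D^{s,p}(\mathbb{R}^N)$, which are only guaranteed to lie in $L^{p^{*}_{s}}$ globally, indeed sit in $W^{s,p}(B_k)$ locally so the compact embedding applies; and checking that the hypotheses of Brezis--Lieb are satisfied in the non-Lebesgue, infinite measure $\mu_m$. Both are standard but deserve an explicit mention in the write-up.
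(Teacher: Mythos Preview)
Your proposal is correct and follows essentially the same route as the paper: reflexivity of $\mathcal{W}$ for the weak limit, the compact local embedding $W^{s,p}(B_k)\hookrightarrow L^p(B_k)$ (with a diagonal argument) for a.e.\ convergence, and then Brezis--Lieb on the product space $\mathbb{R}^{2N}$ for the asymptotic identity. The only cosmetic difference is that the paper encodes the Gagliardo kernel into the function $\mathcal{U}_n(x,y)=(u_n(x)-u_n(y))/|x-y|^{(N+sm)/m}$ viewed in $L^m(\mathbb{R}^{2N},dx\,dy)$, whereas you absorb the kernel into the measure $d\mu_m$; these are equivalent formulations.
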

\begin{proof} Let $(u_n)_{n \in \mathbb{N}}$ a sequence in $ \mathcal{W}$ such that,
\begin{align}
\Vert u_n \Vert_{\mathcal{W}} = \Vert u_n \Vert_{s,p} + \Vert u_n \Vert_{s,q} \leq C, \ \ \forall n \in \mathbb{N}.
\end{align}
		
It is easy to see that $\mathcal{W}$ is a uniformly convex Banach space, and hence $\mathcal{W}$ is reflexive Banach space, so there is $u \in \mathcal{W}$ such that $u_n \rightharpoonup u$ in  $\mathcal{W}$.
		
On the other hand, given $\Omega_0 \subset \mathbb{R}^N$ compact, using Holder inequality we have
\begin{align*}
\int_{\Omega_0} \vert u_n \vert^p \dd x + \displaystyle\int_{\Omega_0} \displaystyle\int_{\Omega_0} \frac{\vert u_n(x) - u_n(y) \vert^{p}}{\vert x-y\vert^{N+ sp}} \dd x\dd y &\leq \vert \Omega_0 \vert^{ \frac{N}{sp}} \| u_n \|^{p}_{p^{*}_{s}} + \Vert u_n \Vert_{s,p}^p \\
& \leq \left(\frac{\vert \Omega_0 \vert^{\frac{N}{sp}}}{S} +1\right) \Vert u_n \Vert_{s,p}^p \leq C.
\end{align*} 
		
Therefore $u_n \in W^{s,p}(\Omega_0)$ for each $n \in \mathbb{N}$ and all $\Omega_0$ compact. Since  the embedding $W^{s,p}(\Omega_0) \hookrightarrow L^{p}(\Omega_0)$ is compact, it follows that embedding $\mathcal{W} \hookrightarrow L^{p}_{loc}(\mathbb{R}^N)$ is compact. Hence less than subsequence, $u_n \rightarrow u$ in $L^{p}_{loc}(\mathbb{R}^N)$ and consequently
$u_n(x) \rightarrow u(x)$ q.t.p. in $ \mathbb{R}^N.$ 
		
For the second part of the lemma, let $m \in \{p,q\}$ and defined 
\[
\mathcal{U}_{n}(x,y) = \frac{u_n(x) - u_n(y)}{\vert x-y\vert^{\frac{N}{p} + s}} \in L^{m} \left(\mathbb{R}^N \times \mathbb{R}^N \right)
\]

By the first part obtain
\[ 
\mathcal{U}_{n}(x,y) \rightarrow \mathcal{U}(x,y) = \frac{u(x) - u(y)}{\vert x-y\vert^{\frac{N}{p} + s}}, \,\,\, \mbox{a.e. in} \,\,\, \mathbb{R}^N \times \mathbb{R}^N.
\]
		
Since $(u_n)$ is bounded in $\mathcal{W}$ follow that $\left(\mathcal{U}_{n}\right)_{n \in \mathbb{N}}$ is bounded in $L^{m}(\mathbb{R}^N \times \mathbb{R}^N)$, from the Lemma \ref{Kavian} results 
\[
\mathcal{U}_{n} \rightharpoonup \mathcal{U} \ \mbox{in} \ L^{m}(\mathbb{R}^N \times \mathbb{R}^N).
\]
applying the lemma of Brezis Lieb we complete the proof.
\qed
\end{proof} 

Let us introduce the following version of the mountain pass theorem (see \cite{Springer,Ambroseti,Servadei,Willen}).
\begin{lemma}\label{PM}
Let $X$ be a real Banach space and $\Phi \in C^1(X, \mathbb{R})$. Suppose that $\Phi(0) =0$ an that there exist $\beta, \rho > 0$ and $x_1 \in X\backslash \overline{B}_{\rho}(0)$ such  that
\begin{enumerate}
\item[($i$)] $\Phi(u) \geq \beta$ for all $u \in X$ with $\Vert u \Vert_{X} = \rho$;
\item[($ii$)] $\Phi(x_1) < \beta$.
\end{enumerate}

There exists a sequence $\{u_n\} \subset X$ satisfying 
\begin{equation*}
\Phi(u_n) \rightarrow c \,\,\,\, \mbox{and} \,\,\,\, \Phi'(u_n) \rightarrow 0,
\end{equation*}
where $c$ is the minimax level, defined by
\begin{equation*}
c:= \inf\left\lbrace \displaystyle\max_{t\geq0}\Phi(\gamma(t)) : \gamma \in C([0,1],\mathbb{R}), \gamma(0)=0 \,\,\,\, \mbox{and} \,\,\,\ \gamma(1) = x_1 \right\rbrace.
\end{equation*}
\end{lemma}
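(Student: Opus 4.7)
The plan is to prove this by a standard minimax/deformation argument, following the approach in \cite{Ambroseti} or via Ekeland's variational principle as in \cite{Willen}. First I would verify that the minimax value $c$ is well-defined and satisfies $c\geq\beta$. Set
\[
\Gamma = \{\gamma \in C([0,1],X) : \gamma(0)=0,\ \gamma(1)=x_1\},
\]
which is nonempty since the straight segment $t\mapsto tx_1$ belongs to $\Gamma$. Since $\|x_1\|_X>\rho$ (because $x_1\notin\overline{B}_\rho(0)$) and $\gamma(0)=0$, by continuity of $t\mapsto\|\gamma(t)\|_X$ every path $\gamma\in\Gamma$ must cross the sphere $\{\|u\|_X=\rho\}$; thus $\max_{t\in[0,1]}\Phi(\gamma(t))\geq\beta$ by hypothesis $(i)$, giving $c\geq\beta>0$. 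Hypothesis $(ii)$ together with $\Phi(0)=0$ also guarantees $c<+\infty$ by taking $\gamma(t)=tx_1$ and noticing $\max_{t\geq 0}\Phi(tx_1)$ is finite on this compact path.

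Next I would argue by contradiction: assume there is no sequence $(u_n)\subset X$ with $\Phi(u_n)\to c$ and $\Phi'(u_n)\to 0$. Then there exist $\varepsilon_0,\delta>0$ such that
\[
\|\Phi'(u)\|_{X^*}\geq \delta\quad\text{whenever}\quad |\Phi(u)-c|\leq 2\varepsilon_0.
\]
The key ingredient is the quantitative deformation lemma: for any $\varepsilon\in(0,\varepsilon_0)$ there exists $\eta\in C([0,1]\times X,X)$ such that $\eta(0,\cdot)=\mathrm{id}$, $\eta(t,u)=u$ if $|\Phi(u)-c|\geq 2\varepsilon_0$, and
\[
\eta(1,\{\Phi\leq c+\varepsilon\})\subset\{\Phi\leq c-\varepsilon\}.
\]
This is built from the pseudo-gradient vector field of $\Phi$ along the region $\{|\Phi-c|\leq 2\varepsilon_0\}$, as in the standard proof. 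Since $\Phi(0)=0<c-\varepsilon_0$ and $\Phi(x_1)<\beta\leq c-\varepsilon_0$ (after shrinking $\varepsilon_0$ if needed, using $(ii)$), the deformation $\eta(1,\cdot)$ fixes the endpoints $0$ and $x_1$.

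Then I would pick a near-optimal path $\gamma\in\Gamma$ with $\max_{t}\Phi(\gamma(t))\leq c+\varepsilon$ and deform it: $\tilde\gamma(t):=\eta(1,\gamma(t))$ still lies in $\Gamma$, but $\max_{t}\Phi(\tilde\gamma(t))\leq c-\varepsilon$, contradicting the definition of $c$. This yields the required Palais--Smale sequence.

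The main obstacle is the construction of the deformation $\eta$, which requires building a locally Lipschitz pseudo-gradient vector field for $\Phi$ outside the critical set and flowing along it; this is the technical heart of the classical mountain pass theorem and is where all the analysis lives. An alternative route, perhaps cleaner to write up here, is to apply Ekeland's variational principle directly to the continuous functional $\Psi(\gamma):=\max_{t\in[0,1]}\Phi(\gamma(t))$ on the complete metric space $(\Gamma,d_\infty)$: this produces a path $\gamma_n$ that is almost a minimizer and almost stationary, from which one extracts $t_n$ with $\Phi(\gamma_n(t_n))\to c$ and $\Phi'(\gamma_n(t_n))\to 0$, again avoiding the explicit construction but requiring a careful selection argument at the maxima of $\Phi\circ\gamma_n$. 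Either way, since the statement is standard, I would simply invoke one of the cited references for the deformation/Ekeland step and present the verification of $(i)$--$(ii)$ in detail.
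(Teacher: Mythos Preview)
Your sketch is correct and follows the standard deformation/Ekeland route, which is precisely what the cited references \cite{Springer,Ambroseti,Servadei,Willen} contain. The paper itself does not give a proof of this lemma at all; it simply states the result and refers to those sources, so your proposal in fact goes further than the paper does.
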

We are interested first in finding nontrivial weak solutions to the following problem
\begin{equation}\label{EQ1}
\left\{\begin{array}
[c]{clll}%
(- \Delta_{p})^{s}u + (- \Delta_{q})^{s}u  =  \vert u \vert^{p_{s}^* -2}u + \lambda g(x) \vert u\vert^{r-2}u \,\,\,
\text{in } \,\,\, \mathbb{R}^{N} \\
u(x)  \geq  0  \,\,\,\, x \in \mathbb{R}^{N}.
\end{array}
\right.  
\end{equation}
where $1<q\leq p$, $N > s p $, $\lambda > 0$ is a parameter. The function $g: \mathbb{R}^{N} \rightarrow \mathbb{R}$ satisfying the conditions:
\begin{enumerate}
\item[($g_1$)] $g$ is integrable and $g \in L^{t}( \mathbb{R}^N )$, with $t = \frac{p^{*}_{ s}}{p^{*}_{s} - r}$;
\item[($g_2$)] There exist an open set $\Omega_{g} \in \mathbb{R}^N$ and $\alpha_0 >0$ such that\\ $g(x) \geq \alpha_0 > 0$, for all $x \in \Omega_{g}$.
\end{enumerate}
	
\begin{definition}\label{defsol}
We say that $u \in \mathcal{W}$ is a weak solution of problem \eqref{EQ1} if
\begin{align*}
\int_{\mathbb{R}^N} \displaystyle\int_{\mathbb{R}^{N}} &\left( \frac{J_{p}(u(x) - u(y))}{\vert x-y\vert^{N+ sp}} + \frac{ J_{q}(u(x) - u(y))}{\vert x-y\vert^{N+ sq}}\right)(\varphi(x) - \varphi(y)) \dd x\dd y \\
&= \displaystyle\int_{\mathbb{R}^N} ( u^+ )^{p_s^* -2}u^+ \varphi \dd x + \lambda \displaystyle\int_{\mathbb{R}^N} g(u^+)^{r-2}u^+ \varphi \dd x, \ \ \ \mbox{for all} \ \varphi \in \mathcal{W}.
\end{align*}
\end{definition}
	
Observe that Definition \ref{defsol} is satisfied by critical points of the functional,
\begin{align}
I_{\lambda}(u) = \frac{1}{p} \Vert u\Vert_{s,p}^p + \frac{1}{q}\Vert u \Vert_{s,q}^{q} - \frac{1}{p_{s}^*} \displaystyle\int_{\mathbb{R}^N} (u^+)^{p_{s}^*} \dd x - \frac{\lambda}{r} \displaystyle\int_{\mathbb{R}^N} g(u^+)^r \dd x.\label{Funcional}
\end{align}
where $u^{\pm} = \max \{\pm u,0 \}$.
	
\begin{lemma}
Let $(g_1)$ hold. Then $I_{\lambda}$ is well defined, for all $\lambda > 0$, $I_{\lambda} \in C^1(\mathcal{W}, \mathbb{R})$ and for all $u,\varphi \in \mathcal{W}$ we have
\begin{align}
I_{\lambda}(u)\varphi = &\displaystyle\int_{\mathbb{R}^N} \displaystyle\int_{\mathbb{R}^{N}} \left( \frac{J_{p}(u(x) - u(y))}{\vert x-y\vert^{N+ sp}} + \frac{ J_{q}(u(x) - u(y))}{\vert x-y\vert^{N+ sq}}\right)(\varphi(x) - \varphi(y)) \dd x\dd y \nonumber \\
&= \displaystyle\int_{\mathbb{R}^N} ( u^+ )^{p_s^*-2}u^+ \varphi \dd x + \lambda \displaystyle\int_{\mathbb{R}^N} g(u^+)^{r-2}u^+ \varphi \dd x. \label{FUN}
\end{align}
\end{lemma}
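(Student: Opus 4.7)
The first two summands in $I_\lambda(u)$ are finite by the very definition of $\mathcal{W}=D^{s,p}\cap D^{s,q}$. For the critical piece, the Sobolev embedding \eqref{S} bounds $\int_{\mathbb{R}^N}(u^+)^{p_s^*}\,\dd x\leq \|u\|_{p_s^*}^{p_s^*}\leq S^{-p_s^*/p}\|u\|_{s,p}^{p_s^*}$. For the weighted term I would apply H\"older with the conjugate pair $(t_s,\,p_s^*/r)$, which is admissible precisely because $(g_1)$ fixes $t_s=p_s^*/(p_s^*-r)$, giving $\int g(u^+)^r\,\dd x\leq \|g\|_{t_s}\|u\|_{p_s^*}^r<\infty$.

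\textbf{G\^ateaux derivative.} For $u,\varphi\in\mathcal{W}$ I would differentiate $t\mapsto I_\lambda(u+t\varphi)$ at $t=0$ and pass the limit inside each integral by dominated convergence. For the $m$-Gagliardo piece ($m\in\{p,q\}$) the Mean Value Theorem identifies the difference quotient with $J_m(\xi_t(x,y))(\varphi(x)-\varphi(y))$ for some intermediate point $\xi_t(x,y)$; the elementary bound $(a+b)^{m-1}\leq C_m(a^{m-1}+b^{m-1})$ (valid for all $m>1$) then furnishes the $t$-uniform dominating kernel
\[
C_m\,\frac{(|u(x)-u(y)|^{m-1}+|\varphi(x)-\varphi(y)|^{m-1})\,|\varphi(x)-\varphi(y)|}{|x-y|^{N+sm}},
\]
integrable on $\mathbb{R}^{2N}$ by H\"older with bound $C_m(\|u\|_{s,m}^{m-1}\|\varphi\|_{s,m}+\|\varphi\|_{s,m}^m)$. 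The two nonlinear integrals are handled in the same spirit: the difference quotient of $(u^+)^{p_s^*}$ is dominated via Sobolev, and that of $g(u^+)^r$ via H\"older with $(g_1)$. Assembling the four pieces yields \eqref{FUN}.

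\textbf{Continuity of $I_\lambda'$.} Given $u_n\to u$ in $\mathcal{W}$, for the nonlinear integrals I would combine $u_n^+\to u^+$ in $L^{p_s^*}(\mathbb{R}^N)$ with continuity of the Nemytskii operators $v\mapsto(v^+)^{p_s^*-1}$ and $v\mapsto g(v^+)^{r-1}$ into $L^{(p_s^*)'}$ (the second exponent check uses $(g_1)$ together with H\"older), yielding uniform convergence of the corresponding linear functionals over $\|\varphi\|_{\mathcal{W}}\leq 1$. For each Gagliardo piece I would set
\[
\Phi_{m,n}(x,y):=\frac{J_m(u_n(x)-u_n(y))}{|x-y|^{(N+sm)/m'}}\in L^{m'}(\mathbb{R}^{2N}),\qquad m':=\tfrac{m}{m-1},
\]
paired in duality with $(\varphi(x)-\varphi(y))/|x-y|^{(N+sm)/m}\in L^m$, whose $L^m$-norm is $\|\varphi\|_{s,m}$. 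The continuity then reduces to the strong convergence $\Phi_{m,n}\to\Phi_m$ in $L^{m'}(\mathbb{R}^{2N})$.

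\textbf{Main obstacle.} The crux of the proof, and the only genuinely delicate step, is this strong $L^{m'}$-convergence of $\Phi_{m,n}$, i.e.\ the nonlinear continuity of the fractional $m$-Laplacian operator on $\mathcal{W}$. I would establish it by combining pointwise a.e.\ convergence along a subsequence (from Lemma \ref{CQTP}) with the norm convergence $\|\Phi_{m,n}\|_{m'}^{m'}=\|u_n\|_{s,m}^m\to\|u\|_{s,m}^m$, then invoking uniform convexity of $L^{m'}$ (cf.\ Lemma \ref{Kavian}) to upgrade weak to strong convergence; a standard subsequence argument then removes the extraction. This uniform-convexity route is robust for any $m>1$ and avoids separating the subquadratic and superquadratic regimes, which matters since $J_m$ is only H\"older continuous when $1<m<2$.
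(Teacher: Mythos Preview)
The paper does not actually prove this lemma: its entire proof reads ``The proof of this fact is well known. See Lemma 2 in \cite{Pucci}.'' Your proposal supplies a correct, self-contained argument where the paper gives only a citation, so there is no meaningful comparison of approaches to make.

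One remark on your ``main obstacle'': the strong $L^{m'}(\mathbb{R}^{2N})$-convergence of $\Phi_{m,n}$ can be obtained more directly than via uniform convexity. Setting
\[
V_n(x,y):=\frac{u_n(x)-u_n(y)}{|x-y|^{(N+sm)/m}}\in L^m(\mathbb{R}^{2N}),
\]
one checks pointwise that $\Phi_{m,n}=J_m(V_n)$. The hypothesis $u_n\to u$ in $D^{s,m}(\mathbb{R}^N)$ is precisely $V_n\to V$ strongly in $L^m(\mathbb{R}^{2N})$, and the Nemytskii operator $t\mapsto |t|^{m-2}t$ is continuous from $L^m$ to $L^{m'}$ (Krasnosel'ski\u{\i}'s theorem, since $|J_m(t)|^{m'}=|t|^m$). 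Hence $\Phi_{m,n}\to\Phi_m$ in $L^{m'}$ immediately, with no need to extract subsequences, identify a.e.\ limits, or invoke the Radon--Riesz property. Your route is correct as written, just slightly longer; note also that the reference to Lemma~\ref{Kavian} in your argument is really being used to pass from a.e.\ convergence plus boundedness to weak convergence, with uniform convexity (Radon--Riesz) then supplying the upgrade to strong convergence---two distinct steps rather than one.
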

\begin{proof} The proof of this fact is well known. See Lemma 2 in \cite{Pucci}.
\qed
\end{proof} 
	
It is standard to show that the functional $I_\lambda$ has the mountain pass structure on the space $\mathcal{W}$. Thus, for each $\lambda > 0$, the minimax level denoted by
\begin{equation}\label{CL}
\overline{c}_{\lambda} := \displaystyle\inf_{u \in \mathcal{W}\backslash \{0\}}\displaystyle\max_{t\geq0}I_{\lambda}(tu),
\end{equation}
is positive, and there exists a Palais-Smale (PS) sequence  $\{u_n\}\subset \mathcal{W}$ at the level $\overline{c}_{\lambda}$, that is
\begin{align}\label{ps}
I_\lambda(u_n)\rightarrow \overline{c}_{\lambda} \ \ \ \mbox{and}\ \ \ \ I'_\lambda(u_n)\rightarrow0.
\end{align}

\begin{lemma}\label{LIM}
Let $\{u_n\} \subset \mathcal{W}$ be a Palais-Smale sequence. Then $\{u_n\}_{n \in \mathbb{N}}$ is bounded in $\mathcal{W}$.
\end{lemma}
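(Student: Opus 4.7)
The plan is to use the standard Nehari--type trick: form the combination $r\,I_\lambda(u_n)-\langle I'_\lambda(u_n),u_n\rangle$ so that the (non-polynomially-homogeneous) $\lambda g$-term cancels exactly, producing positive coefficients on all remaining terms, and then absorb the Palais--Smale error via Young's inequality. The exponent assumptions from Theorem~\ref{EXISTENCE} give $1<q\le p<r<p_s^*$ in both cases, so $r/p-1>0$, $r/q-1>0$, and $1-r/p_s^*>0$.

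First I would compute, using the expression \eqref{Funcional} for $I_\lambda$ and \eqref{FUN} for $\langle I'_\lambda,\cdot\rangle$,
\begin{align*}
r\,I_\lambda(u_n)-\langle I'_\lambda(u_n),u_n\rangle
&=\Bigl(\tfrac{r}{p}-1\Bigr)\|u_n\|_{s,p}^p+\Bigl(\tfrac{r}{q}-1\Bigr)\|u_n\|_{s,q}^q\\
&\quad+\Bigl(1-\tfrac{r}{p_s^*}\Bigr)\int_{\mathbb{R}^N}(u_n^+)^{p_s^*}\,\dd x,
\end{align*}
where the $g$-integral vanishes because its coefficient is $\lambda(1-r/r)=0$. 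Denote the three positive constants by $c_1,c_2,c_3>0$.

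Next I would bound the left-hand side from above using the Palais--Smale hypothesis \eqref{ps}: since $I_\lambda(u_n)\to\overline{c}_\lambda$ and $\|I'_\lambda(u_n)\|_{\mathcal{W}^*}=o(1)$, we get
\[
r\,I_\lambda(u_n)-\langle I'_\lambda(u_n),u_n\rangle\le r|I_\lambda(u_n)|+\|I'_\lambda(u_n)\|_{\mathcal{W}^*}\|u_n\|_{\mathcal{W}}\le C+o(1)\,\|u_n\|_{\mathcal{W}}.
\]
Dropping the non-negative $p_s^*$-integral yields
\[
c_1\|u_n\|_{s,p}^p+c_2\|u_n\|_{s,q}^q\le C+o(1)\bigl(\|u_n\|_{s,p}+\|u_n\|_{s,q}\bigr).
\]

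Finally, since $p,q>1$, Young's inequality gives $o(1)\|u_n\|_{s,p}\le \tfrac{c_1}{2}\|u_n\|_{s,p}^p+C\,o(1)^{p/(p-1)}$ and similarly for the $q$-term. Absorbing these into the left-hand side produces
\[
\tfrac{c_1}{2}\|u_n\|_{s,p}^p+\tfrac{c_2}{2}\|u_n\|_{s,q}^q\le C+o(1),
\]
so both $\|u_n\|_{s,p}$ and $\|u_n\|_{s,q}$ are bounded, hence $\|u_n\|_{\mathcal{W}}$ is bounded. There is no real obstacle here: the only thing to verify carefully is that $r>\max\{p,q\}$ and $r<p_s^*$, both of which are explicit in the hypotheses of Theorem~\ref{EXISTENCE}, ensuring positivity of $c_1,c_2,c_3$ and the cleanest possible cancellation of the subcritical $g$-term.
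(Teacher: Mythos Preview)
Your proof is correct and is essentially the same as the paper's: the paper forms the combination $I_\lambda(u_n)-\tfrac{1}{r}\langle I'_\lambda(u_n),u_n\rangle$, which is precisely $1/r$ times your $r\,I_\lambda(u_n)-\langle I'_\lambda(u_n),u_n\rangle$, cancels the $g$-term, drops the nonnegative $p_s^*$-integral, and concludes boundedness from $(\tfrac{1}{p}-\tfrac{1}{r})(\|u_n\|_{s,p}^p+\|u_n\|_{s,q}^q)\le d_0+\|u_n\|_{\mathcal{W}}$. Your Young-inequality absorption step is more explicit than the paper's terse ``from where, we easily conclude,'' but the substance is identical.
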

\begin{proof} 
The argument is standard. We indicate the main step. Let $\{u_n\}_{n \in \mathbb{N}}$ such that,
\begin{equation*}
I_{\lambda}(u_n) \leq d_0 \,\, \text{and} \,\, I_{ \lambda}'(u_n) \rightarrow 0 \,\, \text{in} \,\, \mathcal{W}^*.
\end{equation*}	
Thus, for all $n$ large 
\begin{align*}
d_0 +  \Vert u_n \Vert_{\mathcal{W}} &\geq I_{\lambda}(u_n) - \frac{1}{r} I_{\lambda}'(u_n) 
 \geq \left( \frac{1}{p} -\frac{1}{r} \right) \left( \Vert u_n \Vert_{s,p}^p + \Vert u_n \Vert_{s,q}^q \right).
\end{align*}		
From where, we easily conclude that $\Vert u_n \Vert_{\mathcal{W}}$ is bounded.
\qed
\end{proof}

\begin{lemma}\label{C}
Assume that $1<q\leq p < r < p^{*}_{s}$, $(g_1)$ and $(g_2)$ holds. Then exist $\lambda^* >0$ such that $0< \overline{c}_{\lambda} < \frac{s}{N}S^{\frac{n}{sp}}$ for all $\lambda > \lambda^*$.
\end{lemma}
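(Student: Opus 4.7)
The plan is to exhibit a single test function $u_0 \in \mathcal{W} \setminus \{0\}$ whose fiber maximum $\max_{t \geq 0} I_\lambda(t u_0)$ drops below $\frac{s}{N} S^{N/(sp)}$ for all large $\lambda$, and then use the definition \eqref{CL} of $\overline{c}_\lambda$ as an infimum of such fiber maxima. The positivity $\overline{c}_\lambda > 0$ has already been asserted as a consequence of the mountain pass geometry just above the lemma, so only the upper bound requires work.

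First, using $(g_2)$ I would pick a nonnegative $u_0 \in C_c^\infty(\Omega_g)$ with $u_0 \not\equiv 0$. Such a $u_0$ belongs to $\mathcal{W}$, and since $g \geq \alpha_0 > 0$ on its support, the constant $D := \int_{\mathbb{R}^N} g\, u_0^r \, \dd x$ is strictly positive. Writing also $A := \|u_0\|_{s,p}^p$, $B := \|u_0\|_{s,q}^q$, $C := \int_{\mathbb{R}^N} u_0^{p_s^*} \, \dd x$, the fiber becomes
\[
\phi_\lambda(t) := I_\lambda(t u_0) = \frac{A}{p} t^p + \frac{B}{q} t^q - \frac{C}{p_s^*} t^{p_s^*} - \frac{\lambda D}{r} t^r, \qquad t \geq 0.
\]
Since $q \leq p < p_s^*$, this function is positive near $0^+$ and tends to $-\infty$ as $t \to \infty$, so its maximum $m_\lambda$ is attained at some positive $t_\lambda$ and $m_\lambda > 0$.

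The crux is to show that $m_\lambda \to 0$ as $\lambda \to \infty$. I would drop the nonpositive critical term to pass to the simpler majorant $\psi_\lambda(t) := \frac{A}{p} t^p + \frac{B}{q} t^q - \frac{\lambda D}{r} t^r \geq \phi_\lambda(t)$, whose unique positive critical point $\tau_\lambda$ satisfies $A \tau_\lambda^{p-1} + B \tau_\lambda^{q-1} = \lambda D \tau_\lambda^{r-1}$. Because $r > p \geq q$, a short contradiction argument (dividing by $\tau_\lambda^{q-1}$ and observing that the right-hand side would diverge if $\tau_\lambda$ remained bounded below, while it would vanish too fast if $\tau_\lambda$ were to blow up) forces $\tau_\lambda \to 0^+$ as $\lambda \to \infty$. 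Multiplying the critical-point identity by $\tau_\lambda$ and substituting back yields
\[
\psi_\lambda(\tau_\lambda) = \Bigl(\tfrac{1}{p} - \tfrac{1}{r}\Bigr) A \tau_\lambda^p + \Bigl(\tfrac{1}{q} - \tfrac{1}{r}\Bigr) B \tau_\lambda^q \longrightarrow 0,
\]
so $m_\lambda \leq \psi_\lambda(\tau_\lambda) \to 0$. Choosing $\lambda^* > 0$ so large that $m_\lambda < \frac{s}{N} S^{N/(sp)}$ whenever $\lambda > \lambda^*$, together with \eqref{CL}, concludes the argument.

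The main obstacle I foresee is conceptual rather than technical: the entire argument rests on having $D > 0$ for the chosen $u_0$, which is precisely what $(g_2)$ secures by producing an open set on which $g$ is uniformly bounded below. The remaining fiber analysis is routine, but invokes the strict inequality $r > p$ twice --- once to ensure $\phi_\lambda \to -\infty$ so that the maximum exists, and once to ensure $\tau_\lambda \to 0$ so that the maximum collapses to zero as $\lambda \to \infty$.
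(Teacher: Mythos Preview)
Your proof is correct and follows essentially the same route as the paper: fix a nonnegative test function supported in $\Omega_g$ so that $\int g u_0^r>0$, and show that the fiber maximum $\max_{t\geq 0} I_\lambda(tu_0)$ tends to $0$ as $\lambda\to\infty$. The only cosmetic difference is that the paper works with the critical point $t_\lambda$ of the full fiber $\phi_\lambda$ and argues $t_\lambda\to 0$ directly from the rearranged first-order condition $\lambda\int g u_0^r = \|u_0\|_{s,p}^p\,t_\lambda^{p-r} + \|u_0\|_{s,q}^q\,t_\lambda^{q-r} - t_\lambda^{p_s^*-r}$, whereas you first discard the nonpositive critical term and analyze the simpler majorant $\psi_\lambda$; both yield the same conclusion by the same mechanism.
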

\begin{proof}
By the above comments we have $\overline{c}_{\lambda} >0$.
		
We recall that $\Omega_{g} = \{x \in \mathbb{R}^N ; g(x)\geq \alpha_{0} > 0 \}$.\\
Let $u_0 \in \mathcal{W} \backslash \{0 \}$ with support in $\Omega_{g}$ such that $u_0 \geq 0$ and $\Vert u_0 \Vert_{p_s^*} = 1$. For each $t>0$ we have
\[
I_{ \lambda}(tu_0) = \frac{t^p}{p} \Vert u_0 \Vert_{s,p}^p + \frac{t^q}{q} \Vert u_0 \Vert_{s,q}^q - \frac{\lambda t^r}{r} \displaystyle\int_{\mathbb{R}^N}gu_0^r \dd x - \frac{t^{p_{s}^*}}{p_{s}^*}, \ t>0.
\]
thus we can see that $I_{\lambda}(tu_{0}) \rightarrow -\infty$ as $t \rightarrow \infty$ and that $I_{\lambda}(tu_{0}) \rightarrow 0$ as $t \rightarrow 0^+ $. These facts imply the existence of a $t_{\lambda} >0$ such that 
\[
\displaystyle\max_{t \geq 0} I_{\lambda}(tu_0) = I_{\lambda}(t_{\lambda} u_0)
\]		
Hence
\begin{align*}
0 &= \frac{d}{dt}\left[I_{\lambda}(tu_0) \right]_{t = t_{\lambda}}\\
&= t_{\lambda}^{p-1}\Vert u_0 \Vert_{s,p}^p + t_{\lambda}^{q-1}\Vert u_0 \Vert_{ s,q}^q - \lambda t_{\lambda}^{r-1}\displaystyle\int_{\mathbb{R}^N}gu_0^r \dd x - t_{\lambda}^{p_{s}^* -1}
\end{align*}
we get
\begin{equation*}
0 < \lambda \displaystyle\int_{\mathbb{R}^N}gu_0^r \dd x = \frac{\vert \vert u_0 \vert \vert_{s,p}^p}{t_{\lambda}^{r-p}} + \frac{\vert \vert u_0 \vert \vert_{s,q}^q}{t_{\lambda}^{r-q}} - t_{\lambda}^{p_{s}^* - r}, \ \mbox{for all} \ \lambda>0.
\end{equation*}
So, $t_{ \lambda} \rightarrow 0$ as $\lambda \rightarrow \infty$. Since $I_{\lambda}(t_{\lambda}u_0) \rightarrow 0$ as $t_{\lambda} \rightarrow 0^+$, there exists $\lambda^* > 0$ such that
\begin{equation*}
\displaystyle\max_{t \geq 0}I_{\lambda}(tu_0) = I_{\lambda}(t_{\lambda}u_0) < \frac{s}{N}S^{\frac{N}{sp}}. \ \mbox{for all} \ \lambda > \lambda^*.
\end{equation*}
The conclusion follows.
\end{proof}
Now assume that
\begin{align}
N > p^2s \ \mbox{and} \ 1< q <\frac{N(p-1)}{N-s} < p \leq \max\{p,p_s^* - \frac{q}{p-1} \} < r < p_s^*. \label{Cond1}
\end{align}	
Let $U$ be a radially symmetric and decreasing minimizer for the Sobolev constant $S=S_{s,p}$. It is know from \cite{Brasco} that there exist constants $c_1,c_2 >0$, and $\theta >1$ such that
\begin{align}
&\frac{c_1}{\vert x \vert^{\frac{N-sp}{p-1}}} \leq U(|x|) \leq \frac{c_1}{\vert x \vert^{\frac{N-sp}{p-1}}}, \ \forall |x| \geq 1, \ \label{Est1} \\
&\frac{U(\theta r)}{U(r)} \leq \frac{1}{2}, \ \forall r \geq 1. \label{Est2}
\end{align}	
By multiplying the function $U$ by an appropriate constant, we can assume that $U$ satisfies the following:
\begin{align*}
&(i) \ (-\Delta_p)^s U = U^{p_s^* -1} \ \mbox{in} \ \mathbb{R}^{N} \\
&(ii) \ \vert \vert U \vert \vert_{s,p}^p = \left\Vert U \right\Vert_{p_s^*}^{p_s^*} = S^{\frac{N}{sp}}.
\end{align*}	
For any $\delta >0$, the function
\begin{equation}
U_{\delta}(x) = \delta^{-\frac{N-sp}{p}}U(|x|/\delta)
\end{equation}
is also a minimizer for $S$, satisfying $(i)$ and $(ii)$. We may assume that $0 \in \Omega_{g}$. For $\delta,R>0$ consider the radially symmetric non-increasing function $\overline{u}_{\delta,R}:[0,\infty) \rightarrow \mathbb{R}$ by
\begin{align*}
\overline{u}_{\delta,R} =\left\{\begin{array}{rc}
U_{\delta}(r),&\mbox{se} \ r\leq R,\\
0, &\mbox{se} \ r\geq \theta R.
\end{array}\right.
\end{align*}
Therefore, we have the following estimates from \cite{Mos}.
\begin{lemma}
For any $R>0$, exist $C= C(N,p,s)>0$ such that for any $\delta \leq \frac{R}{2}$
\begin{align}
&\vert\vert \overline{u}_{\delta,R} \vert\vert_{s,p}^p \leq S^{\frac{N}{sp}} + C(\frac{\delta}{R})^{\frac{n_sp}{p-1}}, \ \label{A1} \\
&\Vert \overline{u}_{\delta,R} \Vert_{p}^p \geq \left\{\begin{array}{rc}
\frac{1}{C} \delta^{sp} log(R/\delta),&\mbox{se} \ N=sp^2,\\
\frac{1}{C}\delta^{sp}, &\mbox{se} \ N > sp^2. 
\end{array}\right.\\
&\Vert \overline{u}_{\delta,R} \Vert_{p_s^*}^{p_s^*} \geq S^{\frac{N}{sp}} - C(\frac{\delta}{R})^{N/(p-1)}. \label{A2}
\end{align}
\end{lemma}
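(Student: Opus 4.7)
The plan is to prove the three estimates separately, exploiting the scaling identity $U_\delta(x) = \delta^{-(N-sp)/p}U(x/\delta)$ together with the pointwise decay \eqref{Est1} and the doubling property \eqref{Est2}. The change of variable $y = x/\delta$ converts every integral involving $\overline{u}_{\delta,R}$ into a corresponding integral involving $U$ over $B_{R/\delta}$ (with $R/\delta \geq 2$ by hypothesis), reducing everything to explicit radial estimates on the profile $U$.

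The $L^{p_s^*}$ lower bound is essentially free. Since $\overline{u}_{\delta,R} = U_\delta$ on $B_R$,
\[
\|\overline{u}_{\delta,R}\|_{p_s^*}^{p_s^*} \geq \int_{B_R} U_\delta^{p_s^*}\,\dd x = S^{N/(sp)} - \int_{B_R^c} U_\delta^{p_s^*}\,\dd x,
\]
and the tail rescales to $\int_{B_{R/\delta}^c} U^{p_s^*}\,\dd y$, which is bounded by $C(\delta/R)^{N/(p-1)}$ since $U^{p_s^*}(r)\lesssim r^{-Np/(p-1)}$ for $r\geq 1$ by \eqref{Est1}. For the $L^p$ estimate, rescale to get $\|\overline{u}_{\delta,R}\|_p^p \geq \delta^{sp}\int_{B_{R/\delta}} U^p\,\dd y$. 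The radial integrand behaves like $r^{N-1-p(N-sp)/(p-1)}$ for $r\geq 1$, whose exponent equals $-1$ precisely when $N = sp^2$ (yielding logarithmic divergence $\log(R/\delta)$) and is strictly less than $-1$ when $N > sp^2$ (in which case $U\in L^p(B_1)$ already contributes a positive lower bound independent of $R/\delta$).

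The Gagliardo seminorm estimate is the main obstacle, since it is nonlocal. Setting $v := U_\delta - \overline{u}_{\delta,R}\geq 0$, which vanishes on $B_R$, equals $U_\delta$ on $B_{\theta R}^c$, and satisfies $0\leq v\leq U_\delta$ pointwise, I would expand
\[
\|\overline{u}_{\delta,R}\|_{s,p}^p - S^{N/(sp)} = \iint_{\mathbb{R}^N\times\mathbb{R}^N} \frac{|(U_\delta-v)(x)-(U_\delta-v)(y)|^p - |U_\delta(x)-U_\delta(y)|^p}{|x-y|^{N+sp}}\,\dd x\,\dd y
\]
and split the domain by symmetrized cases according to whether each of $x,y$ lies in $B_R$, in the annulus $B_{\theta R}\setminus B_R$, or in $B_{\theta R}^c$. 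The integrand vanishes on $B_R\times B_R$. On each remaining piece I would use the elementary bound $\bigl||a-c|^p - |a-b|^p\bigr| \leq C\bigl(|a-b|^{p-1}+|b-c|^{p-1}\bigr)|b-c|$ together with \eqref{Est1} (to estimate $U_\delta$ off $B_R$) and the doubling \eqref{Est2} (to control $v$ on the intermediate annulus). After rescaling $x = \delta\tilde x$, $y = \delta\tilde y$, each piece reduces to a dimensionless radial integral outside $B_{R/\delta}$, and a careful radial computation yields the claimed bound $C(\delta/R)^{(N-sp)/(p-1)}$. The delicate accounting lies in the cross-interactions between $B_R$ and the tail $B_{\theta R}^c$, where the exact polynomial decay rate in \eqref{Est1} is precisely what is needed to close the estimate.
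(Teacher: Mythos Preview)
The paper does not supply its own proof of this lemma; the sentence preceding it reads ``Therefore, we have the following estimates from \cite{Mos}'', so the result is simply quoted from Mosconi--Perera--Squassina--Yang. Your outline is correct and follows essentially the same route taken in that reference: rescale to reduce everything to statements about $U$ on $B_{R/\delta}$, dispatch the $L^{p_s^*}$ and $L^p$ estimates by direct radial integration of the decay \eqref{Est1}, and control the Gagliardo correction $[\overline{u}_{\delta,R}]_{s,p}^p-[U_\delta]_{s,p}^p$ by writing $\overline{u}_{\delta,R}=U_\delta-v$ with $v$ supported off $B_R$ and splitting the double integral according to the positions of $x$ and $y$.

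One point you leave implicit is worth making explicit, since it is where the proof can go wrong if the interpolation on the annulus is chosen carelessly: the pointwise sandwich $0\le v\le U_\delta$ is not by itself enough to bound the seminorm contribution from $B_{\theta R}\setminus B_R$. What is actually used in \cite{Mos} is that the interpolation on the annulus is \emph{affine in $U_\delta$}, namely $\overline{u}_{\delta,R}(r)=\alpha\,U_\delta(r)+\beta$ with $\alpha=U_\delta(R)/\bigl(U_\delta(R)-U_\delta(\theta R)\bigr)$, and the doubling \eqref{Est2} forces $\alpha\in[1,2]$. This gives $|v(x)-v(y)|\le C\,|U_\delta(x)-U_\delta(y)|$ whenever both points lie in the annulus, and bounded jump constants at the two interfaces, so that all the region-by-region integrals are controlled by integrals of $|U_\delta(x)-U_\delta(y)|^p/|x-y|^{N+sp}$ over tail regions. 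With that in hand your algebraic inequality and the radial computations close exactly as in \cite{Mos}, yielding the $(\delta/R)^{(N-sp)/(p-1)}$ bound.
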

	
Let $\varepsilon > 0$. Take $R>0$ fixed such that $B_{\theta R}(0) \subset \Omega_{g}$ and let us define the function $u_{\varepsilon,R}: [0,\infty) \rightarrow \mathbb{R}$ by
\[
u_{\varepsilon,R}(r) = \varepsilon^{-\frac{N-sp}{p^2}}\overline{u}_{\delta,R}(r), \ \mbox{with} \ \delta = \varepsilon^{\frac{p-1}{p}}.
\]
Therefore applying \eqref{A1}-\eqref{A2} yields
\begin{align}
\Vert u_{\varepsilon,R} \Vert_{s,p}^{p} \leq S^{\frac{N}{sp}} \varepsilon^{-\frac{N-sp}{p}} + O(1).\ \label{A3}
\end{align}
The demonstrations of the following lemma can be found in \cite{IND}.
\begin{lemma}\label{LEIND}
Let $u_{\varepsilon,R}$ be defined as above. Then the following estimates hold for $t\geq 1$,
\begin{align}
&\Vert u_{\varepsilon,R} \Vert_{p_s^*}^p = S^{\frac{N-sp}{sp}}\varepsilon^{-\frac{N-sp}{p}} + O(1). \ \label{A4}\\
&\Vert u_{\varepsilon,R} \Vert_{t}^t \geq 
\left\{\begin{array}{rc}\label{A5}
k\varepsilon^{\frac{N(p-1) - t(N-sp)}{p}} + O(1),&\mbox{se} \ t>\frac{N(p-1)}{N-sp},\\
k|ln\varepsilon| + O(1), &\mbox{se} \ t=\frac{N(p-1)}{N-sp}.\\
O(1), \ &\mbox{se} \ t<\frac{N(p-1)}{N-sp} 
\end{array}\right. \nonumber  \\
\end{align}
and
\begin{align}
\Vert u_{\varepsilon,R}  \Vert_{s,t}^t \leq O(1), \ \mbox{for} \ 1\leq t < \frac{N(p-1)}{N-s} \ \label{A6}
\end{align}
where $k$ is a positive constant independent of $\varepsilon$.
\end{lemma}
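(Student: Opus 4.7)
The plan is to reduce every estimate to a computation on the truncated function $\overline{u}_{\delta,R}$, exploiting the fact that $u_{\varepsilon,R} = \varepsilon^{-(N-sp)/p^2}\overline{u}_{\delta,R}$ is just a rescaling. Homogeneity immediately gives
\[
\|u_{\varepsilon,R}\|_{p_s^*}^{p_s^*} = \varepsilon^{-N/p}\|\overline{u}_{\delta,R}\|_{p_s^*}^{p_s^*},\qquad \|u_{\varepsilon,R}\|_t^t = \varepsilon^{-(N-sp)t/p^2}\|\overline{u}_{\delta,R}\|_t^t,
\]
so the main task is to track how the $L^{p_s^*}$, $L^t$, and Gagliardo norms of $\overline{u}_{\delta,R}$ depend on $\delta = \varepsilon^{(p-1)/p}$ and $R$.

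For the first estimate I would start from the lower bound $\|\overline{u}_{\delta,R}\|_{p_s^*}^{p_s^*}\geq S^{N/(sp)}-C(\delta/R)^{N/(p-1)}$ supplied by the previous lemma, together with the trivial upper bound $\|\overline{u}_{\delta,R}\|_{p_s^*}^{p_s^*}\leq \|U\|_{p_s^*}^{p_s^*}=S^{N/(sp)}$. Raising to power $p/p_s^*$ and using $(a-b)^{p/p_s^*}= a^{p/p_s^*}+O(a^{p/p_s^*-1}b)$, one checks that $p/p_s^*\cdot N/(sp)=(N-sp)/(sp)$, which produces the desired leading term; the remainder is $O(1)$ because $(\delta/R)^{N/(p-1)}$ is offset by the prefactor $\varepsilon^{-(N-sp)/p}$ (direct bookkeeping with $\delta=\varepsilon^{(p-1)/p}$).

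For the $L^t$ estimate I would pass to polar coordinates and change variables $y=r/\delta$ to obtain
\[
\|\overline{u}_{\delta,R}\|_t^t\sim \delta^{N-(N-sp)t/p}\int_{0}^{R/\delta}U(y)^t y^{N-1}\,dy,
\]
and then use the two-sided asymptotics \eqref{Est1} for $U$ to evaluate the radial integral. The borderline $N-1-t(N-sp)/(p-1)=-1$ corresponds precisely to $t=N(p-1)/(N-sp)$; above it the integral converges and gives the pure power $\varepsilon^{[N(p-1)-t(N-sp)]/p}$, at it produces a $\log(R/\delta)\sim|\ln\varepsilon|$, and below it the integral blows up like $(R/\delta)^{N-t(N-sp)/(p-1)}$ and the $\varepsilon$-powers cancel, leaving a term of order $R^{N-t(N-sp)/(p-1)}=O(1)$. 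The bookkeeping with $\delta=\varepsilon^{(p-1)/p}$ reproduces the three cases in \eqref{A5} exactly.

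The main obstacle is the Gagliardo estimate \eqref{A6}, where homogeneity alone gives only a scaling identity and one must actually control a non-local double integral. My plan would be to split $\mathbb{R}^N\times\mathbb{R}^N$ into the regions (i) both points in $B_{\theta R}$, (ii) both points outside $B_{\theta R}$ (where the integrand vanishes), and (iii) one inside/one outside. On region (i) one rescales by $\delta$ and uses the $(s,t)$-integrability of $U$ on $\mathbb{R}^N$, whose finiteness follows from the decay $U(y)\lesssim y^{-(N-sp)/(p-1)}$ and the constraint $t<N(p-1)/(N-s)$ (which forces $N+st<t(N-sp)/(p-1)$ in the tail); on region (iii) one separates the contribution where $|x-y|\gtrsim R$ and bounds it by comparing with $\|\overline{u}_{\delta,R}\|_t^t\cdot R^{-st}$. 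Collecting the pieces and using once more the relation $\delta=\varepsilon^{(p-1)/p}$, every $\varepsilon$-power cancels and one is left with a bound of order $O(1)$, which is precisely \eqref{A6}. This is the step I expect to consume most of the work, since it requires careful case distinctions in the double integral and a sharp use of the hypothesis $t<N(p-1)/(N-s)$ to guarantee the integrability of $[U]_{s,t}^t$.
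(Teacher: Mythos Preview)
The paper does not actually prove this lemma: immediately before the statement it writes ``The demonstrations of the following lemma can be found in \cite{IND}'' and gives no argument of its own. So there is no in-paper proof to compare against; the authors simply defer to Bhakta--Mukherjee.

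Your outline is the standard route and is essentially what one finds in the cited reference: exploit the homogeneity $u_{\varepsilon,R}=\varepsilon^{-(N-sp)/p^2}\overline u_{\delta,R}$ with $\delta=\varepsilon^{(p-1)/p}$, reduce the $L^{p_s^*}$ estimate to the two-sided bounds on $\|\overline u_{\delta,R}\|_{p_s^*}^{p_s^*}$ from the preceding lemma, compute $\|\overline u_{\delta,R}\|_t^t$ in polar coordinates using the pointwise asymptotics \eqref{Est1} to obtain the trichotomy at $t=N(p-1)/(N-sp)$, and handle the Gagliardo seminorm by splitting the double integral and rescaling. Your scaling bookkeeping is correct: for the Gagliardo piece the net $\varepsilon$-exponent after rescaling is $[(p-1)N - t(N-s)]/p$, which is positive precisely when $t<N(p-1)/(N-s)$, so the bound is indeed $O(1)$ under the stated hypothesis. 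One small caution: the claim that $[U]_{s,t}^t<\infty$ is not a one-line consequence of the tail decay alone (your parenthetical inequality ``$N+st<t(N-sp)/(p-1)$'' is not quite the right condition); in the references this is handled either by a more careful near/far decomposition of the double integral or by invoking the known $C^1$ regularity and decay of $\nabla U$, so be prepared to supply that detail rather than treating it as automatic.
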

	
Now let us show that $\overline{c}_{\lambda} < \frac{s}{N}S^{\frac{n}{sp}}$ ,  for all $\lambda >0$.
\begin{lemma}\label{lema2}
Assume $(g_1)$ and $(g_2)$ and \eqref{Cond1} holds. Then, for any $\lambda >0$ the level $\overline{c}_{\lambda} \in (0,\frac{s}{N} S^{\frac{n}{sp}})$, where $\overline{c}_{\lambda}$ was defined in \eqref{CL}.
\end{lemma}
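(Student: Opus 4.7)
The plan is to use the extremal function $u_{\varepsilon,R}$ from Lemma \ref{LEIND} as a test function in the minimax definition \eqref{CL}. The positivity $\overline{c}_{\lambda} > 0$ is immediate from the mountain-pass geometry of $I_\lambda$: for $\|u\|_{\mathcal{W}} = \rho$ with $\rho$ small, the Sobolev inequality combined with $(g_1)$ yields $I_\lambda(u) \geq \beta > 0$. So the content of the lemma is the strict upper bound $\overline{c}_\lambda < \frac{s}{N}S^{N/(sp)}$, valid for every $\lambda>0$ (not only for large $\lambda$ as in Lemma \ref{C}).

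By $(g_2)$, fix $R > 0$ so small that $B_{\theta R}(0) \subset \Omega_g$. Since $u_{\varepsilon,R}$ is supported in $B_{\theta R}(0)$, we have $g \geq \alpha_0$ on its support, hence $\int g (u_{\varepsilon,R})^r \, \dd x \geq \alpha_0 \|u_{\varepsilon,R}\|_r^r$. Consequently, for every $t \geq 0$,
\begin{equation*}
I_\lambda(tu_{\varepsilon,R}) \leq h_1(t) + h_2(t),
\end{equation*}
where $h_1(t) = \tfrac{t^p}{p}\|u_{\varepsilon,R}\|_{s,p}^p - \tfrac{t^{p^*_s}}{p^*_s}\|u_{\varepsilon,R}\|_{p^*_s}^{p^*_s}$ and $h_2(t) = \tfrac{t^q}{q}\|u_{\varepsilon,R}\|_{s,q}^q - \tfrac{\lambda \alpha_0 t^r}{r}\|u_{\varepsilon,R}\|_r^r$.

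Explicit maximization of $h_1$ in $t \geq 0$ is a routine calculus exercise; inserting the asymptotics \eqref{A1} and \eqref{A4} yields
\begin{equation*}
\max_{t \geq 0} h_1(t) \leq \frac{s}{N}\,S^{N/(sp)} + C_1\,\varepsilon^\alpha
\end{equation*}
for some $\alpha>0$ depending only on $N,s,p$. Let $t_\varepsilon>0$ be the maximizer of $\phi(t):=I_\lambda(tu_{\varepsilon,R})$. From $\phi'(t_\varepsilon)=0$ and the norm asymptotics of Lemma \ref{LEIND} I would show that $t_\varepsilon \in [c_1,c_2] \subset (0,\infty)$ uniformly in small $\varepsilon$. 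At this $t_\varepsilon$: the hypothesis $q < N(p-1)/(N-s)$ together with \eqref{A6} gives $\|u_{\varepsilon,R}\|_{s,q}^q = O(1)$, while combining $r > p^*_s - q/(p-1)$ with $q < N(p-1)/(N-s)$ produces the intermediate inequality $r > N(p-1)/(N-sp)$, so \eqref{A5} provides
\begin{equation*}
\|u_{\varepsilon,R}\|_r^r \geq k\,\varepsilon^{-(r(N-sp)-N(p-1))/p}.
\end{equation*}
The exponent is negative, so the $r$-norm diverges as $\varepsilon\to 0^+$, and hence $h_2(t_\varepsilon) \leq C - c\lambda\,\varepsilon^{-\beta}$ with $\beta>0$ and $c>0$ independent of $\lambda$. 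For each fixed $\lambda>0$, choosing $\varepsilon$ small enough makes the divergent negative contribution dominate both the bounded $O(1)$ from the $q$-norm and the positive remainder $C_1\varepsilon^\alpha$, so that $I_\lambda(t_\varepsilon u_{\varepsilon,R}) < \tfrac{s}{N}S^{N/(sp)}$, and therefore $\overline{c}_\lambda < \tfrac{s}{N}S^{N/(sp)}$.

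The main technical obstacle will be the uniform two-sided control on $t_\varepsilon$; a uniform lower bound $t_\varepsilon \geq c_1 > 0$ is essential, since otherwise the $r$-term evaluates to something harmless and spoils the argument. This is where the hypothesis $N > p^2 s$ enters: it ensures that the critical Sobolev block $h_1$ dominates the behavior of $\phi$ as $\varepsilon\to 0^+$, so that $t_\varepsilon$ remains close to the maximizer of $h_1$, which stays in a bounded interval away from zero in the limit.
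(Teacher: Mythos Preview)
Your overall strategy---testing with $u_{\varepsilon,R}$ and estimating $I_\lambda(tu_{\varepsilon,R})$ via the splitting $h_1+h_2$---is the standard route (as in \cite{IND}, to which the paper defers). However, the claim you yourself flag as the ``main technical obstacle'' cannot be established: the maximizer $t_\varepsilon$ is \emph{not} bounded away from zero. From \eqref{A3} and \eqref{A4} one has $\|u_{\varepsilon,R}\|_{s,p}^p\sim \varepsilon^{-(N-sp)/p}$ while $\|u_{\varepsilon,R}\|_{p^*_s}^{p^*_s}\sim \varepsilon^{-N/p}$, and balancing these two dominant terms in $\phi'(t_\varepsilon)=0$ forces $t_\varepsilon\sim \varepsilon^{(N-sp)/p^2}\to 0$. (Indeed, if $t_\varepsilon\geq c_1>0$ held, the $p^*_s$-term would drive $\phi(t_\varepsilon)\to -\infty$, contradicting $\phi(t_\varepsilon)=\max_t\phi(t)\geq 0$.) Consequently your estimate $h_2(t_\varepsilon)\leq C - c\lambda\varepsilon^{-\beta}$ is also false: at the true scale of $t_\varepsilon$ one computes $t_\varepsilon^r\|u_{\varepsilon,R}\|_r^r\sim \varepsilon^{(p-1)[Np-r(N-sp)]/p^2}\to 0^+$, so nothing diverges.

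The correct argument is a comparison of \emph{rates of decay}, not a divergence argument. At $t_\varepsilon\sim \varepsilon^{(N-sp)/p^2}$ the three correction terms in $I_\lambda(t_\varepsilon u_{\varepsilon,R}) - \tfrac{s}{N}S^{N/(sp)}$ scale as
\[
O\big(\varepsilon^{(N-sp)/p}\big),\qquad
t_\varepsilon^q\|u_{\varepsilon,R}\|_{s,q}^q=O\big(\varepsilon^{q(N-sp)/p^2}\big),\qquad
-\lambda\,t_\varepsilon^r\|u_{\varepsilon,R}\|_r^r\sim -\lambda\,\varepsilon^{(p-1)[Np-r(N-sp)]/p^2}.
\]
The condition $r>p^*_s-\tfrac{q}{p-1}$ in \eqref{Cond1} is exactly the inequality making the (negative) $r$-exponent smaller than the $q$-exponent, and the hypothesis $N>p^2s$ (equivalent to $p>p^*_s-\tfrac{p}{p-1}$, hence $r>p>p^*_s-\tfrac{p}{p-1}$) is exactly what makes the $r$-exponent smaller than $(N-sp)/p$. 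Thus the negative $r$-term is the slowest to vanish and dominates for small $\varepsilon$, yielding the strict inequality for every $\lambda>0$. This exponent comparison---not a lower bound on $t_\varepsilon$---is where $N>p^2s$ actually enters.
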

\begin{proof}
The proof is very similar to that presented in [Lemma 5.4 in \cite{IND}], and hence we will omit it.
\qed
\end{proof}
	
\noindent{\bf Proof of Theorem \ref{EXISTENCE}} We know that the functional $I_{\lambda}$ has the structure of the mountain pass theorem, and from Lemma \ref{LIM} its (PS) sequence is bounded. Let $(u_n) \subset \mathcal{W}$ be a (PS) sequence satisfying 
\begin{equation*}
I_{\lambda}(u_n) \rightarrow c_{\lambda} \,\,\text{and} \,\, I_{\lambda}'(u_n) \rightarrow 0,
\end{equation*}
where $c_\lambda$ is the minimax level of the mountain pass theorem associated with $I_{\lambda}$. Adapting the arguments \cite{Rab,Willen} we concludes that $c_{\lambda} \leq \overline{c}_{\lambda}$. Since that $(u_n)$ is bounded in $\mathcal{W}$, then up to a subsequence one has $u_n \rightharpoonup u$ in $\mathcal{W}$. By Lemma \ref{CQTP} we have $u_n \rightarrow u$ a.e. in $\mathbb{R}^N$.
	
To prove case ($i$) in Theorem \ref{EXISTENCE}, we will use Lemma \ref{C} to get $\lambda^* > 0$ such that $0 < c_{\lambda} \leq \overline{c}_{\lambda}< \frac{s}{N}S^{\frac{N}{sp}}$ for all $\lambda > \lambda^*$. For case ($ii$) we use the Lemma \ref{lema2} to get $0 < c_{\lambda} \leq \overline{c}_{\lambda}< \frac{s}{N}S^{\frac{N}{sp}}$ for all $\lambda >0$.
	
\textbf{Claim:}\label{Afir} Let $u_{n}^{-} = \max \{ -u_n,0 \}$. Then $u_n^{-} \rightarrow 0$ in $\mathcal{W}$, in particular $u_n^{+} \rightarrow u$ a.e. in $\mathbb{R}^{N}$. 
	
Indeed, since $I_{\lambda}'(u_n)u_n^- \rightarrow 0$ then 
\begin{align*}
&\displaystyle\int_{\mathbb{R}^N} \displaystyle\int_{\mathbb{R}^{N}} \left( \frac{J_{p}(u_n(x) - u_n(y))}{\vert x-y\vert^{N+ sp}} + \frac{J_{q}(u_n(x) - u_n(y))}{\vert x-y\vert^{N+sq}}\right)(u_n^{-}(x) - u_{n}^{-}(y)) \dd x\dd y \\
&= \displaystyle\int_{\mathbb{R}^N} ( u_{n}^+ )^{p_{s}^{*}-2}u_{n}^+ u_{n}^{-} \dd x + \lambda \displaystyle\int_{\mathbb{R}^N} g(u_{n}^+)^{r-2}u_{n}^+ u_{n}^{-} \dd x + o(1) = 0
\end{align*}	
Using the elementary inequality, for $m=\{p,q\}$
\begin{align*}
\vert v^{-}(x) - v^{-}(y) \vert^{m} \leq J_{m}(v(x) - v(y))(v^{-}(x) - v^{-}(y)), \ \mbox{for all} \ x,y \in \mathbb{R}^{N}.
\end{align*}
Follows that $u_n^{-} \rightarrow 0$ in $\mathcal{W}$. The claim follows. 
	
Applying the Lemma \ref{Kavian} for $(u_{n}^{+})$ which is bounded in $L^{p_{s}^*}(\mathbb{R}^N)$ results
\begin{align}
&(u_n^+)^{p_{s}^* -1} \rightharpoonup u^{p_{s}^* -1} \ \mbox{in} \ L^{\frac{p_{s}^*}{p_s^* -1}}(\mathbb{R}^N).\label{f1}\\
&(u_n^+)^{r -1} \rightharpoonup u^{r-1} \ \mbox{in} \ L^{\frac{p_{s}^*}{r-1}}(\mathbb{R}^N) \label{f2}
\end{align}	
Let $m \in \{p,q\}$ and denote by
\begin{equation*}
\mathcal{U}_{n}(x,y) = \frac{ \vert u_n(x) - u_n(y) \vert^{m-2}(u_n(x) - u_n(y))}{ \vert x-y \vert^{(N+sm)/m'}} 
\end{equation*}
Since $u_n \rightarrow u$ a.e. in $\mathbb{R}^{N}$ we have
\begin{equation*}
\mathcal{U}_{n}(x,y) \longrightarrow \mathcal{U}(x,y):=  \frac{ \vert u(x) - u(y) \vert^{m-2}(u(x) - u(y))}{ \vert x-y \vert^{( N+ sm)/m'}} \ \mbox{a.e. in } \ \mathbb{R}^{N}.
\end{equation*}
Moreover,
\begin{equation*}
\displaystyle\int_{\mathbb{R}^N}\displaystyle\int_{\mathbb{R}^N} \vert \mathcal{U}_n(x,y) \vert^{m'}\dd x\dd y  \leq \Vert u_n \Vert_{s,m}^m.
\end{equation*}
Then $\left(\mathcal{U}_n\right)$ is bounded in $L^{m'}(\mathbb{R}^{2N})$ for $m \in \{p,q \}$. By Lemma \ref{Kavian} yields,
\begin{align} 
\mathcal{U}_n \rightharpoonup \mathcal{U} \ \mbox{in} \ L^{m'}(\mathbb{R}^{2N}).\label{f3}
\end{align}
Now, for all $\varphi \in \mathcal{W}$, from \eqref{f3} result that
\begin{align}
\displaystyle\int_{ \mathbb{R}^N} \displaystyle\int_{ \mathbb{R}^{N}} \frac{\mathcal{U}_{n}(x,y)(\varphi(x) - \varphi(y))}{ \vert x-y\vert^{(N + sm)/m}} \dd x\dd y \longrightarrow \displaystyle\int_{ \mathbb{R}^N} \displaystyle\int_{\mathbb{R}^{N}} \frac{\mathcal{U}(x,y)(\varphi(x) - \varphi(y))}{\vert x-y\vert^{(N+ sm)/m}} \dd x\dd y. \label{f4}
\end{align}
From \eqref{f1}, \eqref{f2}, and \eqref{f4} results that $I_{\lambda}'(u_n)\varphi \rightarrow I_{\lambda}'(u)\varphi$, for all $\varphi \in \mathcal{W}$ and so $u$ is solution 
(weak) of \eqref{EQ1}. We know that $u\geq 0$. It remains to verify that $u \neq 0$. Let,
\[
\displaystyle\lim_{n\rightarrow \infty}\Vert u_n \Vert_{s,p}^p =:a \geq 0 \ \mbox{and} \ \displaystyle\lim_{n\rightarrow \infty}\Vert u_n \Vert_{s,q}^q =:b\geq 0
\]
and suppose that $u\equiv 0$. Since $I_{\lambda}'(u_n)u_n \rightarrow 0$, we also have
\begin{equation*}
\Vert u_n \Vert_{s,p}^p + \Vert u_n \Vert_{s,q}^q = \lambda \displaystyle\int_{\mathbb{R}^{N}} g (u_n^+)^r \dd x + \displaystyle\int_{\mathbb{R}^{N}} (u_n^+)^{p_{s}^*} \dd x + o(1)
\end{equation*}	
Using the condition $(g_1)$ and the convergence weak $(u_n^+)^{r} \rightharpoonup u^r$ in $L^{\frac{p_{ s}^*}{p_{s}^* -r}}(\mathbb{R}^N)$ we get 
\[
\lambda \displaystyle\int_{\mathbb{R}^{N}} g (u_n^+)^r \dd x \rightarrow 0 .
\]
Thus,
\begin{equation*}
\Vert u_n \Vert_{s,p}^p = a + o(1), \ \  \Vert u_n \Vert_{s,q}^q = b + o(1),  \,\, \text{and} \,\, \Vert u_n \Vert_{p_{s}^*}^{p_{s}^*} = a + b + o(1)
\end{equation*}	
By taking into account that $I_{\lambda}(u_n) \rightarrow c_{\lambda}$, we have 
\begin{align}
\frac{a}{p} + \frac{b}{q} - \frac{a + b}{p_{s}^*} = c_{\lambda} >0 \label{K}
\end{align}
Hence 
\begin{align}
c_{\lambda} &= a \left( \frac{1}{p} - \frac{N- sp}{Np} \right) + b\left( \frac{1}{q} - \frac{1}{p_{s}^*}\right) \\
& \geq a\frac{s}{N}. \label{F}
\end{align}
The equality \eqref{K} shows that $a + b \neq 0$, by definition of $S$ follow that 
\begin{align*}
S(a + b)^{ \frac{p}{p_{s}^*}} \leq a \Rightarrow a>0.
\end{align*}
Thus
\begin{align*}
Sa^{\frac{p}{p_{s}^*}} \leq S(a + b)^{ \frac{p}{p_{s}^*}} \leq a \Rightarrow a \geq S^{\frac{N}{s p}}
\end{align*}	
Then by \eqref{F} we have
\begin{align*}
c_{\lambda}\frac{N}{s} \geq a  \geq S^{\frac{N}{sp}} 
\end{align*}
which is a contradiction, because $c_{ \lambda} < \frac{s}{N}S^{\frac{N}{sp}}.$ This concludes our result. 
\qed

\section{An application}\label{SEC2}
In this section we will apply the regularity results proved in Section \ref{SEC} to show that, if $u \in \mathcal{W}$ satisfies \eqref{EQ1}, then $u \in L^{\infty}(\mathbb{R}^{N}) \cap C^{\alpha}_{loc}(\mathbb{R}^{N})$.

\noindent{\bf Proof of Theorem \ref{APLIC}} Due to the Theorem \ref{BOUNDED}, it is enough to show that $u \in L^{\theta(p_{s}^*-1)}(\mathbb{R}^N)$ for some $\theta > \frac{N}{sp}$. In fact, if this is true, then since $ u \in L^{p_{s}^*}(\mathbb{R}^N)$ and  $g \in L^{t}(\mathbb{R}^N) \cap L^{\infty}(\mathbb{R}^N)$, where $t>0$ is give in $(g_1)$. Using the H\"{o}lder's inequality for $\gamma = \frac{p_{s}^*-1}{r-1} > 1$ and $\gamma'= \frac{p_{s}^*-1}{p_{s}^*-r}$, we obtain
\begin{align*}
\displaystyle\int_{\mathbb{R}^N} |g|^{\theta}u^{\theta(r-1)} \dd x \leq  \Vert g \Vert_{L^{\infty}(\mathbb{R}^N)}^{\frac{\theta(p_{s}^*-1)-p_{s}^*}{p_{s}^*-1}} \Vert g \Vert^{\frac{p^{*}_{s}}{p^{*}_{s}-r}}_{L^{t}(\mathbb{R}^{N})} \left(\displaystyle\int_{\mathbb{R}^N} u^{\theta (p_{s}^*-1)} \dd x \right)^{\frac{r-1}{p_{s}^*-1}} < \infty,
\end{align*}
for $\theta > \frac{N}{sp} > \frac{Np}{Np - N+ sp}$, which implies that $\frac{\theta(p_{s}^*-1)-p_{s}^*}{p_{s}^*-1} >0$. Therefore $f = \lambda gu^{r-1} + u^{p_{s}^* -1} \in L^{\theta}(\mathbb{R}^N)$ with $\theta > \frac{N}{sp}$, which jointly Theorem \ref{BOUNDED} give us $u \in L^{\infty}(\mathbb{R}^N)$.
	
Let us show that $u \in L^{\theta(p_{s}^*-1)}(\mathbb{R}^N)$ for some $\theta > \frac{N}{sp}$. Let $M>0$ and $\beta>1$, and denote as before $u_M= \min\{u,M \}$. Define $h_{\beta,M}(t)= t (\min\{t,M\})^{\beta-1}$. So
\begin{align*}
h_{\beta,M}(t)=\left\{\begin{array}{rc}
t^{\beta},&\mbox{se} \ t\leq M,\\
tM^{\beta-1}, &\mbox{se} \ t\geq M.
\end{array}\right.
\end{align*}
We have that $h_{\beta,M}$ is increasing, continues and has bounded derivative. Hence if $u \in \mathcal{W}$, then $h_{\beta,M}(u) \in \mathcal{W}$. Using the test function $\varphi= h_{\beta,M}(u)$ in equation \eqref{EQ1} we get
\begin{align}
\displaystyle\int_{\mathbb{R}^N}\displaystyle\int_{\mathbb{R}^N} \!\!&\!\left(\frac{J_{p}(u(x) - u(y))}{\vert x-y\vert^{N+ sp }} + \frac{J_{q}(u(x) - u(y))}{\vert x-y\vert^{N+ sq }} \right)\left( h_{\beta,M}(u(x)) - h_{\beta,M}(u(y)) \right) \dd x\dd y \nonumber\\
&= \lambda\displaystyle\int_{\mathbb{R}^N} gu^{r-1}h_{\beta,M}(u) \dd x + \displaystyle\int_{\mathbb{R}^N}u^{p_{s}^*-1}h_{\beta,M}(u)\dd x \label{De1}\\
&= \lambda \displaystyle\int_{\mathbb{R}^N} gu^{r}u_M^{\beta -1} \dd x +   \displaystyle\int_{\mathbb{R}^N} u^{p_{s}^*} u_M^{\beta -1} \dd x =: J_1 + J_2 \nonumber
\end{align}
where 
\begin{align*}
J_{1} &:= \lambda \displaystyle\int_{\mathbb{R}^N} gu^{r}u_M^{\beta -1} \dd x \\
J_{2} &:= \displaystyle\int_{\mathbb{R}^N} u^{p_{s}^*} u_M^{\beta -1} \dd x
\end{align*}
The term $J_2$ was estimated in \cite{BP} by
\begin{align}
J_2 \leq K_0^{\beta -1}\displaystyle\int_{\mathbb{R}^N} u^{p_{s}^*} \dd x + \left(\displaystyle\int_{\{u\geq K_0 \}} u^{p_{s}^*} \dd x\right)^{\frac{p_{s}^*-p}{p_{s}^*}} \left(\displaystyle\int_{\mathbb{R}^N} u^{p_{s}^*} u_M^{(\beta -1)\frac{p_{s}^*}{p}}\dd x\right)^{\frac{p}{p_{s}^*}},
\end{align}	
where $K_0 >1$ is a given constant. To estimate $J_1$, since $u_M \leq u$ we get
\begin{align*}
\lambda \displaystyle\int_{\{u<K_0\}}gu^ru_M^{\beta -1}\dd x \leq \lambda K_0^{\beta-1} \Vert g\Vert_{L^{t}(\mathbb{R}^N)}\left(\displaystyle\int_{\mathbb{R}^N}u^{p_{s}^*}\dd x \right)^{\frac{r}{p_{s}^*}}.
\end{align*}
On the other hand since $K_0 >1$ and $r< p_{s}^*$, using that $g \in L^{\infty}(\mathbb{R}^N)$ and Holder inequality
\begin{align*}
\lambda \displaystyle\int_{\{u\geq K_0\}}\!\!gu^ru_M^{\beta -1}\dd x &\leq \lambda \displaystyle\int_{\{u\geq K_0\}}\!\!gu^{p_{s}^*}u_M^{\beta -1}\dd x \leq \lambda \Vert g \Vert_{L^\infty(\mathbb{R}^N)} \displaystyle\int_{\{u \geq K_0\}}\!\!u^{p_{s}^*}u_M^{\beta -1}\dd x \\
&\leq C \left(\displaystyle\int_{\{u\geq K_0\}} u^{p_{s}^*}\dd x \right)^{\frac{p_{s}^*-p}{p_{s}^*}} \left(\displaystyle\int_{\mathbb{R}^N}u^{p_{s}^*}u_M^{(\beta-1)\frac{p_{s}^*}{p}} \dd x \right)^{\frac{p}{p_{s}^*}}.
\end{align*}
Then
\begin{align*}
J_1 \leq C K_0^{\beta-1} \vert \vert u \vert \vert_{L^{p_{s}^*}(\mathbb{R}^N)}^r + C \left(\displaystyle\int_{\{u\geq K_0\}} u^{p_{s}^*}\dd x \right)^{\frac{p_{s}^*-p}{p_{s}^*}}\!\! \left(\displaystyle\int_{\mathbb{R}^N}u^{p_{s}^*}u_M^{(\beta-1)\frac{p_{s}^*}{p}} \dd x \right)^{\frac{p}{p_{s}^*}}.
\end{align*}
Let 
\begin{align}
G_{\beta,M}(t) = \displaystyle\int_{0}^{t} (h_{\beta,M}'(\tau))^{\frac{1}{p}} \dd\tau \geq \frac{p}{\beta + p -1}t(\min\{t,M\})^\frac{\beta -1}{p}\label{Est}.
\end{align}
By Sobolev inequality \eqref{S}, and Lemma \ref{Le} we can see that
\begin{align*}
S \left(\displaystyle\int_{\mathbb{R}^N}\vert G_{s,M}(u(x)) \vert^{p_{s}^*} \dd x \right)^{\frac{p}{p_{s}^*}} &\leq \displaystyle\int_{\mathbb{R}^N} \displaystyle\int_{\mathbb{R}^N} \frac{\vert G_{\beta,M}(u(x)) - G_{\beta,M}(u(y)) \vert^p }{\vert x-y\vert^{N+ sp}} \dd x\dd y \\
&\leq J_1 + J_2 
\end{align*}
Consequently we have
\begin{align*}
S \left(\displaystyle\int_{\mathbb{R}^N}\vert G_{s,M}(u(x)) \vert^{p_{s}^*} \dd x \right)^{\frac{p}{p_{s}^*}}&\leq C_1K_0^{\beta -1} \left( \vert \vert u \vert \vert_{L^{p_{s}^*}(\mathbb{R}^N)}^r + \vert \vert u \vert \vert_{L^{p_{s}^*}(\mathbb{R}^N)}^{p_{s}^*}\right) \\
+ C_2 &\left(\displaystyle\int_{\{u\geq K_0\}} u^{p_{s}^*}\dd x \right)^{\frac{p_{s}^*-p}{p_{s}^*}}\!\! \left(\displaystyle\int_{\mathbb{R}^N}u^{p_{s}^*}u_M^{(\beta-1)\frac{p_{s}^*}{p}} \dd x \right)^{\frac{p}{p_{s}^*}}.
\end{align*}
From \eqref{Est}, and the above inequality, we get
\begin{align}\label{De3}
S\left(\frac{p}{p+\beta -1}\right)^p\!\! &\left(\displaystyle\int_{\mathbb{R}^N}\!\! u^{p_{s}^*} u_M^{(\beta-1)\frac{p_{s}^*}{p}} \dd x \right)^{\frac{p}{p_{s}^*}}\!\! \leq C_1K_0^{\beta -1}\!\! \left( \vert \vert u \vert \vert_{L^{p_{s}^*}(\mathbb{R}^N)}^r\!\! + \vert \vert u \vert \vert_{L^{p_{s}^*}(\mathbb{R}^N)}^{p_{s}^*}\right) \\
&+ C_2 \left(\displaystyle\int_{\{u\geq K_0\}} u^{p_{s}^*}\dd x \right)^{\frac{p_{s}^*-p}{p_{s}^*}} \left(\displaystyle\int_{\mathbb{R}^N}u^{p_{s}^*}u_M^{(\beta-1)\frac{p_{s}^*}{p}} \dd x \right)^{\frac{p}{p_{s}^*}}.\nonumber
\end{align}	
Fixing $\theta > \frac{N}{s p}$, and choosing $\beta > 1$ such that
\[
(\beta - 1)\frac{p_{s}^*}{p} + p_{s}^* = \theta(p_{s}^*-1) \ \mbox{i.e.} \  \beta = p\theta \frac{(p_{s}^*-1)}{p_{s}^*} - (p-1).
\]
Now, choose $K_0 = K_0(\beta,u) > 0$ such that
\[
\left(\displaystyle\int_{\{ u \geq K_0 \}}u^{p_{s}^*} \dd x \right)^{\frac{p_{s}^*-p}{p_{s}^*}} \leq \frac{S}{2}\left(\frac{p}{\beta +p -1}\right)^{p}
\]
Hence from \eqref{De3} we get
\[
\left(\displaystyle\int_{\mathbb{R}^N}u_M^{\theta(p_{s}^*-1)} \dd x \right)^{\frac{p}{p_{s}^*}} \leq C\left(\frac{p + \beta -1}{p} \right)^{p}K_0^{\beta -1} \left(\vert \vert u \vert \vert_{L^{p_{s}^*}(\mathbb{R}^N)}^r + \vert \vert u \vert \vert_{L^{p_{s}^*}(\mathbb{R}^N)}^{p_{s}^*}\right).
\]
As a result $u \in L^{\theta(p_{s}^*-1)}(\mathbb{R}^N)$. Thus $u \in L^{\infty}(\mathbb{R}^{N})$ and since $g \in L^{\infty}(\mathbb{R}^{N})$ follow that $f = \vert u \vert^{p^{*}_{s} - 2}u + \lambda g\vert u \vert^{r-2}u \in L^{\infty}(\mathbb{R}^{N})$. Therefore, by Theorem \ref{BOUNDED} results $u \in C^{\alpha}_{loc}(\mathbb{R}^{N})$.
\qed

\end{document}